\theoremstyle{plain}
\newtheorem{thm}{Theorem}[]
\newtheorem{cor}[thm]{Corollary}
\newtheorem{lem}[thm]{Lemma}
\newtheorem{prop}[thm]{Proposition}
\theoremstyle{definition}
\newcommand{\Z}{\mathbb{Z}}
\renewcommand{\P}{\mathbb{P}}
\newcommand{\E}{\mathbb{E}}
\newcommand{\R}{\mathbb{R}}
\newcommand{\F}{\mathcal{F}}
\newcommand{\ind}{\mathbbm{1}}
\newcommand{\eps}{\varepsilon}
\newcommand{\bp}{\begin{proof}}
\newcommand{\ep}{\end{proof}}
\newcommand{\Ecal}{\mathcal{E}}
\newcommand{\I}{\mathcal{I}}
\renewcommand{\d}{\mathop{}\!\mathrm{d}}
\def\bal#1\eal{\begin{align*}#1\end{align*}}
\newcommand{\N}{\mathbb{N}}
\newcommand{\odd}{E^{\text{odd}}_n(t)}
\newcommand{\even}{E^{\text{even}}_n(t)}
\author{Martin Prigent\thanks{University of Bath, Department of Mathematical Sciences, Bath BA2 7AY, UK. \texttt{martinprigent@outlook.com}}\hspace{1.5mm} and Matthew I.~Roberts\thanks{University of Bath, Department of Mathematical Sciences, Bath BA2 7AY, UK. \texttt{mattiroberts@gmail.com}}}
\title{Noise sensitivity and exceptional times of transience for a simple symmetric random walk in one dimension}
\begin{document}
\maketitle

\begin{abstract}
We define a dynamical simple symmetric random walk in one dimension, and show that there almost surely exist exceptional times at which the walk tends to infinity. This is in contrast to the usual dynamical simple symmetric random walk in one dimension, for which such exceptional times are known not to exist. In fact we show that the set of exceptional times has Hausdorff dimension $1/2$ almost surely, and give bounds on the rate at which the walk diverges at such times.

We also show noise sensitivity of the event that our random walk is positive after $n$ steps. In fact this event is maximally noise sensitive, in the sense that it is quantitatively noise sensitive for any sequence $\eps_n$ such that $n\eps_n\to\infty$. This is again in contrast to the usual random walk, for which the corresponding event is known to be noise stable.
\end{abstract}

\section{Introduction and results}
Consider two simple symmetric random walks in one dimension. The first, at each step independently, jumps upwards with probability $1/2$ or downwards with probability $1/2$. The second begins facing upwards and, at each step independently, decides to keep moving the same way with probability $1/2$ or switches direction with probability $1/2$.

We call the first of these two random walks the \emph{compass} random walk, as it has an in-built sense of direction, and the second the \emph{switch} random walk, as it only decides whether or not to switch directions. Of course these two random walks have exactly the same distribution---they are simple symmetric random walks---although, as we will see when we define them rigorously, they are different functions of the underlying randomness. This means that when we talk about noise sensitivity or dynamical sensitivity of the two walks, they may (and do) have very different properties.

We now define carefully the objects of interest. Let $X_1,X_2,\ldots$ be independent random variables satisfying
\[\P(X_i = 1) = \P(X_i = -1) = 1/2\]
for each $i\in\N$. Define, for each $n\ge 0$,
\[Y_n = \sum_{j=1}^n X_j\]
and
\[Z_n = \sum_{k=1}^n \prod_{j=1}^k X_j\]
where we take the empty sum to be zero, so $Y_0=Z_0=0$. We call $Y = (Y_n,\, n\ge 0)$ the compass random walk, and $Z = (Z_n,\, n\ge 0)$ the switch random walk. We can think of $Y = Y(X)$ and $Z=Z(X)$ as functions of the sequence of random variables $X=(X_1,X_2,\ldots)$. It is easy to see that, although they are different functions, the two walks $Y$ and $Z$ have the same distribution.

We now introduce dynamical versions of these random walks. For each $j\ge 1$, let $(N_j(t), t\ge0)$ be an independent Poisson process of rate $1$, and for each $i\ge 0$, let $X_j^i$ be an independent random variable with $\P(X_j^i = 1) = \P(X_j^i = -1) = 1/2$. Then define
\[X_j(t) = X_j^i \,\,\text{ whenever }\,\, N_j(t) = i.\]
In words, $X_j(t)$ has the same distribution as $X_j$ and rerandomises itself at the times of the Poisson process $N_j(t)$. Write $Y(t) = Y(X(t))$ and $Z(t) = Z(X(t))$, or more explicitly
\[Y_n(t) = \sum_{j=1}^n X_j(t) \,\,\,\,\,\,\text{ and }\,\,\,\,\,\, Z_n(t) = \sum_{k=1}^n \prod_{j=1}^k X_j(t)\]
for each $n\ge 0$.

For each fixed $t\ge 0$, the sequences $Y(t) = (Y_0(t), Y_1(t),\ldots)$ and $Z(t) = (Z_0(t), Z_1(t),\ldots)$ are simple symmetric random walks and therefore recurrent, in that $Y_n(t)=0$ for infinitely many values of $n$ almost surely, and similarly for $Z_n(t)$. Benjamini, H\"aggstr\"om, Peres and Steif \cite[Corollary 1.10]{benjamini_et_al:dynamic_sensitivity} showed that recurrence for $Y$ is \emph{dynamically stable} in that
\[\P(\forall t\ge0,\,\, Y_n(t) = 0 \text{ for infinitely many values of } n) = 1.\]
Our main result is that, in contrast, recurrence for $Z$ is dynamically sensitive. Define
\[\Ecal = \{t\in [0,1] : Z_n(t) \to \infty \text{ as } n\to\infty\},\]
\[\Ecal_0 = \{t\in[0,1] : \liminf_{n\to\infty} Z_n(t) > 0\},\]
and more generally for $\alpha\ge 0$,
\[\Ecal_\alpha = \Big\{t\in [0,1] : \liminf_{n\to\infty}\frac{Z_n(t)}{n^\alpha} > 0\Big\}.\]

\begin{thm}\label{mainthm}
There exist exceptional times of transience for the switch random walk: $\Ecal$ is non-empty almost surely. In fact, the Hausdorff dimension of $\Ecal_\alpha$ equals $1/2$ almost surely for any $\alpha\in[0,1/2)$. On the other hand, $\Ecal_\alpha$ is empty almost surely for any $\alpha>1/2$.
\end{thm}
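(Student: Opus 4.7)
The theorem splits into three claims: (a) $\Ecal_\alpha=\emptyset$ for $\alpha>1/2$; (b) $\dim \Ecal_\alpha\le 1/2$ for every $\alpha\ge 0$; and (c) $\dim \Ecal_\alpha\ge 1/2$ for $\alpha\in[0,1/2)$. A common structural input for (a) and (b) is that $t\mapsto Z_n(t)$ is piecewise constant on $[0,1]$, jumping only at points of the Poisson processes $N_1,\dots,N_n$, so the number of pieces is $1+\sum_{j\le n}N_j(1)$, with mean $\approx n$ and Poisson-concentration around that value. Moreover, writing $\sigma_k(t):=\prod_{j\le k}X_j(t)$, the sequence $(\sigma_k(t))_{k\ge 1}$ is i.i.d.\ uniform on $\{\pm1\}$ for any realization of the resampled $X_j(t)$'s, so the walk $(Z_m(t))_{m\le n}$ restricted to any single piece is marginally distributed as a simple symmetric random walk.

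For (a), Hoeffding's bound $\P(Z_n\ge cn^\alpha)\le\exp(-c^2 n^{2\alpha-1}/2)$, multiplied by the $O(n)$ pieces and summed, converges for $\alpha>1/2$, so Borel--Cantelli forces $\sup_{t\in[0,1]}Z_n(t)<cn^\alpha$ eventually and $\Ecal_\alpha=\emptyset$. For (b), note that $\Ecal_\alpha\subseteq\bigcup_{c\in\Q_+,\,m_0\in\N}\liminf_k B_k^{c,m_0}$ where
\[B_k^{c,m_0}:=\{t\in[0,1]:Z_m(t)\ge c m^\alpha\text{ for all }m\in[m_0,2^k]\}.\]
A ballot/boundary-crossing estimate gives $\P(B_k^{c,m_0})\asymp 2^{-k/2}$; combined with the piecewise-constant structure and the standard bound $\E\sum_i|I_i|^s\lesssim (2^k)^{1-s}$ for the $O(2^k)$ Poisson pieces $I_i$, one obtains $\E H^s(B_k^{c,m_0})\lesssim 2^{-k/2}\cdot(2^k)^{1-s}=2^{k(1/2-s)}$, summable in $k$ for every $s>1/2$. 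Borel--Cantelli applied to $s$-Hausdorff content then yields $\dim \Ecal_\alpha\le 1/2$.

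For (c), use a second-moment/Frostman argument. Choose $c,m_0$ so that $\P(B_k):=\P(B_k^{c,m_0})\asymp 2^{-k/2}$ and define random measures $\mu_k(dt)=\ind_{B_k(t)}\,dt/\P(B_k)$, so that $\E\mu_k([0,1])=1$. The crux is a two-point estimate of the form
\[\P\bigl(B_k(t_1)\cap B_k(t_2)\bigr)\ \lesssim\ \P(B_k)^2\cdot|t_1-t_2|^{-1/2}\qquad\text{for }|t_1-t_2|\ge 2^{-k},\]
together with the trivial $\P(B_k(t_1)\cap B_k(t_2))\le\P(B_k)$ in the complementary regime. Splitting the $r$-energy $\iint|t-s|^{-r}\P(B_k(t)\cap B_k(s))/\P(B_k)^2\,dt\,ds$ at $|t-s|=2^{-k}$ yields a $k$-uniform bound for every $r<1/2$. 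Paley--Zygmund then gives $\P(\mu_k([0,1])\ge 1/2)\ge c'>0$ uniformly in $k$, and a subsequential weak limit produces a nonzero measure $\mu$ with finite $r$-energy supported on $\limsup_k B_k\subseteq \Ecal_\alpha$; Frostman's lemma delivers $\dim \Ecal_\alpha\ge r$ for every $r<1/2$.

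The main obstacle is the two-point estimate. The walks $Z(t_1)$ and $Z(t_2)$ are coupled through the $X_j$'s not resampled in $(t_1,t_2)$, and $\E[\sigma_k(t_1)\sigma_k(t_2)]=e^{-k|t_1-t_2|}$ shows that they agree essentially up to step $\asymp 1/|t_1-t_2|$ and decorrelate thereafter. Converting this heuristic into a sharp bound on the joint event that both walks stay above the sublinear boundary $cm^\alpha$ is delicate, because $Z_n$ is a nonlinear functional of the underlying $X_j$'s. The pairing identity $\sigma_{2m-1}+\sigma_{2m}=\sigma_{2m-1}(1+X_{2m})\in\{0,2\sigma_{2m-1}\}$, together with the odd/even notation $\odd,\even$ anticipated in the preamble, suggests a reduction to two correlated lazy walks whose joint survival probability is amenable to a Brownian approximation, and this is where the technical heart of the proof is expected to lie.
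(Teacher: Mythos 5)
Your handling of the easy parts is sound, and in one place genuinely different from the paper. For $\alpha>1/2$, your union bound over the $O(n)$ intervals of constancy (using that, conditionally on the Poisson clocks, the configuration on each piece is uniform) replaces the paper's conditional second-moment argument in Section \ref{subsec:alphabig}; both work. For the upper bound on the dimension, your first-moment bound on Hausdorff content, $\E\big[\sum_i |I_i|^s\ind\{\text{piece $i$ good}\}\big]\lesssim 2^{-k/2}\,2^{k(1-s)}$, exploiting that the good set $B_k^{c,m_0}$ is a union of whole pieces of constancy, is a more elementary route than the paper's, which instead invokes the Schramm--Steif criterion (Lemma \ref{Hdimupper}) and devotes Section \ref{influencesec} to proving the influence estimate $\mathcal I_m(P_n)\asymp (n-m+1)/n^{3/2}$ (Proposition \ref{influenceprop}). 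Your route avoids influences entirely; the paper's buys pivotality estimates of independent interest. (Two small glosses: for $\alpha>1/2$ you should quantify over rational $c$, or apply the bound with an exponent $\alpha'\in(1/2,\alpha)$; and for the lower bound you assert $\P(B_k^{c,m_0})\gtrsim 2^{-k/2}$ for $\alpha>0$, which is Lemma \ref{Pnalphaasymp} and rests on Ritter's theorem.)

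The lower bound, however, has a genuine gap precisely where you flag it: the two-point estimate $\P(B_k(t_1)\cap B_k(t_2))\lesssim \P(B_k)^2|t_1-t_2|^{-1/2}$ is the technical heart of the theorem (Proposition \ref{EPhifinite}, all of Section \ref{EPhisec}) and is not proved in your plan. The decorrelation heuristic $\E[\sigma_k(t_1)\sigma_k(t_2)]=e^{-k|t_1-t_2|}$ identifies the correct exponent but gives no control of the joint survival event, and the pairing identity $\sigma_{2m-1}+\sigma_{2m}=\sigma_{2m-1}(1+X_{2m})$ concerns consecutive steps of a single walk rather than the coupling between the two times, so it is not clear it leads anywhere; no Brownian-approximation argument is set up. The paper's proof is quite different: it partitions $\{1,\dots,n\}$ into periods at the indices where $X_i(0)\neq X_i(t)$, observes that the increments of $Z(0)$ and $Z(t)$ agree on odd periods and are mirrored on even periods, so that joint positivity on an even period forces $Z(0)$ to stay in the strip $(0,2W)$ with $W=(Z(0)+Z(t))/2$; Lemma \ref{starthalfway} and the FKG inequality then square the survival probability on each even period, a telescoping identity (Propositions \ref{VtoA'} and \ref{telescope}) sews the even periods into a single walk of length of order $n$, and the first period, which cannot be squared, contributes exactly the factor $\P(A_1'(t))^{-1}\asymp t^{-1/2}$ that makes the $\gamma$-energy integrable for $\gamma<1/2$. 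Without this (or a worked-out substitute) the Frostman step has nothing to feed on. A secondary omission: Paley--Zygmund plus Frostman yields $\dim\Ecal_\alpha\ge\gamma$ only with positive probability; the almost-sure statement in the theorem needs the ergodicity argument of Lemma \ref{ergodic} (and the closure technicality of Lemma \ref{Ttechnicality}, which in your framework can be patched since an energy-finite measure is atomless), neither of which your plan addresses.
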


It is an interesting question as to whether $\Ecal_{1/2}$ is empty or not. It is possible that the methods that we use to prove Theorem \ref{mainthm} could be extended to investigate this more delicate case, but this would require more detailed analysis of random walk sample paths that is beyond the scope of this paper.

We also show that the event that $Z_n$ is positive is noise sensitive. In fact we prove a stronger quantitative noise sensitivity result.

\begin{thm}\label{NSthm}
Let $(\eps_n, n\ge 1)$ be any sequence in $(0,1)$ such that $n\eps_n\to\infty$. The sequence of events $(\{Z_n>0\}, n\ge 1)$ is quantitatively noise sensitive with respect to the sequence $(\eps_n, n\ge 1)$, by which we mean that
\[\P(Z_n(0)>0 \text{ and } Z_n(\eps_n)>0) - \P(Z_n(0)>0)^2 \to 0\]
as $n\to\infty$.
\end{thm}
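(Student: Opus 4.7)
The plan is to reduce the theorem to a conditional bivariate central limit theorem. Define $W_j := X_j(0)\,X_j(\eps_n)$ for $j=1,\ldots,n$ and $\xi_k := \prod_{j=1}^k X_j(0)$ for $k=1,\ldots,n$. Then $(W_j)_{j=1}^n$ is an iid sequence with $\P(W_j=1) = (1+e^{-\eps_n})/2$, independent of the sequence $(\xi_k)_{k=1}^n$, which is itself iid Rademacher (the map $(X_j(0)) \mapsto (\xi_k)$ is a measure-preserving bijection on $\{-1,+1\}^n$). Writing $w_k := \prod_{j=1}^k W_j$ and using $X_j(\eps_n) = X_j(0)\,W_j$, one obtains the clean representation
\[
Z_n(0) = \sum_{k=1}^n \xi_k, \qquad Z_n(\eps_n) = \sum_{k=1}^n \xi_k w_k,
\]
which exhibits both walks as linear functionals of a common iid Rademacher sequence, with coefficients modulated by an independent ``noise pattern'' $(w_k)$.

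Next I would condition on $W$. The summands $(\xi_k, \xi_k w_k)_{k=1}^n$ are then independent, uniformly bounded, zero-mean $\R^2$-valued variables, with
\[
\mathrm{Cov}\!\big((Z_n(0), Z_n(\eps_n)) \,\big|\, W\big) = \begin{pmatrix} n & S_n \\ S_n & n \end{pmatrix}, \qquad S_n := \sum_{k=1}^n w_k.
\]
The Lindeberg condition is immediate from boundedness, so the bivariate Lindeberg--Feller CLT applies: along any sequence of $W$'s for which $\rho_W := S_n/n$ converges to a limit $r$, the rescaled pair $(Z_n(0)/\sqrt{n},\,Z_n(\eps_n)/\sqrt{n})$ converges weakly, conditionally on $W$, to a centred bivariate Gaussian with correlation $r$. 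Sheppard's formula then yields
\[
\P\big(Z_n(0)>0,\, Z_n(\eps_n)>0 \,\big|\, W\big) \;\longrightarrow\; \tfrac{1}{4} + \tfrac{1}{2\pi}\arcsin(\rho_W)
\]
along such a subsequence.

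It remains to show that $\rho_W$ vanishes. Since $\E[w_k] = e^{-k\eps_n}$ and $\E[w_k w_l] = e^{-|k-l|\eps_n}$, direct summation yields
\[
\E[\rho_W] \;\le\; \frac{1}{n(e^{\eps_n}-1)}, \qquad \E[\rho_W^2] \;\le\; \frac{1}{n} + \frac{2}{n(e^{\eps_n}-1)},
\]
both of which tend to $0$ under the hypothesis $n\eps_n\to\infty$, so $\rho_W \to 0$ in $L^2$. Taking an arbitrary subsequence of $n$ and extracting a further subsequence along which $\rho_W \to 0$ almost surely, the conditional CLT of the previous step gives $\P(Z_n(0)>0,\,Z_n(\eps_n)>0 \mid W) \to 1/4$ almost surely, and bounded convergence yields $\P(Z_n(0)>0,\,Z_n(\eps_n)>0) \to 1/4$. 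Combined with $\P(Z_n>0) \to 1/2$ (which follows from the classical CLT applied to $Z_n=\sum\xi_k$), this gives $\P(Z_n(0)>0,\,Z_n(\eps_n)>0) - \P(Z_n(0)>0)^2 \to 0$. The main subtlety is bridging the conditional CLT, whose statement requires a deterministic sequence of covariance matrices, with the unconditional limit when $\rho_W$ is itself random; the subsequence argument sketched above is the cleanest way to handle this.
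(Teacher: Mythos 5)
Your proposal is correct, and it takes a genuinely different route from the paper. The common kernel is the same structural observation: writing $W_j=X_j(0)X_j(\eps_n)$ and $w_k=\prod_{j\le k}W_j$, your identity $Z_n(0)=\sum_k\xi_k$, $Z_n(\eps_n)=\sum_k\xi_k w_k$ is exactly the paper's ``increments agree on odd periods, are mirrored on even periods'' (the sign changes of $w_k$ occur at the change indices $I_\ell(\eps_n)$, so $w_k=+1$ on odd periods and $-1$ on even ones). But from there the arguments diverge. The paper stops the walks at the random index $I_{K(n)}(\eps_n)$, writes the two walks there as $U'_n\pm V'_n$ with $(U'_n,V'_n)$ exchangeable and symmetric, gets the limit $1/4$ by a purely combinatorial symmetry count (no limit theorem), and then spends its Part 3 transferring from $I_{K(n)}(\eps_n)$ back to $n$ via second-moment bounds on $|I_{K(n)}(\eps_n)-n|$ and CLT-type fluctuation estimates. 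You instead condition on the noise pattern $W$, invoke the bivariate Lindeberg--Feller CLT plus Sheppard's formula, and kill the conditional correlation by the computation $\E[w_kw_l]=e^{-|k-l|\eps_n}$, which gives $\E[\rho_W^2]\le n^{-1}+2/(n(e^{\eps_n}-1))\to0$ precisely under $n\eps_n\to\infty$ (using $e^{\eps_n}-1\ge\eps_n$); the subsequence-plus-bounded-convergence step correctly bridges the random correlation with the deterministic-array CLT, and the continuity-set issue is harmless since only $r=0$ is used. Your approach buys a cleaner argument that works directly at step $n$ (no random time change, no transfer step) and makes the decorrelation quantitative; the paper's approach buys elementarity (exact symmetry gives exactly $1/4$ for the surrogate quantity without any CLT) and sets up the period decomposition that it reuses heavily in the proof of Theorem \ref{mainthm}. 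All the supporting claims you make check out: $(W_j)_j$ is indeed independent of $(X_j(0))_j$ and hence of the iid Rademacher sequence $(\xi_k)_k$, and $\P(Z_n>0)\to1/2$ handles the squared term.
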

We note that the usual definition of (quantitative) noise sensitivity uses $-\log(1-\eps_n)$ in place of $\eps_n$ above, but since $\eps_n\in(0,1)$, this is equivalent to our statement.

We observe that if $\liminf n\eps_n <\infty$, then for arbitrarily large values of $n$ none of the first $n$ bits are rerandomised by time $\eps_n$, and therefore one cannot expect the events $\{Z_n(0)>0\}$ and $\{Z_n(\eps_n)>0\}$ to decorrelate. In this sense Theorem \ref{NSthm} is as strong as it possibly could be; we say that the events $(\{Z_n>0\}, n\ge1)$ are \emph{maximally noise sensitive}.

Again, Theorem \ref{NSthm} is in stark contrast to the corresponding statement for the compass random walk. In fact, the event that $Y_n$ is positive is known to be noise \emph{stable} \cite{benjamini_et_al:noise_sens}, in that
\[\lim_{\eps\to 0} \sup_n \P(\text{sign}\,Y_n(0) \neq \text{sign}\,Y_n(\eps)) = 0.\]

\section{Background and notation}

\subsection{Existing literature}

Noise sensitivity and dynamical sensitivity has been an active area of research in probability since the papers of H{\"a}ggstr{\"o}m, Peres and Steif \cite{haggstrom_peres_steif:dynamical_perco} and Benjamini, Kalai and Schramm \cite{benjamini_et_al:noise_sens}. One of the highlights of the subject is the proof that the existence of an infinite component in critical percolation in two dimensions is dynamically sensitive \cite{garban_pete_schramm:fourier, schramm_steif:noise_sens_percolation}. The survey of Steif \cite{steif:survey} and book by Garban and Steif \cite{garban_steif:NSbook} provide further background and references.

Benjamini, H\"aggstr\"om, Peres and Steif \cite{benjamini_et_al:dynamic_sensitivity} considered many properties of a quite general dynamical sequence of random variables, incorporating results on what we call the compass random walk $Y$. In particular they showed that for the compass random walk, the strong law of large numbers and the law of the iterated logarithm are both dynamically stable: almost surely there are no exceptional times at which either of these laws does not hold for $Y(t)$. It is not too difficult to check that the strong law of large numbers is also dynamically stable for the switch random walk, but it follows from our results that the law of the iterated logarithm is dynamically sensitive; indeed, Theorem \ref{mainthm} implies that there almost surely exist times $t$ at which $Z_n(t)$ is negative for all large $n$.

Benjamini et al \cite{benjamini_et_al:dynamic_sensitivity} also considered random walks in higher dimensions. They showed that in $\Z^d$, transience for the compass random walk (or rather its obvious analogue) is dynamically stable when $d\ge 5$. For $d\in\{3,4\}$ they showed that transience is dynamically sensitive and the set of exceptional times almost surely has Hausdorff dimension $(4-d)/2$. They conjectured that for $d=2$ recurrence should be dynamically sensitive, which was proven by Hoffman \cite{hoffman:recurrence_dynamically_sensitive}, who also showed that the Hausdorff dimension of the set of exceptional times of transience is $1$ almost surely. Further properties of dynamical random walks were investigated by Khoshnevisan, Levin and M\'endez-Hern\'andez \cite{khoshnevisan_et_al:dynamical_gaussian_RWs, khoshnevisan:exceptional_times_dynamical_RWs}.

The sequences $\{Y_n>0\}$ and $\{Z_n>0\}$ have exactly the same distribution---as sequences---and yet one is noise stable and one is noise sensitive. Warren \cite{warren:tanaka}, inspired by work of Tsirelson \cite{tsirelson:triple_points}, gave a similar example of such a pair: writing
\[W_n = \sum_{k=1}^n\text{sign}(W_{k-1})X_k,\]
the process $(W_n, n\ge 0)$ is also a simple symmetric random walk, and therefore has the same distribution as $(Y_n, n\ge 0)$, yet the events $\{W_n>0\}$ are noise sensitive.

The object that we refer to as the switch random walk is also known by other names. It has been called the \emph{coin-turning} random walk by Engl\"ander and Volkov who introduced more general (static) versions in \cite{englander_volkov:turning_coin}, and these were further studied by Engl\"ander, Volkov and Wang \cite{englander_volkov_wang:coin_turning_scaling}. It has also been called the \emph{bootstrap} random walk by Collevecchio, Hamza and Shi, who studied the pair $(Y,Z)$ in \cite{collevecchio_hamza_shi:bootstrap_rws}; Collevecchio, Hamza and Liu gave a further generalisation in \cite{collevecchio_hamza_liu:invariance_princ_bootstrap_rw}.

\subsection{Layout of article}

This article is organised as follows. In Section \ref{sketch_sec} we give a rough sketch of the proofs of Theorems \ref{mainthm} and \ref{NSthm}. We then carry out the proof of Theorem \ref{NSthm} in Section \ref{NSsec}. The proof of Theorem \ref{mainthm} is substatially more complex, and we give an outline in Section \ref{outlineproof}, which reduces the bulk of the task to proving two propositions, Proposition \ref{EPhifinite} for the lower bound on the Hausdorff dimension and Proposition \ref{influenceprop} for the upper bound, together with several technical lemmas. The proof of Proposition \ref{EPhifinite} is the most interesting part of the article and substantially different from existing proofs of related results. Rather than relying on the methods detailed in \cite{garban_steif:NSbook} such as randomised algorithms or the spectral sample, it instead uses more hands-on methods, leaning heavily on the independence of increments of random walks. We carry this out in Section \ref{EPhisec}. Then in Section \ref{influencesec} we prove Proposition \ref{influenceprop}, which mainly consists of elementary but intricate approximations. Finally, in Section \ref{techlemsec} we prove the technical lemmas required to complete the proof of Theorem \ref{mainthm}.

\subsection{Notation and preparatory results}

Throughout, we write $f(n)\lesssim g(n)$ if there exists a constant $c\in(0,\infty)$ such that $f(n)\le c g(n)$ for all large $n$, and $f(n)\asymp g(n)$ if both $f(n)\lesssim g(n)$ and $g(n)\lesssim f(n)$. We use $\approx$ only in heuristics to mean ``is roughly equal to''. We write $\P_x$ for the probability measure under which our random walks begin from $x$, rather than $0$. To be precise, we mean that under $\P_x$,
\[Z_n = x + \sum_{k=1}^n \prod_{j=1}^k X_j\]
and similarly for $Z_n(t)$, $Y_n$ and $Y_n(t)$.

We will use the Fortuin-Kasteleyn-Ginibre (FKG) inequality \cite{FKG} using the partial order on $\{-1,1\}^\mathbb{N}$ given by setting $(x_1,x_2,\ldots)\le (y_1,y_2,\ldots)$ if $x_i\le y_i$ for all $i\in\mathbb{N}$. This says that if $f$ and $g$ are either both increasing functions or both decreasing functions with respect to this partial order, then
\begin{equation}\label{fkg1}
\E[f(X)g(X)]\ge \E[f(X)]\E[g(X)]
\end{equation}
and if $f$ is increasing but $g$ is decreasing, then
\begin{equation}\label{fkg2}
\E[f(X)g(X)]\le \E[f(X)]\E[g(X)].
\end{equation}

We gather here some useful and well-known facts about simple symmetric random walks.

\begin{lem}\label{LCLT}
Suppose that $j\ge 2$. If $|z|\le j^{3/4}$ and $z\equiv j$ (mod 2), then
\[\P(Z_j = z) \asymp \frac{1}{j^{1/2}} \exp\Big(-\frac{z^2}{2j}\Big).\]
If $z\not\equiv j$ (mod 2) then $\P(Z_j=z)=0$.
\end{lem}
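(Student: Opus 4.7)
The plan is to derive the estimate directly from the exact formula $\P(Z_j = z) = \binom{j}{(j+z)/2} 2^{-j}$, which holds whenever $z \equiv j \pmod 2$ and $|z|\le j$ (the parity statement is immediate). Writing $k=(j+z)/2$ and $j-k=(j-z)/2$, I apply Stirling's formula in its quantitative form
\[
n! = \sqrt{2\pi n}\,(n/e)^n\, e^{O(1/n)},
\]
which gives, for $k$ and $j-k$ both at least $1$,
\[
\binom{j}{k} 2^{-j} = \sqrt{\frac{j}{2\pi k(j-k)}}\cdot\frac{j^j}{(2k)^k (2(j-k))^{j-k}}\cdot e^{O(1/j)}.
\]
For $|z|\le j^{3/4}$ and $j$ large, $k$ and $j-k$ are both comparable to $j/2$, so $k(j-k) = (j^2-z^2)/4 \asymp j^2$ and the prefactor $\sqrt{j/(2\pi k(j-k))}$ is $\asymp 1/\sqrt{j}$. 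This handles the polynomial part of the estimate.

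For the exponential part, setting $u=z/j$, so that $2k/j = 1+u$ and $2(j-k)/j=1-u$, the remaining factor becomes
\[
\frac{j^j}{(2k)^k(2(j-k))^{j-k}} = \exp\!\Big(-\tfrac{j}{2}\bigl[(1+u)\log(1+u)+(1-u)\log(1-u)\bigr]\Big).
\]
A Taylor expansion gives $(1+u)\log(1+u)+(1-u)\log(1-u) = u^2 + O(u^4)$ uniformly for $|u|\le 1/2$, so the bracket equals $z^2/j^2 + O(z^4/j^4)$, and the exponent is $-z^2/(2j) + O(z^4/j^3)$. The constraint $|z|\le j^{3/4}$ is designed precisely so that this error satisfies $z^4/j^3 \le 1$, hence the exponential factor is $\exp(-z^2/(2j))$ multiplied by a quantity bounded above and below by positive constants. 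Combining with the prefactor estimate and absorbing the $e^{O(1/j)}$ from Stirling yields the claim.

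The main (mild) obstacle is simply tracking uniformity: I need the Stirling error $O(1/k)+O(1/(j-k))$ and the Taylor error $O(z^4/j^3)$ to both be $O(1)$ throughout the range $|z|\le j^{3/4}$. Both are immediate since $k,j-k \asymp j$ in this range and $|z|\le j^{3/4}$ is precisely the threshold at which $z^4/j^3 = O(1)$. For small $j$ (e.g.\ $j=2,3$) the permitted values of $z$ are just a finite list and the asymptotic $\asymp$ can be checked directly, so no separate argument is needed beyond noting the hypothesis $j\ge 2$.
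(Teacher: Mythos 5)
Your argument is correct, but it takes a different route from the paper, which does not prove the lemma at all: it simply cites the local central limit theorem in the form of \cite[Proposition 2.5.3 and Corollary 2.5.4]{lawler_limic:random_walk}. You instead give a self-contained proof from the exact formula $\P(Z_j=z)=\binom{j}{(j+z)/2}2^{-j}$ via quantitative Stirling, and your bookkeeping is sound: for $|z|\le j^{3/4}$ one has $k,\,j-k\asymp j$, so the Stirling errors $O(1/k)+O(1/(j-k))$ are $O(1/j)$ and the prefactor $\sqrt{j/(2\pi k(j-k))}\asymp j^{-1/2}$; the entropy expansion $(1+u)\log(1+u)+(1-u)\log(1-u)=u^2+O(u^4)$ turns the remaining factor into $\exp(-z^2/(2j)+O(z^4/j^3))$, and $|z|\le j^{3/4}$ is exactly the threshold making $z^4/j^3\le 1$, so the error is absorbed into the constants in $\asymp$. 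Your appeal to uniformity on $|u|\le 1/2$ does not quite cover the smallest values of $j$ (for $2\le j\le 15$ one can have $|u|=|z|/j$ up to $j^{-1/4}>1/2$), but as you note these finitely many pairs $(j,z)$ all have strictly positive probability and can be checked directly, so this is harmless. What each approach buys: the paper's citation is shorter and leans on a standard reference that also yields stronger and more general statements, while your proof is elementary, makes the role of the cutoff $j^{3/4}$ completely transparent, and keeps the paper self-contained at the cost of a page of calculation.
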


\begin{proof}
This is simply a version of the local central limit theorem: see for example \cite[Proposition 2.5.3 and Corollary 2.5.4]{lawler_limic:random_walk}.
\end{proof}

\begin{lem}\label{chernoff}
For any $j\ge 2$ and $x>0$,
\[\P(Z_j \ge x) \le \exp\Big(-\frac{x^2}{2j}\Big).\]
\end{lem}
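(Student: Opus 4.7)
The plan is to apply the standard exponential Chernoff bound, exploiting the fact that $Z_j$ is equal in distribution to $Y_j = \sum_{i=1}^j X_i$, a sum of independent $\pm 1$ random variables. Since the right-hand side of the desired bound only depends on the distribution of $Z_j$, it suffices to prove the same bound with $Y_j$ in place of $Z_j$.

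First, I would compute (or bound) the moment generating function of a single increment: for any $\lambda \in \R$,
\[\E[e^{\lambda X_1}] = \tfrac{1}{2}e^{\lambda} + \tfrac{1}{2}e^{-\lambda} = \cosh(\lambda) \le e^{\lambda^2/2},\]
where the inequality follows by comparing Taylor series coefficients, since $(2k)! \ge 2^k k!$ for all $k \ge 0$. By independence of the $X_i$'s, this yields $\E[e^{\lambda Y_j}] \le e^{j\lambda^2/2}$.

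Next I would apply Markov's inequality to $e^{\lambda Y_j}$ for $\lambda > 0$ to get
\[\P(Y_j \ge x) \le e^{-\lambda x}\E[e^{\lambda Y_j}] \le \exp\Big(-\lambda x + \frac{j\lambda^2}{2}\Big),\]
and then optimise by choosing $\lambda = x/j$, which gives exactly $\exp(-x^2/(2j))$. This completes the argument. There is no real obstacle here — the entire proof is a two-line application of a standard technique, and the only thing to verify carefully is the moment generating function bound $\cosh(\lambda) \le e^{\lambda^2/2}$, which is elementary. Alternatively one could simply cite Hoeffding's inequality, but writing out the short direct argument is probably cleaner.
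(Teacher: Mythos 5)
Your proposal is correct and is essentially identical to the paper's own proof: both apply Markov's inequality to $e^{\lambda Z_j}$, use the bound $\cosh(\lambda)\le e^{\lambda^2/2}$ via comparison of Taylor coefficients, and optimise with $\lambda = x/j$. The only cosmetic difference is that you pass explicitly through $Y_j$ by equality in distribution, while the paper works with $Z_j$ directly.
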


\begin{proof}
This is an application of a simple Chernoff-style bound. For any $\lambda>0$,
\[\P(Z_j \ge x) \le \E[e^{\lambda Z_j}]e^{-\lambda x} = \E[e^{\lambda X_1}]^j e^{-\lambda x} = \Big(\frac{e^\lambda + e^{-\lambda}}{2}\Big)^j e^{-\lambda x}.\]
Noting that 
\[\frac{e^\lambda + e^{-\lambda}}{2} = \sum_{i=0}^\infty \frac{\lambda^{2i}}{(2i)!} \le \sum_{i=0}^\infty \frac{(\lambda^2/2)^i}{i!} = e^{\lambda^2/2},\]
we get
\[\P(Z_j \ge x) \le \exp\Big(\frac{\lambda^2 j}{2} - \lambda x\Big)\]
and choosing $\lambda = x/j$ gives the result.
\end{proof}

\begin{lem}\label{refl}
For any $z,j\in\mathbb{N}$,
\[\P(Z_i > -z \,\,\,\,\forall i=1,\ldots,j) = \P(Z_j \in [-z+1,z]).\]
\end{lem}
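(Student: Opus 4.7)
The plan is to deduce this from the classical reflection principle, applied to the simple symmetric random walk $Z$ (recall that although $Z$ is defined as a particular function of $X$, it has exactly the distribution of a SSRW, so standard SSRW identities apply).

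First I would consider the event $A = \{\min_{i\le j} Z_i \le -z\}$. The reflection principle, applied by reflecting the path after its first visit to $-z$ about the horizontal line at height $-z$, gives a bijection between paths of length $j$ starting at $0$ that hit $-z$ and end at some level $k \ge -z+1$, and paths of length $j$ starting at $0$ that end at level $-2z-k \le -z-1$. Since the random walk assigns equal probability to each length-$j$ path, this yields
\[\P(A,\, Z_j \ge -z+1) \;=\; \P(Z_j \le -z-1).\]
Combining with the trivial inclusion $\{Z_j \le -z\}\subseteq A$, we obtain
\[\P(A) \;=\; \P(Z_j \le -z) + \P(Z_j \le -z-1).\]

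Next I would take complements: the event $\{Z_i > -z \text{ for all } i=1,\ldots,j\}$ is precisely the complement of $A$ (for $i=0$, $Z_0=0>-z$ is automatic since $z\in\mathbb{N}$). Hence
\[\P(Z_i > -z\,\,\forall i=1,\ldots,j) \;=\; 1 - \P(Z_j \le -z) - \P(Z_j \le -z-1) \;=\; \P(Z_j \ge -z+1) - \P(Z_j \le -z-1).\]

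Finally I would apply the symmetry of the SSRW, $\P(Z_j \le -z-1) = \P(Z_j \ge z+1)$, to rewrite the right-hand side as $\P(Z_j \ge -z+1) - \P(Z_j \ge z+1) = \P(Z_j \in [-z+1, z])$, which is the claimed identity. There is no real obstacle here beyond careful bookkeeping of the endpoints; the only thing worth double-checking is that the reflection bijection is valid for discrete simple walks (it is, because reflection maps $\pm 1$ steps to $\mp 1$ steps and preserves the uniform measure on paths).
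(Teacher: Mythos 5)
Your proposal is correct and follows essentially the same route as the paper: both decompose according to whether the walk reaches $-z$, apply the reflection principle at the first hitting time of $-z$ to get $\P(\text{hit } -z,\, Z_j\ge -z+1)=\P(Z_j\le -z-1)$, and then use the symmetry $\P(Z_j\le -z-1)=\P(Z_j\ge z+1)$ to conclude. The only difference is cosmetic bookkeeping (you compute the probability of the complement event and subtract, while the paper decomposes the positivity event directly).
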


\begin{proof}
This is a version of the reflection principle. Note that
\begin{align*}
\P(Z_i > -z \,\,\,\,\forall i=1,\ldots,j) &= \P(Z_i > -z \,\,\,\,\forall i=1,\ldots,j, \,\, Z_j\ge -z+1)\\
&= \P(Z_j\ge -z+1) - \P(\exists i\le j : Z_i \le -z, \,\, Z_j \ge -z+1).
\end{align*}
Now by reflecting the random walk at the first hitting time of $-z$ (applying the strong Markov property), we have
\[\P(\exists i\le j : Z_i \le -z, \,\, Z_j \ge -z+1) = \P(Z_j \le -z-1) = \P(Z_j \ge z+1),\]
which establishes the result.
\end{proof}

\begin{cor}\label{reflcor}
For any $n\ge 1$,
\[\P(Z_i > 0 \,\,\,\,\forall i=1,\ldots,n) \asymp n^{-1/2}.\]
\end{cor}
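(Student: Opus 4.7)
The plan is to reduce the statement to Lemma \ref{refl}, which handles walks constrained to stay strictly above $-z$ for $z\in\mathbb{N}$. Since the condition $Z_i>0$ is not directly of that form (it would correspond to $z=0$, which is not covered), I would first condition on the first step. The event $\{Z_i>0 \ \forall i\le n\}$ forces $Z_1=1$, i.e.\ $X_1=1$, which contributes a factor of $\P(X_1=1)=1/2$. On this event, $\prod_{j=1}^k X_j = \prod_{j=2}^k X_j$ for every $k\ge 2$, so writing
\[ \tilde Z_m := \sum_{k=2}^{m+1}\prod_{j=2}^k X_j, \]
we have $Z_{m+1} = 1+\tilde Z_m$, and $\tilde Z$ is a switch random walk built from $(X_2,X_3,\ldots)$, hence equal in distribution to $Z$ and independent of $X_1$.

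The condition $Z_{m+1}>0$ for $m=0,1,\ldots,n-1$ then translates into $\tilde Z_m>-1$ for $m=1,\ldots,n-1$ (the case $m=0$ being automatic since $\tilde Z_0=0$). Applying Lemma \ref{refl} to $\tilde Z$ with $z=1$ and $j=n-1$, this probability equals $\P(\tilde Z_{n-1}\in\{0,1\})$. By the parity of simple random walk, only one of $0,1$ is reachable at time $n-1$, and by Lemma \ref{LCLT} the corresponding point probability is $\asymp (n-1)^{-1/2} \asymp n^{-1/2}$. Multiplying by the factor of $1/2$ from the initial conditioning yields the claimed estimate.

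The only delicate point is the off-by-one between the strict inequality $Z_i>0$ and the version $Z_i>-1$ that is natural for Lemma \ref{refl}; the first-step conditioning is precisely what bridges this gap, after which the argument is essentially mechanical and handles small values of $n$ (where Lemma \ref{LCLT}'s hypothesis $j\ge 2$ fails) by direct inspection, since the $\asymp$ relation is asymptotic.
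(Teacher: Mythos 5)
Your proof is correct and follows essentially the same route as the paper: condition on the first step (factor $1/2$), reduce to a walk of length $n-1$ that must stay above $-1$, apply Lemma \ref{refl} with $z=1$ to get $\P(Z_{n-1}\in[0,1])$, and conclude with Lemma \ref{LCLT}. Your explicit construction of $\tilde Z$ is just a more hands-on version of the paper's use of the shifted measure $\P_1$, and your remarks about parity and small $n$ are fine.
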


\begin{proof}
We have
\[\P(Z_i>0\,\,\,\,\forall i = 1,\ldots,n) = \P(Z_1 = 1, \, Z_i>0\,\,\,\,\forall i = 2,\ldots,n) = \frac12 \P_1(Z_i>0\,\,\,\,\forall i=1,\ldots, n-1).\]
Applying Lemma \ref{refl}, the above equals $\frac12\P_1(Z_{n-1}\in[0,1])$, and by Lemma \ref{LCLT} this is of order $n^{-1/2}$.
\end{proof}

\section{Sketch proofs}\label{sketch_sec}

For $t\ge 0$ let $I_0(t) = 0$, and for $k\ge 1$ define
\[I_k(t) = \min\{i > I_{k-1}(t) : X_i(t)\neq X_i(0)\}.\]
We think of $t$ being small, so that for many indices $i$ we have $X_i(t) = X_i(0)$, and we call $I_k(t)$ the ``$k$th change'' (at time $t$ relative to time $0$). We call the steps of the random walk between $0=I_0(t)$ and $I_1(t)$ the \emph{first period}, the steps between $I_1(t)$ and $I_2(t)$ the \emph{second period}, and so on. For each $k$ we let $J_k(t) = I_k(t)-I_{k-1}(t)$ be the length of the $k$th period.

Our first key observation is that the increments of $Z_n(0)$ and $Z_n(t)$ are equal during odd periods (that is, for $n\in[I_{2k},I_{2k+1}(t)-1]$); and the increments of $Z_n(0)$ and $-Z_n(t)$ are equal during even periods (that is, for $n\in[I_{2k+1}(t),I_{2k+2}(t)-1]$). See Figure \ref{reflectionfig}.

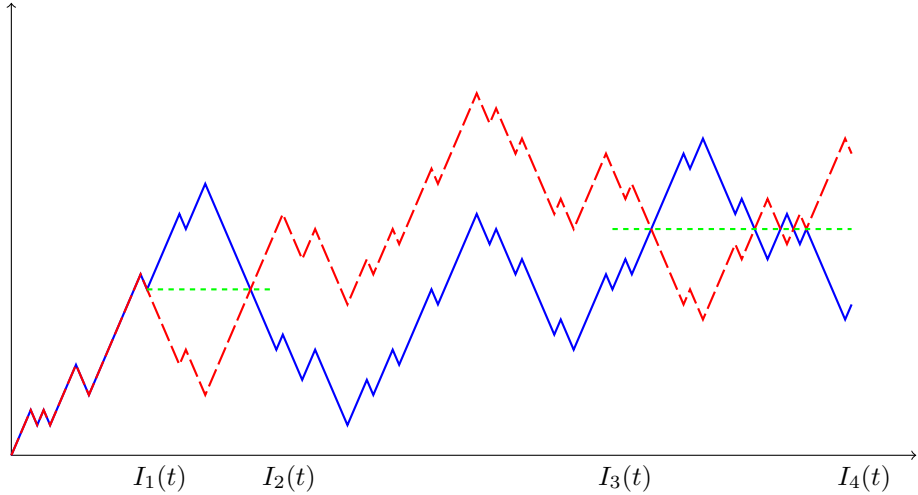
\begin{figure}[h!]
\centering
\begin{tikzpicture}[yscale=0.2, xscale=0.085]
\draw [<->] (0,30) -- (0,0) -- (140,0);
\draw[thick, blue] (0,0) -- (3,3) -- (4,2) -- (5,3) -- (6,2) -- (10,6) -- (12,4) -- (20,12) --  (21,11) -- (26, 16) -- (27,15) -- (30,18) -- (35,13) -- (41,7) -- (42,8) -- (45,5) -- (47,7) -- (52,2) -- (55,5) -- (56,4) -- (59,7) -- (60,6) -- (65,11) -- (66,10) -- (72,16) -- (74,14) -- (75,15) -- (78,12) -- (79,13) -- (84,8) -- (85,9) -- (87, 7) -- (92, 12) -- (93,11) -- (95,13) -- (96,12) -- (104,20) -- (105,19) -- (107,21) -- (112,16) -- (113,17) -- (117,13) -- (120,16) -- (122,14) -- (123,15) -- (129,9) -- (130,10);
\draw[thick, red, dash pattern={on 7pt off 2pt}] (0,0) -- (3,3) -- (4,2) -- (5,3) -- (6,2) -- (10,6) -- (12,4) -- (20,12) --  (21,11) -- (26,6) -- (27,7) -- (30,4) -- (35,9) -- (41,15) -- (42,16) -- (45,13) -- (47,15) -- (52,10) -- (55,13) -- (56,12) -- (59,15) -- (60,14) -- (65,19) -- (66,18) -- (72,24) -- (74,22) -- (75,23) -- (78,20) -- (79,21) -- (84,16) -- (85,17) -- (87, 15) -- (92, 20) -- (93,19) -- (95,17) -- (96,18) -- (104,10) -- (105,11) -- (107,9) -- (112,14) -- (113,13) -- (117,17) -- (120,14) -- (122,16) -- (123,15) -- (129,21) -- (130,20);
\node[below] at (23,0) {$I_1(t)$};
\node[below] at (43,0) {$I_2(t)$};
\node[below] at (95,0) {$I_3(t)$};
\node[below] at (132,0) {$I_4(t)$};
\draw[green, thick, dash pattern={on 2pt off 2pt}] (21,11) -- (40,11); 
\draw[green, thick, dash pattern={on 2pt off 2pt}] (93,15) -- (130,15);
\end{tikzpicture}
\caption{A realisation of $Z(0)$ in blue and $Z(t)$ in red (dashed) for the first four periods. The dotted green lines mark the lines of reflection.}
\label{reflectionfig}
\end{figure}

To see why Theorem \ref{NSthm} is true, let $t=\eps\in(0,1)$ and run the random walks up to step $n$. Let $U_n(t)$ be the sum of the increments of $Z_n(0)$ over odd periods up to step $n$, and $V_n(t)$ be the sum of the increments over even periods up to step $n$. Then clearly
\[Z_n(0) = U_n(t) + V_n(t).\]
(Note that $U_n(t)$ and $V_n(t)$ depend on $t$ because the periods depend on $t$, even though $Z_n(0)$ itself does not depend on $t$.) Of course, we can also write $Z_n(t)$ as the sum of its increments over odd periods, plus the sum of its increments over even periods. But the increments of $Z_n(t)$ over odd periods are equal to the increments of $Z_n(0)$ over odd periods, and the increments of $Z_n(t)$ over even periods are precisely \emph{minus} the increments of $Z_n(0)$ over even periods. Thus
\[Z_n(t) = U_n(t) - V_n(t).\]
As a result,
\[\P(Z_n(0)>0 \text{ and } Z_n(t)>0) = \P(U_n(t) + V_n(t)>0 \text{ and } U_n(t) - V_n(t)>0) = \P(U_n(t)>|V_n(t)|).\]
Now we note that---as long as $t\gg 1/n$, so that there are many periods by step $n$---the quantities $U_n(t)$ and $V_n(t)$ have \emph{almost} the same distribution when $n$ is large, and are \emph{almost} independent. They are also symmetric and have small probability of being equal or equalling zero. If $U$ and $V$ are independent symmetric continuous random variables, then $\P(U>|V|)=1/4$. Approximating this statement with $U_n(t)$ and $V_n(t)$ in place of $U$ and $V$ gives that
\[\P(Z_n(0)>0 \text{ and } Z_n(t)>0) \to 1/4\]
as $n\to\infty$, which is what is needed to prove Theorem \ref{NSthm} since clearly $\P(Z_n(0)>0)^2\to 1/4$.

Theorem \ref{mainthm} is significantly more difficult to prove. We give a sketch of a proof of the existence of exceptional times, whose main ideas are also the key to the most difficult part of calculating the Hausdorff dimension of the set of such times. There will be a much more detailed proof outline in Section \ref{outlineproof}.

It is simpler to deal with $\Ecal_0$ rather than $\Ecal$ or $\Ecal_\alpha$ for much of the proof. We define the event
\[P_n(t) = \{Z_k(t)>0 \,\,\,\,\forall k\in\{1,\ldots,n\}\},\]
and consider
\[\kappa_n = \int_0^1 \ind_{P_n(t)} \d t,\]
the Lebesgue amount of time in $[0,1]$ that the random walk $Z(t)$ stays positive for its first $n$ steps. To show the existence of exceptional times, ignoring some technical issues, it essentially suffices to show that
\[\E[\kappa_n^2] \le C\E[\kappa_n]^2\]
for some finite constant $C$, from which we can deduce that $\P(\kappa_n > 0) \ge 1/C$ and let $n\to\infty$.

For the first moment, by Fubini's theorem and stationarity,
\[\E[\kappa_n] = \int_0^1 \P(P_n(t)) \d t = \int_0^1 \P(P_n(0)) \d t = \P(P_n(0)).\]
Corollary \ref{reflcor} tells us that $\P(P_n(0))\asymp n^{-1/2}$.

For the second moment, again applying Fubini's theorem and stationarity, a simple argument (using Fubini's theorem and stationarity, and which we will give in full later) gives
\[\E[\kappa_n^2] \le 2\int_0^1 \P(P_n(0) \cap P_n(t)) \d t.\]
Our task is therefore to show that $\int_0^1 \P(P_n(0)\cap P_n(t)) \lesssim \P(P_n(0))^2 \asymp n^{-1}$.

During the even periods, the increments of $Z(0)$ and $Z(t)$ are mirrored. One can use this to show that the probability that both $Z(0)$ and $Z(t)$ remain positive over an even period is smaller than the square of the probability that $Z(0)$ stays positive over the same period. The total length of the even periods is roughly $n/2$ provided $t$ is not too small, and so (skipping over several important details) we might hope that, at least when $t$ is not too small,
\[\P(P_n(0)\cap P_n(t)) \lesssim \P(Z_{n/2}(0)>0)^2.\]
The details required to show this involve sewing together the increments over the even periods to create one random walk path of length roughly $n/2$. It is possible to do this in a very simple and natural way, except for one remaining issue: we cannot ignore the first period, on which the two random walks $Z(0)$ and $Z(t)$ are equal. On this period clearly the best upper bound we can get on the probability that both random walks stay positive is simply $\P(Z_{I_1(t)-1}(0)>0)$, rather than this quantity squared. A more reasonable overall upper bound is therefore
\[\P(P_n(0)\cap P_n(t)) \lesssim \frac{\P(Z_{n/2}(0)>0)^2}{\P(Z_{I_1(t)-1}(0)>0)}.\]
This does indeed hold, and since $I_1(t)\approx 2/t$, we have $\P(Z_{I_1(t)-1}(0)>0)\asymp (2/t)^{-1/2}$, so that
\[\int_0^1 \P(P_n(0)\cap P_n(t)) \d t \lesssim \int_0^1 \frac{n^{-1}}{t^{1/2}} \d t \asymp n^{-1}\]
as required. One may further note that an extra factor of $t^{-\gamma}$ in the integral would not make any difference to the calculation provided that $\gamma<1/2$, which combined with Frostman's lemma essentially gives us the lower bound of $1/2$ on the Hausdorff dimension.

\section{Proof of Theorem \ref{NSthm}: noise sensitivity for $\{Z_n>0\}$}\label{NSsec}

Fix a sequence $(\eps_n, n\ge1)$ with $\eps_n\in(0,1)$ for all $n$ and $n\eps_n\to\infty$. Many of the definitions in this section will depend implicitly on $\eps_n$. Recall that for $t\ge0$ we defined $I_0(t) = 0$, and for $k\ge 1$,
\[I_k(t) = \min\{i > I_{k-1}(t) : X_i(t)\neq X_i(0)\},\]
the start of the $(k+1)$th period. Let
\[K(n) = 2\lfloor n(1-e^{-\eps_n})/4 \rfloor.\]
We note that, since each $X_i$ has rerandomised by time $\eps_n$ with probability $1-e^{-\eps_n}$, the period length $I_k(\eps_n)-I_{k-1}(\eps_n)$ is a Geometric random variable of parameter $(1-e^{-\eps_n})/2$. Thus by the law of large numbers we have $I_{K(n)}(\eps_n) \approx n$.

There will be three main parts to this proof. In the first part, we show that the probability that the sum of the increments of a random walk on the odd periods is larger than the modulus of the sum of the increments on the even periods converges to $1/4$. In the second part, we will prove Theorem \ref{NSthm} but with $I_{K(n)}(\eps_n)$ in place of $n$. Finally, in the third part, we will transfer from using $I_{K(n)}(\eps_n)$ to $n$.

\vspace{2mm}

\noindent
\textbf{Part 1: Probability that sum of increments on odd periods exceed modulus of sum of increments on even periods converges to $1/4$.}\\
Define
\[U_n = \sum_{i=1}^{I_1(\eps_n) - 1} X_i + \sum_{i=I_2(\eps_n)}^{I_3(\eps_n) - 1} X_i + \ldots + \sum_{i=I_{K(n)-2}(\eps_n)}^{I_{K(n)-1}(\eps_n) -1}X_i + X_{I_{K(n)}(\eps_n)}\]
and
\[V_n = \sum_{i=I_1(\eps_n)}^{I_2(\eps_n) - 1} X_i + \sum_{i=I_3(\eps_n)}^{I_4(\eps_n) - 1} X_i + \ldots + \sum_{i=I_{K(n)-1}(\eps_n)}^{I_{K(n)}(\eps_n) -1}X_i.\]
In words, $U_n$ is the sum of the increments of a simple symmetric random walk (in fact $Y$, though this is not important) over the odd periods up to step roughly $n$, and $V_n$ is the sum over the even periods up to step roughly $n$. This is, of course, not quite true, since $I_{K(n)}(\eps_n)$ is unlikely to be exactly $n$. On the positive side, this gives $U_n$ and $V_n$ some nice properties: in particular, they are identically distributed.

We claim that
\[\lim_{n\to\infty} \P(U_n + V_n > 0 \text{ and } U_n - V_n > 0 ) = 1/4.\]
To see this, we observe that
\begin{align*}
1 &= \P(U_n>V_n>0) + \P(U_n>-V_n>0) + \P(V_n>U_n>0) + \P(-V_n>U_n>0)\\
&\hspace{4mm} + \P(U_n<V_n<0) + \P(U_n<-V_n<0) + \P(V_n<U_n<0) + \P(-V_n<U_n<0)\\
&\hspace{8mm} + \P(U_n = 0 \text{ or } V_n = 0 \text{ or } U_n=V_n \text{ or } U_n=-V_n).
\end{align*}
The first eight terms are all equal, and the last tends to $0$ as $n\to\infty$. Thus
\begin{align*}
\P(U_n + V_n > 0 \text{ and } U_n - V_n > 0 ) &= \P( U_n > |V_n| )\\
&= \P( U_n > V_n > 0 ) + \P( U_n > -V_n > 0 ) + \P(U_n>V_n=0)\\
&\to 1/8 + 1/8 + 0 = 1/4
\end{align*}
as claimed.

\vspace{2mm}

\noindent
\textbf{Part 2: Proving Theorem \ref{NSthm} but with $I_{K(n)}(\eps_n)$ in place of $n$.}\\
Noting that $K(n)$ is even, we now let
\[U'_n = Z_{I_1(\eps_n)-1}(0) + \sum_{\substack{k=3\\ k\text{ odd}}}^{K(n)-1} \big(Z_{I_k(\eps_n)-1}(0)-Z_{I_{k-1}(\eps_n)-1}(0)\big) + Z_{I_{K(n)}(\eps_n)}(0) - Z_{I_{K(n)}(\eps_n)-1}(0)\]
and
\[V'_n = \sum_{\substack{k=2\\ k\text{ even}}}^{K(n)} (Z_{I_k(\eps_n)-1}(0)-Z_{I_{k-1}(\eps_n)-1}(0)).\]
Clearly we have $Z_{I_{K(n)}(\eps_n)}(0) = U'_n + V'_n$. Moreover, since the increments of $Z(\eps_n)$ and $Z(0)$ are equal on odd periods and mirrored on even periods, we have
\[Z_{I_{K(n)}(\eps_n)}(\eps_n) = U'_n - V'_n.\]
Thirdly, note that (again recalling that $K(n)$ is even) $U'_n$ and $V'_n$ have the same joint distribution as $U_n$ and $V_n$. Thus we have
\begin{align*}
\P(Z_{I_{K(n)}(\eps_n)}(0) > 0 \text{ and } Z_{I_{K(n)}(\eps_n)}(\eps_n) > 0) &= \P( U'_n + V'_n > 0 \text{ and } U'_n - V'_n > 0 )\\
&= \P( U_n + V_n > 0 \text{ and } U_n - V_n > 0 )
\end{align*}
which we have just shown (in Part 1) converges to $1/4$ as $n\to\infty$. Thus
\[\P(Z_{I_{K(n)}(\eps_n)}(0) > 0 \text{ and } Z_{I_{K(n)}(\eps_n)}(\eps_n) > 0) - \P(Z_{I_{K(n)}(\eps_n)}(0) > 0)^2 \to \frac{1}{4} - \Big(\frac{1}{2}\Big)^2 = 0,\]
establishing the theorem with $I_{K(n)}(\eps_n)$ in place of $n$.

We remark here that so far, the proof works for any value of $\eps_n\in(0,1)$. However, if $\eps_n$ is too small, then the value of $K(n)$ is not large, which will cause problems in the following.

\vspace{2mm}

\noindent
\textbf{Part 3: Transferring from $I_{K(n)}(\eps_n)$ to $n$.}\\
We claim that
\begin{equation}\label{part3main}
\P( Z_n(0) > 0 \text{ and } Z_n(\eps_n)>0 ) = \P(Z_{I_{K(n)}(\eps_n)}(0) > 0 \text{ and } Z_{I_{K(n)}(\eps_n)}(\eps_n) > 0 ) + o(1).
\end{equation}
We will use the elementary bounds, for any events $A$, $B$, $A'$ and $B'$,
\[\P(A\cap B) \le \P(A'\cap B') + \P(A\setminus A') + \P(B\setminus B')\]
and
\[\P(A\cap B) \ge \P(A'\cap B') - \P(A'\setminus A) - \P(B'\setminus B).\]
For the upper bound, using the first fact above,
\begin{align*}
\P( Z_n(0) > 0 \text{ and } Z_n(\eps_n)>0 ) &\le \P(Z_{I_{K(n)}(\eps_n)}(0) > 0 \text{ and } Z_{I_{K(n)}(\eps_n)}(\eps_n) > 0 )\\
&\hspace{20mm} + \P(Z_n(0) > 0 \text{ but } Z_{I_{K(n)}(\eps_n)}(0) \le 0)\\
&\hspace{30mm} + \P(Z_n(\eps_n) > 0 \text{ but } Z_{I_{K(n)}(\eps_n)}(\eps_n) \le 0),
\end{align*}
and for the lower bound, using the second fact above,
\begin{align*}
\P( Z_n(0) > 0 \text{ and } Z_n(\eps_n)>0 ) &\ge \P(Z_{I_{K(n)}(\eps_n)}(0) > 0 \text{ and } Z_{I_{K(n)}(\eps_n)}(\eps_n) > 0 )\\
&\hspace{20mm} - \P(Z_n(0) \le 0 \text{ but } Z_{I_{K(n)}(\eps_n)}(0) > 0)\\
&\hspace{30mm} - \P(Z_n(\eps_n) \le 0 \text{ but } Z_{I_{K(n)}(\eps_n)}(\eps_n) > 0).
\end{align*}
We will show that
\[\P(Z_n(0) > 0 \text{ but } Z_{I_{K(n)}(\eps_n)}(0) \le 0)\to 0;\]
the three other similar terms can be dealt with similarly. To do this, we first note that for any $x_n,y_n>0$,
\begin{multline}\label{splitKineq}
\P\big(Z_n(0) > 0 \text{ but } Z_{I_{K(n)}(\eps_n)}(0) \le 0\big) \le \P\big( |I_{K(n)}(\eps_n) - n| > x_n \big) + \P\big( Z_n(0) \in (0,y_n) \big)\\
+ \P\Big( Z_n(0) \ge y_n \text{ but } \min_{j\in[n-x_n,n+x_n]} Z_j(0) \le 0 \Big).
\end{multline}
We first consider $\P( |I_{K(n)}(\eps_n) - n| > x_n)$. We use Markov's inequality to see that
\[\P\big( |I_{K(n)}(\eps_n) - n| > x_n \big) \le \frac{\E\big[|I_{K(n)}(\eps_n) - n|^2\big]}{x_n^2},\]
and using the fact that $I_{K(n)}(\eps_n)$ is a sum of $K(n)$ independent Geometric random variables of parameter $(1-e^{-\eps_n})/2$, we have
\begin{align*}
\E\big[|I_{K(n)}(\eps_n) - n|^2\big] &= \text{Var}(I_{K(n)}(\eps_n)) + \E[I_{K(n)}(\eps_n)]^2 - 2n\E[I_{K(n)}(\eps_n)] + n^2\\
&= \frac{2K(n)(1+e^{-\eps_n})}{(1-e^{-\eps_n})^2} + \frac{4K(n)^2}{(1-e^{-\eps_n})^2} - \frac{4nK(n)}{1-e^{-\eps_n}} + n^2.
\end{align*}
Recalling that $K(n) = 2\lfloor n(1-e^{-\eps_n})/4 \rfloor$, the above is at most
\[\frac{n(1+e^{-\eps_n})}{1-e^{-\eps_n}} + n^2 - \Big(\frac{8n}{1-e^{-\eps_n}}\Big)\Big(\frac{n(1-e^{-\eps_n})}{4}-1\Big) + n^2 \le \frac{10n}{1-e^{-\eps_n}}.\]
Thus
\[\P\big( |I_{K(n)}(\eps_n) - n| > x_n \big) \le \frac{10n}{x_n^2(1-e^{-\eps_n})}.\]
Choosing the value $x_n = n^{5/8}/(1-e^{-\eps_n})^{3/8}$, we have
\begin{equation}\label{splitKterm1}
\P\big( |I_{K(n)}(\eps_n) - n| > x_n \big) \le \frac{10}{n^{1/4}(1-e^{-\eps_n})^{1/4}} \to 0
\end{equation}
by our assumption that $n\eps_n\to\infty$.

We now move on to the second term on the right-hand side of \eqref{splitKineq}. Choosing $y_n = n^{3/8}/\eps_n^{1/8}$, since $(Z_j(0), j\ge 0)$ is a simple symmetric random walk and $y_n\ll n^{1/2}$, by the central limit theorem we have
\begin{equation}\label{splitKterm2}
\P\big( Z_n(0) \in (0,y_n) \big) \to 0.
\end{equation}
For the final term in \eqref{splitKineq}, by the strong Markov property and Lemma \ref{refl},
\begin{align*}
\P\Big( Z_n(0) \ge y_n \text{ but } \min_{j\in[n-x_n,n+x_n]} Z_j(0) \le 0 \Big) &\le \P_0\Big(\max_{j\in[0,x_n]} Z_j(0) \ge y_n\Big) + \P_{y_n}\Big(\min_{j\in[0,x_n]} Z_j(0) \le 0\Big)\\
&= 2\big(1-\P(Z_{\lfloor x_n\rfloor}(0)\in[-y_n+1,y_n])\big).
\end{align*}
Since $x_n = n^{5/8}/(1-e^{-\eps_n})^{3/8} \ll n^{6/8}/\eps_n^{2/8} = y_n^2$, the central limit theorem tells us that the above also converges to zero as $n\to\infty$. Combining this with \eqref{splitKterm1} and \eqref{splitKterm2}, we see from \eqref{splitKineq} that
\[\P\big(Z_n(0) > 0 \text{ but } Z_{I_{K(n)}(\eps_n)}(0) \le 0\big)\to 0.\]
This, together with very similar bounds on the other three terms mentioned above, establishes \eqref{part3main}. In Part 2 we showed that
\[\lim_{n\to\infty}\P(Z_{I_{K(n)}(\eps_n)}(0) > 0 \text{ and } Z_{I_{K(n)}(\eps_n)}(\eps_n) > 0 ) = 1/4,\]
and clearly $\P(Z_n(0)>0)\to 1/2$, so the proof of Theorem \ref{NSthm} is complete.

\section{Outline of the proof of Theorem \ref{mainthm}: Hausdorff dimension of exceptional times is $1/2$}\label{outlineproof}

We now outline the main steps in turning the heuristic in Section \ref{sketch_sec} into a rigorous proof that the Hausdorff dimension of
\[\Ecal_\alpha = \Big\{t\in [0,1] : \liminf_{n\to\infty} \frac{Z_n(t)}{n^\alpha}>0\Big\}\]
is $1/2$ almost surely for any $\alpha\in[0,1/2)$. Since $\Ecal_\alpha\subset\Ecal_0$ for any $\alpha\ge0$, it suffices to give an upper bound on the dimension of $\Ecal_0$ and a lower bound on the dimension of $\Ecal_\alpha$ for $\alpha\in(0,1/2)$. This also, of course, implies that $\Ecal$ is non-empty almost surely and therefore that there exist exceptional times of transience. We will proceed by stating a series of results, whose proofs we delay until later sections.

\subsection{Lower bound on Hausdorff dimension of $\Ecal_\alpha$}\label{subsec:LBHD}

As in the sketch proof, we define the event
\[P_n(t) = \{Z_i(t)>0 \,\,\,\,\forall i=1,\ldots,n\},\]
and similarly
\[P_n = \{Z_i>0 \,\,\,\,\forall i=1,\ldots,n\}.\]
We will use these events for much of the proof. However, to consider $\Ecal_\alpha$ for $\alpha>0$, we will also need the more complicated events
\[P^\alpha_n(t) = \big\{Z_i(t)\ge i^\alpha \,\,\,\,\forall i=1,\ldots,n\big\}\]
and similarly for $P^\alpha_n$, defined for any $\alpha\ge 0$, though we will mostly think of $\alpha\in[0,1/2)$. Note that $P^0_n(t)=P_n(t)$.

Let
\[T^\alpha_n = \{t\in[0,1] : P^\alpha_n(t) \text{ holds}\}.\]
We write $\bar T^\alpha_n$ for the closure of $T^\alpha_n$ and $T^\alpha=\bigcap_n T^\alpha_n$. Finally define, for $\gamma\in[0,1)$,
\[\Phi^\alpha_n(\gamma) = \frac{1}{\P(P^\alpha_n)^2}\int_0^1 \int_0^1 \frac{\ind_{P^\alpha_n(s)\cap P^\alpha_n(t)}}{|t-s|^\gamma} \d s \d t.\]

Our lower bound on the Hausdorff dimension of $\Ecal_\alpha$ will be based on the following corollary of \cite[Lemma 6.2]{schramm_steif:noise_sens_percolation}, which in turn is an application of Frostman's lemma.

\begin{lem}\label{Hdimlower}
Suppose that for some $\alpha\ge 0$ and $\gamma\in(0,1)$ we have
\[\sup_n \E[\Phi^\alpha_n(\gamma)]<\infty.\]
Then the Hausdorff dimension of $\bigcap_n \bar T^\alpha_n$ is at least $\gamma$ with strictly positive probability.
\end{lem}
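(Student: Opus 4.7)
The plan is the classical Frostman energy method: construct a random Borel measure $\mu_n$ on $[0,1]$ whose $\gamma$-energy equals $\Phi^\alpha_n(\gamma)$ and whose support is contained in $\bar T^\alpha_n$, then pass to a subsequential weak limit $\mu$ to produce a measure of finite $\gamma$-energy supported on $\bigcap_n\bar T^\alpha_n$.

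Concretely, I would set
\[
d\mu_n(t) \;=\; \frac{\ind_{P^\alpha_n(t)}}{\P(P^\alpha_n)}\,dt,
\]
so that $\mu_n$ is supported on $T^\alpha_n\subseteq\bar T^\alpha_n$. By Fubini and stationarity of the dynamics in $t$, $\E[\mu_n([0,1])] = 1$, and directly from the definition
\[
\iint_{[0,1]^2} \frac{d\mu_n(s)\,d\mu_n(t)}{|s-t|^\gamma} \;=\; \Phi^\alpha_n(\gamma),
\]
so by hypothesis the expected energy is bounded by some $C<\infty$ uniformly in $n$. Since $\gamma\ge 0$ and $|s-t|\le 1$ on $[0,1]^2$, the integrand is at least $1$, giving $\mu_n([0,1])^2\le\Phi^\alpha_n(\gamma)$ pointwise, and hence $\E[\mu_n([0,1])^2]\le C$. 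Paley--Zygmund then yields $\P(\mu_n([0,1])\ge 1/2)\ge 1/(4C)$ uniformly in $n$, while Markov's inequality gives $\P(\Phi^\alpha_n(\gamma)\le M)\ge 1-C/M$. For $M$ large enough the intersection event
\[
E_n \;=\; \{\mu_n([0,1])\ge \tfrac12\}\cap\{\Phi^\alpha_n(\gamma)\le M\}
\]
has probability at least some $\delta>0$ uniform in $n$, whence by the reverse Fatou lemma for probability measures $\P(\limsup_n E_n)\ge\delta>0$.

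On $\limsup_n E_n$ I would extract a subsequence $(n_k)$ on which $E_{n_k}$ holds; the total masses $\mu_{n_k}([0,1])$ lie in $[1/2,\sqrt M]$, so by compactness of bounded positive Borel measures on the compact interval $[0,1]$, a further subsequence of the $\mu_{n_k}$ converges weakly to some $\mu$ with $\mu([0,1])\ge 1/2$. The nested containment $T^\alpha_{n+1}\subseteq T^\alpha_n$ (immediate from $P^\alpha_{n+1}(t)\Rightarrow P^\alpha_n(t)$) means that for each fixed $m$, $\mu_{n_k}$ is supported on the closed set $\bar T^\alpha_m$ for all $n_k\ge m$; Portmanteau applied to the open complement $[0,1]\setminus\bar T^\alpha_m$ therefore forces $\mu$ to be supported on $\bar T^\alpha_m$, and hence on $\bigcap_m \bar T^\alpha_m$. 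Lower semicontinuity of the kernel $|s-t|^{-\gamma}$ (via truncation at height $N$ and monotone convergence) then gives $\iint|s-t|^{-\gamma}d\mu\,d\mu\le\liminf_k\Phi^\alpha_{n_k}(\gamma)\le M$, so $\mu$ has finite $\gamma$-energy. Frostman's lemma concludes $\dim_H(\bigcap_m\bar T^\alpha_m)\ge\gamma$ on this event.

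The main obstacle will be arranging the joint control of total mass and energy along a common subsequence on a positive-probability event: Paley--Zygmund controls mass only in expectation and Markov controls energy only in probability, so the simultaneous control needed for weak compactness requires intersecting these events and invoking reverse Fatou to produce infinitely many indices at which both bounds hold. Once that is set up, the remainder---weak compactness on $[0,1]$, Portmanteau, lower semicontinuity of the energy kernel, and Frostman's lemma---is standard.
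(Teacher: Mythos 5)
Your argument is correct and follows essentially the same route as the paper: the same measure $\mu_n$, the same energy identity with $\Phi^\alpha_n(\gamma)$, Paley--Zygmund for the total mass, Markov for the energy, and Fatou to get both bounds simultaneously for infinitely many $n$ on an event of positive probability. The only difference is that where the paper delegates the final deterministic step to \cite[Lemma 6.2]{schramm_steif:noise_sens_percolation}, you reprove that lemma directly via weak compactness, Portmanteau, lower semicontinuity of the truncated kernel, and the energy form of Frostman's lemma, which is exactly what that cited lemma does.
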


Given Lemma \ref{Hdimlower}, which we will prove in Section \ref{techlemsec}, our main task in proving the lower bound becomes to show that $\E[\Phi^\alpha_n(\gamma)]$ is bounded above for each $\alpha,\gamma<1/2$. This will be the most difficult (and most novel) part of our proof, and will be carried out in Section \ref{EPhisec}.

\begin{prop}\label{EPhifinite}
For any $\alpha,\gamma\in[0,1/2)$,
\[\sup_n \E[\Phi^\alpha_n(\gamma)]<\infty.\]
\end{prop}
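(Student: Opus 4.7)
The plan is to reduce the double integral to a one-dimensional integral over the time gap, and then prove a decoupling bound for each fixed gap. By Fubini's theorem, stationarity of the dynamics, and symmetry in $s,t$,
\[\E[\Phi^\alpha_n(\gamma)] \;\le\; \frac{2}{\P(P^\alpha_n)^2}\int_0^1 \frac{\P(P^\alpha_n(0)\cap P^\alpha_n(u))}{u^\gamma}\,du.\]
Extending Corollary \ref{reflcor} via Lemmas \ref{LCLT} and \ref{refl} to the event ``stay above $i^\alpha$'' yields $\P(P^\alpha_n)\asymp n^{-1/2}$ for $\alpha\in[0,1/2)$, so it suffices to show that the right-hand integral is $O(n^{-1})$.

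The heart of the argument is the decoupling bound
\[\P(P^\alpha_n(0)\cap P^\alpha_n(u)\mid I_1(u)=j) \;\lesssim\; \frac{\P(P^\alpha_n)^2}{\P(P^\alpha_{j-1})} \qquad \text{for } 1\le j\le n.\]
The structural input is the reflection observation from Section \ref{sketch_sec}. Setting $\epsilon_k=\prod_{i\le k}X_i(0)$ and letting $\sigma_k\in\{\pm1\}$ equal $+1$ on odd periods and $-1$ on even periods, the variables $(\epsilon_k)$ are i.i.d.\ uniform $\pm 1$'s and are independent of the Poisson period structure $(I_k(u))$, while $Z_i(0)=\sum_{k\le i}\epsilon_k$ and $Z_i(u)=\sum_{k\le i}\sigma_k\epsilon_k$. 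On each even period $[I_{2k-1}(u),I_{2k}(u)-1]$, the two walks start at a common value $L_k := Z_{I_{2k-1}(u)-1}(0)$ and are mirror images about $L_k$, so requiring both $Z(0)$ and $Z(u)$ to remain $\ge i^\alpha$ forces $Z(0)$ into the symmetric band $[i^\alpha,\,2L_k-i^\alpha]$ throughout that period. By Lemmas \ref{LCLT} and \ref{refl}, the probability of such a band constraint on a period of length $m$ is smaller than the one-sided ``stay above $i^\alpha$'' probability by an extra factor of order $(L_k-i^\alpha)/\sqrt{m}$ in the typical regime $L_k\lesssim\sqrt{m}$, and is trivially controlled otherwise.

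To establish the decoupling bound I would condition on the entire period structure $(I_k(u))$ and on the boundary values $Z_{I_k(u)-1}(0)$, then use independence of the $(\epsilon_k)$ across disjoint periods to factor the conditional probability. The first period is odd and the two walks coincide on it, so it contributes the single factor $\P(P^\alpha_{I_1(u)-1})$ that appears in the denominator; each subsequent even period contributes the additional band factor described above, and these extra factors reassemble, together with the one-sided factors from the surviving odd periods and the bounds of Lemmas \ref{LCLT}--\ref{refl}, into a single random-walk path of length $\sim n$ yielding the numerator $\P(P^\alpha_n)^2$. This reassembly is the main obstacle: the band factor has to be controlled uniformly in $L_k$, the implicit constants cannot be allowed to compound across the $\Theta(nu)$ periods, and the typical regime $L_k\lesssim\sqrt{I_{2k}(u)-I_{2k-1}(u)}$ and the atypical regime $L_k$ much larger need separate treatment, most likely with an FKG input to handle the correlations introduced by shared boundary values between consecutive periods.

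To conclude, for $u\le 1/n$ the trivial bound $\P(P^\alpha_n(0)\cap P^\alpha_n(u))\le \P(P^\alpha_n)$ contributes at most $\lesssim n^{\gamma-3/2}=o(n^{-1})$, using $\gamma<1/2$. For $u>1/n$, since $I_1(u)$ is Geometric with parameter $(1-e^{-u})/2\asymp u$, a short calculation gives $\E[\P(P^\alpha_{I_1(u)-1})^{-1}]\asymp u^{-1/2}$, and therefore
\[\int_{1/n}^1 \frac{\P(P^\alpha_n(0)\cap P^\alpha_n(u))}{u^\gamma}\,du \;\lesssim\; \P(P^\alpha_n)^2\int_{1/n}^1 u^{-\gamma-1/2}\,du \;\lesssim\; \P(P^\alpha_n)^2,\]
since $\gamma+\tfrac12<1$. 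Dividing through by $\P(P^\alpha_n)^2$ then yields the required uniform bound.
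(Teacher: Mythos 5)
Your reduction to $\int_0^1 u^{-\gamma}\,\P(P^\alpha_n(0)\cap P^\alpha_n(u))\,du$ and your target decoupling bound $\P(P^\alpha_n(0)\cap P^\alpha_n(u)\mid I_1(u)=j)\lesssim \P(P^\alpha_n)^2/\P(P^\alpha_{j-1})$ have exactly the right shape (this is the heuristic of Section \ref{sketch_sec}), and the final integration over $u$ is fine. But the proposal does not prove the decoupling bound: you state the band observation on even periods and then explicitly defer the ``reassembly'' --- controlling the band factor uniformly in $L_k$, preventing constants from compounding over the $\Theta(nu)$ periods, and handling the dependence between consecutive periods through the shared boundary values --- and these are precisely the points where a naive per-period argument fails. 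If each even period contributes a factor known only up to a multiplicative constant (as your $(L_k-i^\alpha)/\sqrt{m}$ estimate from Lemmas \ref{LCLT} and \ref{refl} would be), the product over $\Theta(nu)$ periods picks up $C^{\Theta(nu)}$, which destroys the bound; and conditioning on the boundary values $Z_{I_k(u)-1}(0)$ does not make the periods independent in a way that lets you simply multiply one-sided estimates back together into $\P(P^\alpha_n)^2$.

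The paper's proof of Proposition \ref{EPhifinite} is built precisely to avoid these two problems, and neither device appears in your proposal. First, it never estimates a band probability per period: after replacing the actual starting point of each even period by the midpoint $W_{I_{2j-1}(t)-1}(t)$ of the band (Lemma \ref{starthalfway}, a monotonicity statement with no constant), the event of staying in $(0,2W_{I_{2j-1}(t)-1}(t))$ is split by the FKG inequality \eqref{fkg2} and symmetry into the \emph{square} of a one-sided probability --- an inequality with constant $1$, so nothing compounds across periods. Second, instead of bounding each factor, it telescopes the product exactly (Proposition \ref{telescope}): conditionally on the period structure, the auxiliary walk on the $2j$-th period has the same law as $W$ on the next odd period, so the whole product collapses to $\P\big(\bigcap_j A'_{2j-1}(t)\cap E_n(t)\big)/\P(A'_1(t)\cap E_n(t))$, i.e.\ a single simple-random-walk positivity probability over total length at least $n/8$ on the good event $E_n(t)$, divided by the first-period term $\P(A'_1(t))\gtrsim t^{1/2}$; only at that final stage are Corollary \ref{reflcor} and Lemma \ref{Eproblarge} invoked. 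Two further points: the paper sidesteps your $\alpha$-dependent bands entirely by bounding $P^\alpha_n(u)\subset P_n(u)$ inside the joint event, keeping $\alpha$ only in the denominator; and $\P(P^\alpha_n)\asymp n^{-1/2}$ is not an ``extension of Corollary \ref{reflcor}'' --- the barrier $i^\alpha$ is not constant, so reflection does not apply, and the paper needs Ritter's theorem (Lemma \ref{Pnalphaasymp}) for this. As it stands, your proposal identifies the correct strategy but leaves its central inequality unestablished.
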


Combining Lemma \ref{Hdimlower} and Proposition \ref{EPhifinite} tells us that for any $\alpha,\gamma\in[0,1/2)$, the Hausdorff dimension of $\bigcap_n \bar T^\alpha_n$ is at least $\gamma$ with strictly positive probability. This is not quite what was promised in Theorem \ref{mainthm}, which in fact says that the Hausdorff dimension of $\Ecal_\alpha$ is $1/2$ almost surely for any $\alpha\in[0,1/2)$. Moving from $\bigcap_n \bar T^\alpha_n$ to $T^\alpha$ is a technicality that can be handled in basically the same way as \cite[Lemma 3.2]{haggstrom_peres_steif:dynamical_perco}; and of course $T^\alpha\subset \Ecal_\alpha$. Finally, showing that the Hausdorff dimension of $\Ecal_\alpha$ is at least $1/2$ almost surely, rather than with positive probability, follows from standard ergodicity arguments (of course this cannot hold for $T^\alpha$, since with positive probability $Z_2(t)=0$ for all $t\in[0,1]$). The following lemmas take care of these steps. We will prove them in Section \ref{techlemsec}.

\begin{lem}\label{Ttechnicality}
For any $\alpha\ge 0$, we have
\[\bigcap_{n=1}^\infty \bar T^\alpha_n = \bigcap_{n=1}^\infty T^\alpha_n\]
almost surely.
\end{lem}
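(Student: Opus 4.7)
The easy inclusion $\bigcap_n T^\alpha_n \subseteq \bigcap_n \bar T^\alpha_n$ is immediate. For the reverse inclusion, my plan is to show that almost surely, if $t \in \bar T^\alpha_n \setminus T^\alpha_n$ for some $n$, then $t$ must fail to lie in $\bar T^\alpha_m$ for some $m$. First I note that each $X_j(\cdot)$ is piecewise constant, with jumps only at the atoms of the Poisson process $N_j$, so $Z_i(\cdot)$ is c\`adl\`ag for every $i$. If $t\notin T^\alpha_n$ then $Z_{i_0}(t) < i_0^\alpha$ for some $i_0\le n$, and right-continuity of $Z_{i_0}$ propagates this strict inequality to a right-neighbourhood $[t,t+\delta)$; so no sequence approaching $t$ from the right can lie in $T^\alpha_m$ for any $m\ge n$. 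Consequently, if $t\in\bigcap_m \bar T^\alpha_m$ as well, then for every $m$ there is a left-approaching sequence in $T^\alpha_m$, and since $Z_i$ is constant between consecutive jumps of $N_1,\ldots,N_i$, passing to the left limit gives $Z_i(t-)\ge i^\alpha$ for every $i\ge 1$. Combined with $Z_{i_0}(t) < i_0^\alpha \le Z_{i_0}(t-)$, this forces $t$ to be a jump time of some $N_j$ with $j\le i_0$.

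The task thus reduces to showing that almost surely no jump time $\tau\in[0,1]$ of any Poisson process $N_j$ satisfies $Z_i(\tau-)\ge i^\alpha$ for all $i\ge 1$. I plan to handle this by a Campbell-type computation. If $\tau$ denotes the $k$-th jump time of $N_j$, then conditionally on $\tau$, the family $(X_{j'}(\tau-))_{j'\ge 1}$ is i.i.d.\ uniform on $\{-1,+1\}$: for $j'\ne j$, $X_{j'}(\tau-) = X_{j'}^{N_{j'}(\tau-)}$ selects one of the independent refresh variables, which is independent of $N_j$; and for $j'=j$, $X_j(\tau-) = X_j^{k-1}$ is likewise independent of $N_j$. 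Hence $(Z_i(\tau-))_{i\ge 1}$ is distributed as an SSRW, and Corollary \ref{reflcor} gives
\[\P\big(Z_i(\tau-)\ge i^\alpha \,\,\forall i\le N \,\big|\, \tau\text{ is a jump of }N_j\big) \le \P(P_N) \asymp N^{-1/2}.\]
Letting $N\to\infty$ shows that the conditional probability of $Z_i(\tau-)\ge i^\alpha$ for every $i\ge 1$ is zero. Taking unconditional expectations, summing over the (almost surely finitely many) jump times of $N_j$ in $[0,1]$ and then over $j\in\N$, establishes that almost surely no such jump time exists.

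The only real technical obstacle is the careful verification that conditioning on a jump of $N_j$ at $\tau$ leaves $(X_{j'}(\tau-))_{j'}$ distributed as i.i.d.\ uniform $\pm 1$; this is standard but relies on keeping track of the independence between the different Poisson processes and between the $N_j$'s and the refresh variables $X_j^i$. All other steps are routine.
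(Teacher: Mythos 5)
Your proof is correct and follows essentially the same route as the paper's: both reduce the boundary set $\bar T_n^\alpha\setminus T_n^\alpha$ to the countable collection of rerandomisation times $\tau_j^{(i)}$ and then exploit that, at any fixed such time, the relevant configuration is an honest simple symmetric random walk, which almost surely fails to stay above the curve $i\mapsto i^\alpha$ forever. The only cosmetic difference is that you analyse the left-limit configuration (justified via right-continuity of $t\mapsto Z_i(t)$), whereas the paper observes that at $\tau_j^{(i)}$ the walk almost surely hits both $0$ and $2Z_{j-1}$ after step $j$, so it eventually drops below the curve regardless of the state of bit $j$, and then concludes via the nestedness of the sets $T_n^\alpha$.
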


\begin{lem}\label{ergodic}
For each $\alpha\ge0$, the Hausdorff dimension of $\Ecal_\alpha$ is a constant (possibly depending on $\alpha$) almost surely.
\end{lem}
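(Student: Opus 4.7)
The plan is to invoke the Hewitt--Savage zero-one law applied to the iid sequence of bit trajectories $(X_j(\cdot))_{j\geq 1}$. These are mutually independent and identically distributed random elements of the path space, since the Poisson clocks $(N_j)_{j\geq 1}$ and refresh bits $(X_j^i)_{j,i}$ are independent across $j$ and each $X_j(\cdot)$ is a stationary rate-$1$ symmetric Markov chain on $\{-1,+1\}$. By Hewitt--Savage, any event invariant under finite permutations of the index $j$ has probability $0$ or $1$, so it suffices to show that $\{\dim\Ecal_\alpha\leq c\}$ is such an event for every $c\in\R$.

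For a permutation $\sigma$ of $\{1,\ldots,N\}$ extended to the identity on $\{N+1,N+2,\ldots\}$, write $X_j^\sigma=X_{\sigma(j)}$ and $Z_n^\sigma(t)=\sum_{k=1}^n\prod_{j=1}^k X_j^\sigma(t)$. The key observation is that for $k\geq N$, commutativity of multiplication gives $\prod_{j=1}^k X_j^\sigma(t)=\prod_{j=1}^k X_j(t)$, so for $n\geq N$ the difference
\[Z_n^\sigma(t)-Z_n(t)=\sum_{k=1}^{N-1}\Big[\prod_{j=1}^k X_{\sigma(j)}(t)-\prod_{j=1}^k X_j(t)\Big]\]
is independent of $n$ and satisfies $|Z_n^\sigma(t)-Z_n(t)|\leq 2(N-1)$. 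For any fixed $\alpha>0$ this bounded additive constant is negligible compared with $n^\alpha$, hence $\liminf_n Z_n^\sigma(t)/n^\alpha=\liminf_n Z_n(t)/n^\alpha$ for every $t$, so $\Ecal_\alpha^\sigma=\Ecal_\alpha$ pointwise. The event $\{\dim\Ecal_\alpha\leq c\}$ is therefore $\sigma$-invariant for every finite $\sigma$, lies in the exchangeable $\sigma$-algebra, and Hewitt--Savage immediately gives that $\dim\Ecal_\alpha$ is almost surely constant.

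The main obstacle is the case $\alpha=0$: a bounded perturbation can push $\liminf Z_n(t)$ from a finite positive value across $0$, so $\Ecal_0$ is not pointwise permutation-invariant. My plan is to bypass this by applying the above Hewitt--Savage argument to the subset $\Ecal:=\{t\in[0,1]:Z_n(t)\to+\infty\}=\{t:\liminf Z_n(t)=+\infty\}$, which \emph{is} invariant under bounded additive perturbations and hence under every finite permutation, yielding that $\dim\Ecal$ is almost surely constant. Since $\Ecal\subset\Ecal_0$, it then remains to show that $\dim(\Ecal_0\setminus\Ecal)=0$ almost surely; this difference consists of those $t$ at which $(Z_n(t))_n$ is eventually trapped in some bounded window $[1,M]$. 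A Fubini argument gives Lebesgue measure zero, and a dyadic covering argument exploiting the Markov structure of the bits together with the exponential decay of the probability that a $\pm 1$ walk stays in a bounded window on a long time interval should upgrade this to a Hausdorff dimension bound of zero.
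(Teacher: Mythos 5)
For $\alpha>0$ your argument is correct and takes a genuinely different route from the paper. The paper constructs the product space explicitly and applies the ergodic theorem to the coordinate shift $\theta$ that deletes $X_1(\cdot)$; since the shifted walk is $X_1(t)\big(Z_{n+1}(t)-X_1(t)\big)$, the shift can flip the sign of the walk, which is why the paper works with the union of $\Ecal_\alpha$ and its mirror image $\Ecal'_\alpha$ and then uses symmetry together with the fact that the dimension of a union is the maximum of the dimensions. Your exchangeability argument avoids both complications: a permutation supported on $\{1,\ldots,N\}$ changes $Z_n(t)$ only by an additive term bounded by $2(N-1)$ uniformly in $n$, so for $\alpha>0$ the set $\Ecal_\alpha$ is literally unchanged, the event $\{\dim\Ecal_\alpha\le c\}$ is exchangeable, and Hewitt--Savage applies directly (modulo the measurability of $\dim\Ecal_\alpha$, which both arguments take for granted). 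For $\alpha>0$ this is, if anything, cleaner than the paper's proof.

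The gap is at $\alpha=0$, which the lemma does assert. First, your description of $\Ecal_0\setminus\Ecal$ is wrong: this set is $\{t:\liminf_n Z_n(t)\in(0,\infty)\}$, and a finite positive $\liminf$ is entirely compatible with $\limsup_n Z_n(t)=\infty$; such a walk is not ``eventually trapped in a bounded window $[1,M]$'', it merely revisits some fixed level infinitely often while being eventually positive. So the exponential estimate for confinement to a window is not the relevant one. Second, even with the correct description, the claim $\dim(\Ecal_0\setminus\Ecal)=0$ almost surely is itself an exceptional-times upper bound, and nothing of this kind comes for free here: the paper's own upper bound on $\dim\Ecal_0$ requires the influence estimates of Proposition \ref{influenceprop} fed into the Schramm--Steif machinery of \cite{schramm_steif:noise_sens_percolation}, precisely because a naive first-moment covering at the natural time scale $1/n$ fails (about one of the first $n$ bits refreshes in such an interval). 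So the $\alpha=0$ case is not established by your sketch; as written you have proved the lemma only for $\alpha>0$ plus constancy of $\dim\Ecal$. The simplest repair is to handle $\alpha=0$ by the paper's route (shift map, union with the mirrored set, ergodic theorem, and $\dim(A\cup B)=\max(\dim A,\dim B)$), which treats all $\alpha\ge0$ uniformly and never needs the auxiliary claim about $\Ecal_0\setminus\Ecal$; alternatively, one could try to make your permutation argument tolerate the bounded additive perturbation at $\alpha=0$ directly, but that is exactly the point where a new idea is required and none is supplied.
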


\subsection{Upper bound on Hausdorff dimension of $\Ecal_0$}\label{subsec:UBHD}

The following definitions are more or less standard in the noise sensitivity literature. For a function $f:\{-1,1\}^\N\to\R$, we say that $m\in\N$ is \emph{pivotal} for $f$ if
\[f(X_1,\ldots,X_{m-1},X_m,X_{m+1},X_{m+2},\ldots) \neq f(X_1,\ldots,X_{m-1},-X_m,X_{m+1},X_{m+2},\ldots).\]
Of course this definition depends on the realisation of $X_1,X_2,\ldots$, although we note that it is independent of the value of $X_m\in\{-1,1\}$. For an event $E$, we say that $m$ is pivotal for $E$ if $m$ is pivotal for the indicator function of $E$. We define the \emph{influence} of the $m$th bit (on $E$) to be
\[\mathcal I_m(E) = \P(m \text{ is pivotal for } E)\]
and the \emph{total influence} of $E$ to be
\[\mathcal I(E) = \sum_{m=1}^\infty \mathcal I_m(E).\]

For technical reasons, we will need the following generalisations of $P_n$ and $T$. For $k\in 2\Z_+$, define the event
\[P_{k,n} = \{Z_k=0, \, Z_i > 0 \,\,\forall i=k+1,\ldots,k+n\}\]
and let
\[T'_k = \{t\in[0,1] : Z_k(t)=0,\, Z_i(t)>0 \,\,\,\,\forall i=k+1,k+2,\ldots\}.\]

Our next lemma is just a rephrasing of \cite[Theorem 8.1]{schramm_steif:noise_sens_percolation} into our setting, and gives us a condition for bounding the Hausdorff dimension of $T'_k$ in terms of the total influence of $P_{k,n}$.

\begin{lem}\label{Hdimupper}
The Hausdorff dimension of $T'_k$ is almost surely at most
\[\liminf_{n\to\infty} \Big(1-\frac{\log \P(P_{k,n})}{\log\mathcal I(P_{k,n})}\Big)^{-1}.\]
\end{lem}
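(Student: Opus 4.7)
The plan is to recognize this lemma as a direct notational rephrasing of Theorem 8.1 of \cite{schramm_steif:noise_sens_percolation}, which bounds the Hausdorff dimension of the set of exceptional times for a decreasing sequence of events on finitely many independent dynamical bits in terms of their probabilities and total influences. The strategy is to set $A_n := P_{k,n}$ and apply that theorem directly.

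First I would verify the hypotheses. Each $A_n = P_{k,n}$ depends only on the finitely many bits $X_1,\ldots,X_{k+n}$, and the sequence is nested, $A_{n+1}\subset A_n$, because $\{Z_i > 0\,\,\forall i\in[k+1,k+n+1]\}\subset \{Z_i > 0\,\,\forall i\in[k+1,k+n]\}$. Defining $T'_{k,n} := \{t\in[0,1] : A_n(t)\text{ holds}\}$, the family is decreasing in $n$, and each $T'_{k,n}$ is almost surely a finite union of intervals since $A_n$ involves only finitely many Poisson-driven bits. Moreover $T'_k = \bigcap_{n\ge 1} T'_{k,n}$ almost surely: the inclusion $\subset$ is by definition of $T'_k$, while $\supset$ follows because if $P_{k,n}(t)$ holds for every $n$ then $Z_k(t)=0$ and $Z_i(t) > 0$ for every $i > k$. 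Applying Theorem 8.1 of \cite{schramm_steif:noise_sens_percolation} with this choice of $A_n$ then yields exactly
\[\dim_H(T'_k) \le \liminf_{n\to\infty}\Big(1 - \frac{\log\P(P_{k,n})}{\log\mathcal{I}(P_{k,n})}\Big)^{-1}\quad\text{almost surely.}\]

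The main anticipated obstacle is that Theorem 8.1 of \cite{schramm_steif:noise_sens_percolation} is stated in the specific language of dynamical percolation crossings, where the underlying Boolean function is monotone. I would therefore need to check that the argument is insensitive to this monotonicity, since $P_{k,n}$ is not monotone in the $X_j$ (the walk $Z$ is built from products of the bits). Inspection of the proof shows that its essential inputs are only (i) the first-moment identity $\E[\text{Leb}(T'_{k,n})] = \P(P_{k,n})$ via Fubini and stationarity, and (ii) the Fourier two-point correlation formula
\[\P\big(P_{k,n}(0)\cap P_{k,n}(t)\big) - \P(P_{k,n})^2 = \sum_{S\neq\emptyset} e^{-|S| t}\hat{f}(S)^2, \qquad f := \ind_{P_{k,n}},\]
together with the Parseval-type identity $\sum_{S\neq\emptyset}|S|\hat f(S)^2 = \mathcal{I}(P_{k,n})$; none of these requires monotonicity. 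Feeding them into the standard Frostman/energy-method covering argument of \cite{schramm_steif:noise_sens_percolation}, applied to measures on $\bigcap_n T'_{k,n}$, then produces the displayed upper bound with no further modification. This verification is routine, so the substantive content of the lemma is genuinely the translation of conventions.
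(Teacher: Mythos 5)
Your proposal is correct and follows essentially the same route as the paper: Lemma \ref{Hdimupper} is proved there simply by citing the second part of \cite[Theorem 8.1]{schramm_steif:noise_sens_percolation} and noting that $P_{k,n}$ depends only on the finitely many bits $X_1,\ldots,X_{n+k}$. Your extra discussion of monotonicity is harmless but not needed, since that theorem is stated for general sequences of events depending on finitely many bits rather than only for monotone percolation crossing events.
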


\begin{proof}
This is almost exactly the second part of the statement of \cite[Theorem 8.1]{schramm_steif:noise_sens_percolation} translated into our notation. There is an extra condition that the events $P_{k,n}$ must depend only on finitely many random variables, but this is clearly satisfied since $P_{k,n}$ depends only on $X_1,\ldots,X_{n+k}$.
\end{proof}

To implement Lemma \ref{Hdimupper} we now need an upper bound on the influences of $P_n$.

\begin{prop}\label{influenceprop}
For any $m=1,2,\ldots,n$, we have
\[\mathcal I_m(P_n) \asymp \frac{n-m+1}{n^{3/2}}.\]
\end{prop}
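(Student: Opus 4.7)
The plan is to exploit the fact that flipping $X_m$ negates $\prod_{j=1}^k X_j$ for every $k\ge m$, so that the walk $Z^{(m)}$ obtained from $X$ by flipping $X_m$ agrees with $Z$ on $\{0,\ldots,m-1\}$ and satisfies $Z^{(m)}_i = 2Z_{m-1}-Z_i$ for $i\ge m$. In other words, flipping $X_m$ reflects the post-$(m{-}1)$ path about $Z_{m-1}$. A short symmetry argument (using that $X\overset{d}{=}X^{(m)}$) then gives
\[\mathcal I_m(P_n) = 2\,\P\bigl(Z_i>0\ \forall i\in\{1,\ldots,n\},\ \max_{j\in\{m,\ldots,n\}} Z_j \ge 2Z_{m-1}\bigr).\]
Next I would condition on $Z_{m-1}=z$ and apply a Markov-type property. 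The key observation is that for $k\ge m$ the increment factors as $\prod_{j=1}^k X_j = \sigma\,\prod_{j=m}^k X_j$ with $\sigma := \prod_{j=1}^{m-1} X_j\in\{\pm 1\}$, and a switch walk satisfies $W\overset{d}{=}-W$ (flipping $X_1$ negates every $\prod_{j=1}^k X_j$). Hence, conditional on $(Z_0,\ldots,Z_{m-1})$, the continuation $(Z_{m-1+l}-Z_{m-1})_{l\ge 0}$ is distributed as a fresh independent switch walk regardless of $\sigma$. Writing $N:=n-m+1$, this yields
\[\mathcal I_m(P_n) = 2\sum_{z\ge 1} p_{m-1}(z)\,q_N(z),\]
where $p_{m-1}(z):=\P(P_{m-1},\,Z_{m-1}=z)$ and $q_N(z):=\P_z(W_l\ge 1\ \forall l\le N,\ \max_{l\le N} W_l\ge 2z)$ for an independent switch walk $W$.

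For $p_{m-1}(z)$, the ballot/cycle lemma gives $p_{m-1}(z)=\tfrac{z}{m-1}\P(Z_{m-1}=z)$, and Lemma~\ref{LCLT} then yields $p_{m-1}(z)\asymp (z/m^{3/2})e^{-z^2/(2m)}$ in the bulk range $z\lesssim m^{3/4}$, with Gaussian tails handled by Lemma~\ref{chernoff}. For $q_N(z)$, I would use the identity
\[q_N(z) = \P_z\bigl(W_l\ge 1\ \forall l\le N\bigr) - \P_z\bigl(1\le W_l\le 2z{-}1\ \forall l\le N\bigr).\]
Lemma~\ref{refl} reduces the first term to $\P(W_N\in[-z+1,z])$, of order $z/\sqrt N$ for $z\lesssim\sqrt N$ by Lemma~\ref{LCLT} and at most $2e^{-z^2/(2N)}$ otherwise by Lemma~\ref{chernoff}. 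The strip-confinement term I would bound by iterating a CLT estimate over $\lfloor N/z^2\rfloor$ blocks of length $z^2$, each carrying a positive probability of exit from $(-z,z)$; this gives $\lesssim e^{-cN/z^2}$, which is negligible against $z/\sqrt N$ whenever $z\le c'\sqrt N$ for small $c'$. The matching lower bound in this range comes from a gambler's-ruin argument: starting at $z$, the walk hits $2z$ before $0$ with probability $1/2$ by optional stopping and does so within time $\le Cz^2$ with positive probability, after which the strong Markov property together with Corollary~\ref{reflcor} gives the continuation from $2z$ probability $\asymp 2z/\sqrt N$ of staying $\ge 1$ for the remaining $N-O(z^2)$ steps.

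Combining the two estimates by Gaussian integration,
\[\sum_{z\ge 1}p_{m-1}(z)\,q_N(z) \asymp \frac{1}{m^{3/2}\sqrt N}\sum_{z\ge 1} z^2\exp\Bigl(-\frac{z^2}{2m}-\frac{cz^2}{N}\Bigr) \asymp \frac{1}{m^{3/2}\sqrt N}\Bigl(\frac{mN}{m+N}\Bigr)^{3/2} = \frac{N}{(m+N)^{3/2}},\]
and since $m+N=n+1\asymp n$ this gives $\mathcal I_m(P_n)\asymp (n-m+1)/n^{3/2}$, as required. The main obstacle is the tight two-sided control of $q_N(z)$, particularly the lower bound in the transitional regime $z\asymp\sqrt N$ that dominates when $m\gtrsim n/2$: there, neither the ``$z/\sqrt N$'' bound nor the Chernoff exponential is tight on its own, and one must simultaneously lower-bound the probability of hitting $2z$ and the conditional probability of remaining positive afterwards. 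Tracking the parity constraints on $z$ and the uniformity of constants across the regimes $m\le n/2$ and $m>n/2$ is the remaining technical nuisance, but is routine once the estimate on $q_N(z)$ is in place.
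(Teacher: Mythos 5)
Your reduction is exactly the paper's: pivotality of $m$ together with $P_n$ is equivalent to the walk staying positive while the post-$(m-1)$ portion reaches $2Z_{m-1}$, the factor $2$ comes from independence of pivotality from $X_m$, and conditioning on $Z_{m-1}=z$ with the ballot theorem gives the same master formula $\mathcal I_m(P_n)=2\sum_z\tfrac{z}{m-1}\P(Z_{m-1}=z)\,q_N(z)$ with $N=n-m+1$ (your justification that the continuation is conditionally a fresh switch walk is correct; the paper uses this implicitly). Where you genuinely diverge is in estimating $q_N(z)=\P_z(L\cap U)$: the paper gets the lower bound from the FKG inequality applied to the compass walk, $\P_z(L\cap U)\ge\P_z(L)\P_z(U)$, restricted to $z\lesssim\sqrt{m}\wedge\sqrt{N}$, and the upper bound by splitting the sum in $z$ and using $\P_z(L)$ or $\P_z(U)$ as appropriate; you instead use the exact identity $q_N(z)=\P_z(L)-\P_z(\text{confined to }(0,2z))$ with a block/Chernoff bound $e^{-cN/z^2}$ on confinement. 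That route does work for the lower bound: for $z\le c'(\sqrt m\wedge\sqrt N)$ with $c'$ small the confinement term is at most half of $\P_z(L)\asymp z/\sqrt N$, and summing over just this range already gives $\gtrsim(n-m+1)/n^{3/2}$ in both regimes $m\le n/2$ and $m>n/2$, so your gambler's-ruin refinement at $z\asymp\sqrt N$ is not actually needed.

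There is, however, one concrete wrong step on the upper-bound side. You claim $\P(W_N\in[-z+1,z])$ is ``at most $2e^{-z^2/(2N)}$'' for $z\gtrsim\sqrt N$ by Lemma \ref{chernoff}; this is false---that probability increases to $1$ as $z$ grows, since Chernoff controls the tail $\P(W_N\ge z)$, not the bulk. Hence the factor $e^{-cz^2/N}$ in your Gaussian-integration display is unjustified as written, and it is essential: when $m>n/2$, using only $q_N(z)\lesssim(z/\sqrt N)\wedge 1$ would give an upper bound of order $n^{-1/2}$, far larger than $(n-m+1)/n^{3/2}$. The decay for $z\gg\sqrt N$ must come from the hitting requirement, not from positivity: $q_N(z)\le\P_z\bigl(\max_{l\le N}W_l\ge 2z\bigr)=1-\P(W_N\in[-z+1,z])\le 2\P(W_N\ge z)\le 2e^{-z^2/(2N)}$ by Lemmas \ref{refl} and \ref{chernoff}, which is precisely the paper's bound on $\P_z(U)$. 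With that substitution (and reading your final ``$\asymp$'' as upper and lower bounds with different constants in the exponent, which does not affect the order), your argument goes through and recovers the stated asymptotics.
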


This result will be proved in Section \ref{influencesec}. Combining Proposition \ref{influenceprop} with Lemma \ref{Hdimupper} will give us the upper bound of $1/2$ on the Hausdorff dimension of $T^0$ and hence $\Ecal$. We carry out the details in Section \ref{sec:completethm1proof}.

\subsection{$\Ecal_\alpha$ is empty for $\alpha>1/2$}\label{subsec:alphabig}

The final part of Theorem \ref{mainthm} says that $\Ecal_\alpha$ is empty almost surely when $\alpha>1/2$. The proof of this fact follows a fairly standard argument. For $\alpha,t\ge0$ and $n\in\N$ define the event $L^\alpha_n(t) = \{Z_n(t) \ge n^\alpha\}$, and for $k\in\N$ let $\mathcal L^\alpha_n(k) = \int_0^k \ind_{L^\alpha_n(t)} \d t$. Note that
\begin{equation}\label{trivub}
\P(\mathcal L^\alpha_n(1) > 0) \le \P(\mathcal L^\alpha_n(1) > 0)\frac{\E[\mathcal L^\alpha_n(2)]}{\E[\mathcal L^\alpha_n(2) \ind_{\{\mathcal L^\alpha_n(1) > 0\}}]} = \frac{\E[\mathcal L^\alpha_n(2)]}{\E[\mathcal L^\alpha_n(2) \,|\, \mathcal L^\alpha_n(1) > 0]}.
\end{equation}
By Fubini's theorem and stationarity,
\[\E[\mathcal L^\alpha_n(2)] = \int_0^2 \P(Z_n(t) \ge n^\alpha) \d t = 2\P(Z_n \ge n^\alpha).\]
By Markov's inequality, for any $\lambda>0$,
\[\P(Z_n \ge n^\alpha) = \P(\exp(\lambda Z_n) \ge \exp(\lambda n^\alpha)) \le \E[\exp(\lambda Z_n)]\exp(-\lambda n^{\alpha}).\]
Since $Z_n$ is a sum of $n$ independent and identically distributed random variables,
\[\E[\exp(\lambda Z_n)] = \E[\exp(\lambda Z_1)]^n = (e^\lambda/2 + e^{-\lambda}/2)^n.\]
When $\lambda$ is small we have $e^\lambda/2 + e^{-\lambda}/2 \le 1+3\lambda^2/4$, so fixing $\alpha\in(1/2,1)$ and choosing $\lambda = n^{\alpha-1}$, for large $n$ we have
\[\E[\exp(\lambda Z_n)] \le \Big(1+\frac{3}{4}\lambda^2\Big)^n = \Big(1 + \frac{3}{4}n^{2\alpha-2}\Big)^n \le \exp\Big(\frac{3}{4}n^{2\alpha-1}\Big).\]
Thus, again with $\alpha\in(1/2,1)$ and $\lambda = n^{\alpha-1}$, for large $n$,
\begin{equation}\label{trivub1}
\E[\mathcal L^\alpha_n(2)] = 2\P(Z_n \ge n^\alpha) \le 2\exp\Big(\frac{3}{4}n^{2\alpha-1}\Big)\exp(-n^{2\alpha-1}) = 2\exp(-n^{2\alpha-1}/4).
\end{equation}

On the other hand, letting $T = \inf\{t\ge 0 : Z_n(t) \ge n^\alpha\}$,  we have
\[\E[\mathcal L^\alpha_n(2) \,|\, \mathcal L^\alpha_n(1) > 0] \ge \E\Big[\int_T^{T+1} \ind_{L^\alpha_n(t)} \d t \,\Big|\, \mathcal L^\alpha_n(1) > 0\Big].\]
Let $T' = \inf\{t\ge T : \text{one of the first $n$ steps rerandomises}\}$. Then clearly, provided $T<\infty$,
\[\int_T^{T+1} \ind_{L^\alpha_n(t)} \d t \ge (T'-T)\wedge 1.\]
However, by the strong Markov property, $T'-T$ is exponentially distributed with parameter $n$. Thus
\[\E\Big[\int_T^{T+1} \ind_{L^\alpha_n(t)} \d t \,\Big|\, \F_T \Big] \ge \E[(T'-T)\wedge 1] = \int_0^1 s\cdot ne^{-ns} \d s \ge \int_0^{1/n} ns e^{-ns} \d s \ge \frac{1}{2en}.\]
Thus
\[\E[\mathcal L^\alpha_n(2) \,|\, \mathcal L^\alpha_n(1) > 0] \ge \frac{1}{2en}.\]
Combining this with \eqref{trivub} and \eqref{trivub1}, for any $\alpha\in(1/2,1)$ we have
\[\P(\mathcal L^\alpha_n(1) > 0) \le 2\exp(-n^{2\alpha-1}/4)\cdot 2en.\]
By the Borel-Cantelli lemma, for any $\alpha\in(1/2,1)$, the probability that for infinitely many $n$, there exists a time in $[0,1]$ such that $L^\alpha_n(t)$ occurs, is zero. Thus $\Ecal_\alpha$ is empty almost surely. Since $\Ecal_{\alpha'}\subset \Ecal_\alpha$ for any $\alpha'\ge \alpha$, we also deduce the same for $\alpha\ge 1$.

\subsection{Completing the proof of Theorem \ref{mainthm}}\label{sec:completethm1proof}

We now tie together the results from Sections \ref{subsec:LBHD}, \ref{subsec:UBHD} and \ref{subsec:alphabig} to complete the proof of Theorem \ref{mainthm}.

\begin{proof}[Proof of Theorem \ref{mainthm}]
We showed in Section \ref{subsec:alphabig} that $\Ecal_\alpha$ is empty almost surely for $\alpha>1/2$, so it remains to show that the Hausdorff dimension of $\Ecal_\alpha$ is $1/2$ for any $\alpha\in[0,1/2)$. As stated at the beginning of Section \ref{outlineproof}, it suffices to show that the Hausdorff dimension of $\Ecal_\alpha$ is at least $1/2$ for $\alpha>0$ and the Hausdorff dimension of $\Ecal_0$ is at most $1/2$.

By Lemma \ref{Hdimlower} and Proposition \ref{EPhifinite}, we know that for any $\alpha,\gamma\in[0,1/2)$, the Hausdorff dimension of $\bigcap_n \bar T^\alpha_n$ is at least $\gamma$ with strictly positive probability. By Lemma \ref{Ttechnicality}, the same holds for $T^\alpha$, and since $T^\alpha\subset\Ecal_\alpha$, the same holds for $\Ecal_\alpha$. Lemma \ref{ergodic} then tells us that the Hausdorff dimension of $\Ecal_\alpha$ must be at least $1/2$ almost surely.

Moving on to the upper bound, take $k\in 2\Z_+$ and $m\in\{k+1,k+2,\ldots,k+n\}$. If $Z_k\neq 0$ then $m$ cannot be pivotal for $P_{k,n}$, so
\[\mathcal I_m(P_{k,n}) = \P(Z_k=0, \,\, m \text{ is pivotal for } P_{k,n}) = \P(Z_k=0)\P(m \text{ is pivotal for } P_{k,n}\,|\,Z_k=0).\]
But by the Markov property,
\[\P(m \text{ is pivotal for } P_{k,n}\,|\,Z_k=0) = \P(m-k \text{ is pivotal for } P_n) = \mathcal I_{m-k}(P_n).\]
Thus
\[\mathcal I(P_{k,n}) = \sum_{m=1}^{k} \mathcal I_m(P_{k,n}) + \sum_{m=k+1}^{k+n} \mathcal I_m(P_{k,n}) \le k + \P(Z_k=0) \sum_{m=1}^n \mathcal I_m(P_n),\]
and so, applying Proposition \ref{influenceprop},
\begin{equation}\label{IPnk}
\mathcal I(P_{k,n}) \lesssim k+ \frac{\P(Z_k=0)}{n^{3/2}}\sum_{m=1}^n (n-m+1) \asymp k+\P(Z_k=0) n^{1/2}.
\end{equation}

By the Markov property
\[\P(P_{k,n}) = \P(Z_k=0)\P(Z_i>0 \,\,\forall i=k+1,k+2,\ldots,k+n \,|\,Z_k=0) = \P(Z_k=0)\P(P_n),\]
and by Corollary \ref{reflcor} we have $\P(P_n)\asymp n^{-1/2}$. Combining this with \eqref{IPnk}, we see that there exist constants $c,c'\in(0,\infty)$ such that
\[\frac{-\log \P(P_{k,n})}{\log \mathcal I(P_{k,n})} \ge \frac{\frac12 \log n - \log c - \log \P(Z_k=0)}{\frac12 \log n + \log c' + \log(\P(Z_k=0)+kn^{-1/2})},\]
which converges to $1$ as $n\to\infty$ for each fixed $k$. From Lemma \ref{Hdimupper} we obtain that the Hausdorff dimension of $T_k'$ is almost surely at most $(1+1)^{-1} = 1/2$.

Finally,
\[\Ecal_0 = \{t\in[0,1] : \liminf_{n\to\infty} Z_n(t)>0 \} = \bigcup_k T_k'\]
which as a countable union of sets of Hausdorff dimension at most $1/2$ almost surely, itself has Hausdorff dimension at most $1/2$ almost surely. This completes the proof.
\end{proof}

\section{Proof of Proposition \ref{EPhifinite}: bounding $\E[\Phi^\alpha_n(\gamma)]$ from above}\label{EPhisec}

First note that, by Fubini's theorem,
\begin{align*}
\E[ \Phi^\alpha_n(\gamma)] &= \frac{1}{\P(P_n^\alpha)^2}\E\Big[\int_0^1 \int_0^1 \frac{\ind_{P_n^\alpha(s)\cap P_n^\alpha(t)}}{|t-s|^\gamma} \d s\, \d t\Big]\\
&= \frac{1}{\P(P_n^\alpha)^2}\int_0^1 \int_0^1 \frac{\P(P_n^\alpha(s) \cap P_n^\alpha(t))}{|t-s|^\gamma} \d s \, \d t.
\end{align*}
By stationarity, this is bounded above by
\[\frac{2}{\P(P_n^\alpha)^2} \int_0^1 \frac{\P(P_n^\alpha(0) \cap P_n^\alpha(t))}{t^\gamma} \d t,\]
and since $P_n^\alpha(u) \subset P_n(u)$ for any $\alpha,u\ge0$, this is at most
\[\frac{2}{\P(P_n^\alpha)^2} \int_0^1 \frac{\P(P_n(0) \cap P_n(t))}{t^\gamma} \d t.\]

The following lemma says that the probability of $P_n^\alpha$ is of the same order as the probability as $P_n$. It is a simple application of \cite[Theorem 2]{ritter:growth_RW_cond_positive} and we will prove it later in this section.

\begin{lem}\label{Pnalphaasymp}
For any $\alpha<1/2$,
\[\P(P_n^\alpha)\asymp \frac{1}{\sqrt n}.\]
\end{lem}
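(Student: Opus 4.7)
The upper bound is immediate: since $Z_i$ is integer-valued and $i^\alpha \ge 1$ whenever $i \ge 1$ and $\alpha \ge 0$, we have $P_n^\alpha \subset P_n$, so $\P(P_n^\alpha) \le \P(P_n) \lesssim n^{-1/2}$ by Corollary~\ref{reflcor}.

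For the matching lower bound, my plan is to force the walk up to a large constant height on a short initial window, and then use Ritter's theorem to control the remainder. Fix a large integer $k_0$ (to be tuned below). On the event $A = \{X_1 = \cdots = X_{k_0} = 1\}$, which has probability $2^{-k_0}$, the walk climbs deterministically with $Z_i = i$ for $i \le k_0$, so $Z_i \ge i^\alpha$ holds automatically throughout this window. The Markov property at time $k_0$ then gives
\[\P(P_n^\alpha) \;\ge\; 2^{-k_0}\, \P_{k_0}\!\Big(Z_i \ge (k_0+i)^\alpha \,\,\forall\, i = 1,\ldots,n-k_0\Big),\]
and since $(k_0+i)^\alpha \ge 1$, the event on the right automatically forces $Z_i > 0$ throughout. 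I would then factor the right-hand probability as the product
\[\P_{k_0}\!\big(Z_i > 0 \,\,\forall\, i \le n-k_0\big) \cdot \P_{k_0}\!\big(Z_i \ge (k_0+i)^\alpha \,\,\forall\, i \le n-k_0 \,\big|\, Z_i > 0 \,\,\forall\, i \le n-k_0\big).\]

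The first factor is of order $n^{-1/2}$: this is a minor variant of Corollary~\ref{reflcor} for a walk starting at the constant height $k_0$, readily verified via Lemma~\ref{refl} and Lemma~\ref{LCLT}. For the second factor, I would invoke Ritter's Theorem~2. It provides a lower envelope for SSRW conditioned to stay positive: for any $\beta \in (\alpha,1/2)$, on an event whose conditional probability is bounded below uniformly in the horizon, the walk satisfies $Z_i \gtrsim i^\beta$. Translating this to a walk started at height $k_0$, and choosing $k_0$ large enough, the envelope $(k_0+i)^\alpha$ is dominated uniformly in $i \ge 1$: the bottleneck occurs around $i \asymp k_0^2$, where the walk is of order $k_0$ while the envelope is of order $k_0^{2\alpha}$, and the assumption $\alpha < 1/2$ gives the walk room to win. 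Consequently the second factor is bounded below by a positive constant independent of $n$, and multiplying the two factors yields $\P(P_n^\alpha) \gtrsim n^{-1/2}$.

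The hard part will be extracting the precise finite-horizon, uniform-in-$n$ envelope bound from Ritter's Theorem~2, and adapting it to a walk started at a constant height rather than $0$. Both are essentially routine once one unpacks Ritter's formulation, but require some care to state in a form that plugs directly into the factorisation above.
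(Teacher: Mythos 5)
Your proposal is correct and is essentially the paper's proof: the upper bound via $P_n^\alpha\subset P_n$ together with Corollary \ref{reflcor}, and the lower bound obtained by paying $2^{-k}$ to force an initial deterministic climb and then invoking Ritter's Theorem 2, are exactly the two ingredients used there. The only real difference is that the paper dispenses with your conditional factorisation (the step you defer as ``the hard part''): after the climb it compares envelopes directly, taking $\alpha'\in(\alpha,1/2)$ and $\delta>0$ from Ritter so that $\P(Z_i\ge\delta i^{\alpha'}\ \forall i\le n)\ge\P(P_n)/2$, choosing $k$ with $\delta i^{\alpha'}\ge i^\alpha$ for all $i\ge k$, and noting $\delta(i+k)^{\alpha'}-k\le\delta i^{\alpha'}$, which yields $\P(P_n^\alpha)\ge 2^{-(k+1)}\P(P_n)$ with no need for any statement about walks conditioned to stay positive or started from a constant height.
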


We now want to bound $\P(P_n(0) \cap P_n(t))$. As suggested in the sketch proof in Section \ref{sketch_sec}, the main idea is that on even periods two mirrored random walks (representing the walk at time $0$ and time $t$) must both be larger than $0$. The difficulty is in handling the dependencies between periods, and for this we need some more definitions.

For each $j\ge 1$, define the event
\[A_j(t) = \{Z_i(0)>0 \text{ and } Z_i(t)>0 \,\,\,\, \forall i\in[I_{j-1}(t), I_j(t)-1]\},\]
which says that our dynamical random walk is positive throughout the $j$th period at both time $0$ and time $t$ (recall the terminology from Section \ref{sketch_sec}). For each $i\ge 0$, let
\[W_i(t) = \frac{Z_i(0)+Z_i(t)}{2}.\]
Note that, for each $t$, during odd periods the increments of $W_i(t)$ are equal to the increments of $Z_i(0)$; and during even periods, $W_i(t)$ is constant. (When we talk about increments we mean as $i$ changes, keeping $t$ fixed.)

When $j$ is odd, define the event
\[A'_j(t) = \{W_i(t)>0 \,\,\,\, \forall i\in[I_{j-1}(t),I_j(t)-1]\}.\]
Note that, since $W_i(t)$ is the average of $Z_i(0)$ and $Z_i(t)$, if both of these are positive, then so is $W_i(t)$. That is, if $j$ is odd, then $A_j(t) \subset A'_j(t)$.

Making the same comparison when $j$ is even would not be useful since $W$ is constant. Instead, when $j$ is even, let $B^{(j)}_i(t)$, $i\ge0$ be an independent simple random walk started from $W_{I_{j-1}(t)-1}(t)$ and define
\[A'_j(t) = \{B^{(j)}_i(t) \in (0,2W_{I_{j-1}(t)-1}(t)) \,\,\,\, \forall i\in[1,J_j(t)]\}.\]
Figure \ref{4walks} shows a realisation of $Z(0)$, $Z(t)$, $W(t)$, $B^{(2)}(t)$ and $B^{(4)}(t)$.

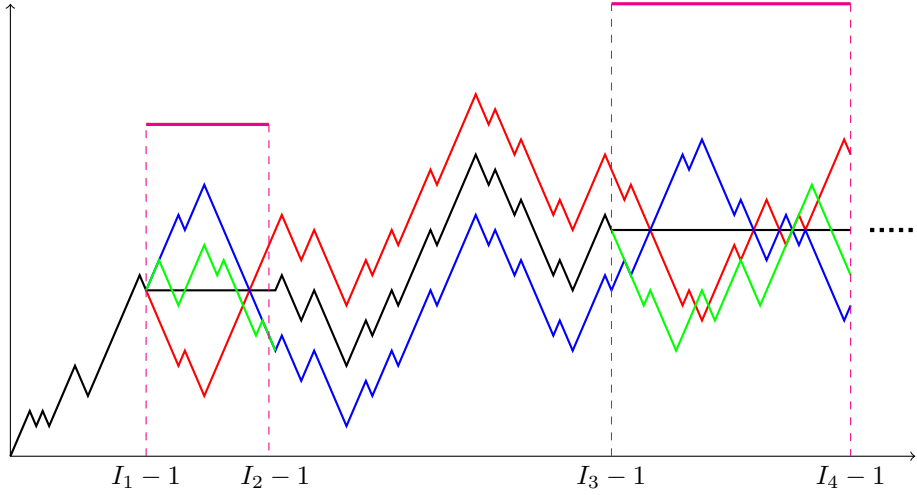
\begin{figure}
\centering
\begin{tikzpicture}[yscale=0.2, xscale=0.085]
\draw [<->] (0,30) -- (0,0) -- (140,0);
\draw [thick] (0,0) -- (3,3) -- (4,2) -- (5,3) -- (6,2) -- (10,6) -- (12,4) -- (20,12) -- (21,11) -- (21,11) -- (41,11) -- (42,12) -- (45,9) -- (47,11) -- (52,6) -- (55,9) -- (56,8) -- (59,11) -- (60,10) -- (65,15) -- (66,14) -- (72,20) -- (74,18) -- (75,19) -- (78,16) -- (79,17) -- (84,12) -- (85,13) -- (87, 11) -- (92, 16) -- (93,15) -- (130,15);
\draw[thick, red] (21,11) -- (26,6) -- (27,7) -- (30,4) -- (35,9) -- (41,15) -- (42,16) -- (45,13) -- (47,15) -- (52,10) -- (55,13) -- (56,12) -- (59,15) -- (60,14) -- (65,19) -- (66,18) -- (72,24) -- (74,22) -- (75,23) -- (78,20) -- (79,21) -- (84,16) -- (85,17) -- (87, 15) -- (92, 20) -- (93,19) -- (95,17) -- (96,18) -- (104,10) -- (105,11) -- (107,9) -- (112,14) -- (113,13) -- (117,17) -- (120,14) -- (122,16) -- (123,15) -- (129,21) -- (130,20);
\draw[thick, blue] (21,11) -- (26, 16) -- (27,15) -- (30,18) -- (35,13) -- (41,7) -- (42,8) -- (45,5) -- (47,7) -- (52,2) -- (55,5) -- (56,4) -- (59,7) -- (60,6) -- (65,11) -- (66,10) -- (72,16) -- (74,14) -- (75,15) -- (78,12) -- (79,13) -- (84,8) -- (85,9) -- (87, 7) -- (92, 12) -- (93,11) -- (95,13) -- (96,12) -- (104,20) -- (105,19) -- (107,21) -- (112,16) -- (113,17) -- (117,13) -- (120,16) -- (122,14) -- (123,15) -- (129,9) -- (130,10);
\draw [thick, green] (21,11) -- (23,13) -- (26,10) -- (30,14) -- (32,12) -- (33,13) -- (38,8) -- (39,9) -- (41,7); 
\draw [thick, green] (93,15) -- (98,10) -- (99,11) -- (103,7) -- (107,11) -- (109,9) -- (113,13) -- (116,10) -- (124,18) -- (130,12);
\draw[magenta, dashed] (21,0) -- (21,22); 
\draw[magenta, dashed] (40,22) -- (40,0);
\draw[very thick, magenta] (21,22) -- (40,22);
\draw[magenta, dashed] (93,0) -- (93,30);
\draw[magenta, dashed] (130,30) -- (130,0);
\draw[very thick, magenta] (93,30) -- (130,30);
\draw [dotted, ultra thick] (133,15) -- (140,15);
\node[below] at (21,0) {$I_1-1$};
\node[below] at (41,0) {$I_2-1$};
\node[below] at (93,0) {$I_3-1$};
\node[below] at (130,0) {$I_4-1$};
\end{tikzpicture}
\caption{A realisation of $Z(0)$ and $Z(t)$ (blue/red), $W(t)$ (black), $B^{(2)}(t)$ and $B^{(4)}(t)$ (both green) for the first four periods.\label{4walks}}
\end{figure}

We need to rule out some unlikely events. Let
\[\odd = \{J_3(t) + J_5(t) + \ldots + J_{2\lfloor nt/8\rfloor+1}(t) \ge n/8\},\]
\[\even = \{J_2(t) + J_4(t) + \ldots + J_{2\lfloor nt/8\rfloor}(t) \ge n/8\},\]
\[E_n(t) = \odd\cap\even \,\,\,\, \text{ and } \,\,\,\, E'_n(t) = \{I_{2\lfloor nt/8\rfloor+1}(t)\le n\}.\]
We note that for each $j$, when $t$ is small $J_j(t)$ has expectation roughly $2/t$, so when $n$ is large the above events should all occur with probability close to $1$. The following lemma, which we prove later in the section, quantifies this more precisely.

\begin{lem}\label{Eproblarge}
There exists a constant $\delta>0$ such that for any $t\in[0,1]$ and $n\in\N$,
\[\P(E_n(t)^c) + \P(E'_n(t)^c) \le \exp(-\delta nt).\]
\end{lem}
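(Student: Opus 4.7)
The plan is to reduce all three events to standard Chernoff bounds for sums of independent Geometric random variables. The key structural observation is that each period length $J_k(t) := I_k(t) - I_{k-1}(t)$ is iid Geometric with parameter $p_t := (1-e^{-t})/2$: independently across indices $i$, the event $\{X_i(t) \ne X_i(0)\}$ has probability $(1-e^{-t})/2$ (the Poisson clock must fire on $[0,t]$ \emph{and} the resampled value must differ). On $[0,1]$ we have $p_t \asymp t$, with Taylor expansion $p_t = t/2 + O(t^2)$, so $1/p_t \asymp 1/t$.

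The main tool is the standard identity that if $J_1,\ldots,J_m$ are iid Geometric$(p)$, then
\[
\P(J_1 + \cdots + J_m \le n) = \P(\mathrm{Bin}(n, p) \ge m),
\]
obtained by identifying the left-hand side with the probability that a Bernoulli$(p)$ sequence has at least $m$ successes in the first $n$ trials. For $\P(E'_n(t)^c) = \P(I_M(t) > n)$ with $M := 2\lfloor nt/8 \rfloor + 1$, the identity gives $\P(\mathrm{Bin}(n, p_t) < M)$, and the gap equals $np_t - M = n[(1-e^{-t})/2 - t/4] + O(1)$. One checks $(1-e^{-t})/2 - t/4 \ge ct$ on $[0,1]$ with $c = (1-2/e)/4 > 0$ (the ratio $((1-e^{-t})/2-t/4)/t$ is decreasing in $t$, so minimized at $t=1$), so the gap is of order $nt$; Chernoff for Binomial lower tails then yields $\exp(-\delta_1 nt)$. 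Similarly, $\P(\odd^c)$ is the probability that a sum of $L := \lfloor nt/8 \rfloor$ iid Geometric$(p_t)$ variables is less than $n/8$, which the identity translates to $\P(\mathrm{Bin}(\lfloor n/8 \rfloor, p_t) \ge L)$: a Binomial upper tail with mean $\asymp nt/16$ against a threshold $L \asymp nt/8$ (twice the mean), giving $\exp(-\delta_2 nt)$ by Chernoff. The event $\even^c$ is handled identically.

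The main obstacle I anticipate is quantitative: ensuring the gap $np_t - M$ is truly of order $nt$ uniformly in $t \in [0,1]$ (rather than $o(nt)$ or even zero) requires a slightly sharper bound on $p_t$ than the naive $p_t \ge t/4$, which just barely fails; the calculus argument above is essential. A secondary, purely cosmetic point is that when $nt$ is below an absolute constant the bound $\exp(-\delta nt)$ is close to $1$ and the Chernoff estimates carry no information; one dispatches this regime by shrinking $\delta$ appropriately, or by observing directly that $\P(J_1 > n) \le (1-p_t)^n \le e^{-nt/4}$ already dominates. Taking $\delta = \min(\delta_1, \delta_2)$ with a small enough constant to absorb the small-$nt$ behaviour completes the proof.
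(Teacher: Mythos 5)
Your proposal is correct and follows essentially the same route as the paper: identify the period lengths as i.i.d.\ Geometric$((1-e^{-t})/2)$ variables, convert the relevant sums into Binomial tail events, and apply Chernoff bounds, exactly as the paper does for $\odd$, $\even$ and $E'_n(t)$. Your explicit gap estimate $(1-e^{-t})/2 - t/4 \ge (1-2/e)t/4$ for the $E'_n(t)$ case is a detail the paper leaves implicit (it merely says the bound is ``very similar''), but the underlying argument is the same.
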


For now we will work on the event $E_n(t)$. Also define, for $k\in\N$,
\[V_k(t) = \bigcap_{j=1}^k A_j(t) \,\,\,\, \text{ and } V'_k(t) = \bigcap_{j=1}^k A'_j(t).\]

Our next result translates the probability that we want to bound, which is that of $V_k(t)$, into probabilities of events involving $W(t)$ and $B^{(j)}(t)$. The probabilities on the right are squared, reflecting the fact that we have two random walks (one at time $0$ and another at time $t$) that must both stay positive. Apart from the first period, which is important to retain separately, only the even periods are included, since they are the ones on which the two random walks are mirrored.

\begin{prop}\label{VtoA'}
For any $k,n\in\N$ with $n\ge 2k$ and any $t\in[0,1]$,
\[\P\big(V_k(t) \cap E_n(t)\big) \le \P\big(A_1'(t)\cap E_n(t)\big)\cdot\prod_{j=1}^{\lfloor k/2\rfloor} \P\big(B^{(2j)}_i(t) > 0 \,\,\,\, \forall i\in[1,J_{2j}(t)] \,\big|\,  V'_{2j-1}(t) \cap E_n(t)\big)^2.\]
\end{prop}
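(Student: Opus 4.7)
The plan is to prove the proposition by iteratively peeling off periods from the last to the first, exploiting a key pointwise inequality on each even period together with a trivial containment on each odd period.

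For the pointwise bound on an even period $j$: the walks $Z(0)$ and $Z(t)$ are mirror images about $w := W_{I_{j-1}-1}(t)$ on this period (their sum equals the constant $2w$), so $A_j$ is equivalent to $\{Z_i(0) \in (0, 2w) \text{ for all } i \in [I_{j-1}, I_j - 1]\}$. Conditioning on a sigma-algebra $\mathcal{G}_{j-1}$ encoding everything up to the end of period $j-1$ together with the length $J_j$, the walk $Z(0)$ on this period is a simple random walk of length $J_j$ starting from $a := Z_{I_{j-1}-1}(0)$. I would apply FKG to the events $\{Z(0) > 0 \text{ throughout}\}$ (increasing in the underlying signs) and $\{Z(0) < 2w \text{ throughout}\}$ (decreasing) to get $\P(A_j \mid \mathcal{G}_{j-1}) \leq \P_a \cdot \P_{2w - a}$, where $\P_x$ denotes the probability that a simple random walk of length $J_j$ starting from $x$ stays positive. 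Using Lemma \ref{refl} to rewrite $\P_x = \P(Z_{J_j} \in [-x+1, x])$ and the unimodality of the law of $Z_{J_j}$ (visible from Lemma \ref{LCLT}), the map $x \mapsto \P_x$ is concave on $x \geq 0$, so by AM--GM $\P_a \cdot \P_{2w-a} \leq \P_w^2$. Since $B^{(j)}$ is by construction an independent random walk starting from $w$, $\P_w = \P(B^{(j)}_i > 0 \,\forall i \in [1, J_j] \mid \mathcal{G}_{j-1})$. For odd periods, $A_j \subseteq A'_j$ is immediate since the average of two positive numbers is positive.

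I would then prove the proposition by induction on $m := \lfloor k/2 \rfloor$, with base case $V_1 = A_1 \subseteq A'_1$. At the inductive step, one writes $V_{2m} = V_{2m-1} \cap A_{2m}$, conditions on $\mathcal{G}_{2m-1}$ to bring in the pointwise bound for the final even period, uses $A_{2m-1} \subseteq A'_{2m-1}$ to replace the preceding odd period, and invokes the independence of each auxiliary walk $B^{(2j)}$ from the rest of the randomness (given its starting value $W_{I_{2j-1}-1}$) to recast the pointwise squared conditional probability in the form $\P(B^{(2m)} > 0 \mid V'_{2m-1} \cap E_n)^2$ appearing in the statement.

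The main obstacle is this last step: a naive Jensen inequality applied to the convex map $x \mapsto x^2$ would send the bound in the wrong direction when translating $\P(B^{(2m)} > 0 \mid \mathcal{G}_{2m-1})^2$ into $\P(B^{(2m)} > 0 \mid V'_{2m-1} \cap E_n)^2$. The fact that $V'_{2m-1}$ is built from the \emph{independent} walks $B^{(2)}, \ldots, B^{(2m-2)}$ rather than from $Z(0), Z(t)$ on those even periods is surely essential. I expect the correct arrangement is to sequence the induction so that the squared conditional probability arises as an exact identity at each stage, for example by pairing each even period with two independent copies of $B^{(2j)}$ and carefully interleaving the peeling of even periods with the progressive swapping of $A_j$ for $A'_j$, thereby avoiding any Jensen-type loss.
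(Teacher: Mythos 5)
Your even-period pointwise bound is a legitimate variant of the paper's: the paper first moves the starting point to the midpoint of the strip using Lemma \ref{starthalfway} (applied to the two-sided strip event, conditionally on the period structure) and only then applies FKG at the midpoint, where the two one-sided factors coincide by symmetry; you instead apply FKG at the true starting point $a$ and then need concavity of $x\mapsto\P_x(\min_{i\le \ell}S_i>0)=\P(Z_\ell\in[-x+1,x])$ together with AM--GM. That concavity is true (the successive differences are point probabilities $\P(Z_\ell=x')$ with $x'$ increasing, and the law of $Z_\ell$ is unimodal), but you should prove it directly from unimodality of the binomial rather than from Lemma \ref{LCLT}, which only gives the point probabilities up to constants. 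The odd-period containment $A_j(t)\subset A'_j(t)$ is as in the paper.

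The genuine gap is exactly the step you flag and then leave unresolved, and it is not a technicality: what your induction produces after conditioning on $\mathcal{G}_{2m-1}$ is a bound by $\E\big[Q(W_{I_{2m-1}(t)-1}(t),J_{2m}(t))^2\,\big|\,\cdot\big]$, where $Q(w,\ell)$ denotes the probability that a walk from $w$ stays positive for $\ell$ steps, and this cannot in general be dominated by $\P\big(B^{(2m)}_i(t)>0\ \forall i\in[1,J_{2m}(t)]\,\big|\,V'_{2m-1}(t)\cap E_n(t)\big)^2=\E\big[Q\,\big|\,\cdot\big]^2$: Jensen gives the reverse inequality, and $Q(W_{I_{2m-1}(t)-1}(t),J_{2m}(t))$ is far from constant under that conditioning. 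Your proposed remedy (two independent copies of $B^{(2j)}$ per period and an interleaved peeling) is not carried out, so the stated inequality is not reached. The paper avoids the problem by never forming a square at the fine-conditioning level. It first proves $\P(V_k(t)\cap E_n(t))\le\P(V'_k(t)\cap E_n(t))$, i.e.\ it swaps in the independent midpoint-started walks $B^{(2j)}$ for \emph{all} even periods at once, conditionally only on $\F_{I(t)}$ (this is where Lemma \ref{starthalfway}, or your concavity step, belongs). It then decomposes $\P(V'_k(t)\cap E_n(t))$ by the chain rule over periods, drops the odd factors $j\ge3$, and only at that stage applies \eqref{fkg2} together with the reflection symmetry of $B^{(2j)}$ about its own starting point, both directly under the conditional measure given $V'_{2j-1}(t)\cap E_n(t)$; the square then arises as the exact identity that the events $\{B^{(2j)}_i(t)>0\ \forall i\in[1,J_{2j}(t)]\}$ and $\{B^{(2j)}_i(t)<2W_{I_{2j-1}(t)-1}(t)\ \forall i\in[1,J_{2j}(t)]\}$ have equal conditional probability, which holds because flipping the increments of $B^{(2j)}$ preserves the law and fixes the conditioning event. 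No passage from $\mathcal{G}$-conditioning to $V'\cap E_n$-conditioning of a squared quantity is ever needed, which is precisely how the Jensen obstruction you identified is sidestepped; as written, your argument establishes the pointwise bound but not the proposition.
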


The proof of this result involves carefully separating out as much independence as possible between the different periods and applying the FKG inequality. Again we postpone the proof to later in the section in order to continue with our overarching proof of Proposition \ref{EPhifinite}.

Next we observe that since $B^{(j)}(t)$ is simply an independent random walk started from $W_{I_{j-1}(t)-1}(t)$, it has the same distribution as $W$ itself over the $(j+1)$th period. This inspires our next proposition, which allows us to telescope the product from Proposition \ref{VtoA'} back into a statement only about $W$.

\begin{prop}\label{telescope}
For any $k,n\in\mathbb{N}$ with $n\ge 2k$ and any $t\in[0,1]$,
\[\prod_{j=1}^k \P\big(B^{(2j)}_i(t) > 0 \,\,\,\, \forall i\in[1,J_{2j}(t)] \,\big|\,  V'_{2j-1}(t) \cap E_n(t)\big) = \frac{\P\big(\bigcap_{j=1}^{k+1} A'_{2j-1}(t) \cap E_n(t)\big)}{\P(A'_1(t)\cap E_n(t))}.\]
\end{prop}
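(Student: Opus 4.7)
I would approach this by conditioning on a carefully chosen skeleton $\sigma$-algebra and using the conditional independence of the auxiliary walks, then telescoping via the distributional identity between $B^{(2j)}$ and $W$ on the subsequent odd period.

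\textbf{Step 1 — skeleton factorization.} Let $\mathcal{H}$ be the $\sigma$-algebra generated by the Poisson processes $(N_i)$ together with all $X_j^i$. Then $\mathcal{H}$ determines the whole $W$-trajectory and every period length $J_k$, while the auxiliary walks $B^{(2i)}$ remain conditionally independent simple random walks from the now-deterministic starting points $w_i := W_{I_{2i-1}-1}(t)$ of deterministic length $J_{2i}$. Writing $p(w,L) := \P(\text{SRW from }w\text{ stays in }(0,2w)\text{ for }L\text{ steps})$ and $f(w,L) := \P(\text{SRW from }w\text{ stays }>0\text{ for }L\text{ steps})$, conditional independence yields
\[\P(V'_{2j-1}\cap E_n \mid \mathcal{H}) = \mathbbm{1}_{E_n \cap \bigcap_{i\le j} A'_{2i-1}} \prod_{i<j} p(w_i, J_{2i}),\]
and the same formula with an extra factor of $f(w_j, J_{2j})$ for $\P(V'_{2j-1}\cap b_{2j}\cap E_n \mid \mathcal{H})$. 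The $j$th factor on the left-hand side of the proposition is therefore the ratio of these two skeletal expectations.

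\textbf{Step 2 — replace $B^{(2j)}$ by $W$ on period $2j+1$.} The observation immediately preceding the proposition says that $B^{(2j)}$ has the same distribution as $W$ on odd period $2j+1$: both are simple random walks from $w_j$ of (random) Geometric length, and $J_{2j},J_{2j+1}$ are i.i.d. Indeed, conditioning out the $W$-increments on period $2j+1$ identifies $f(w_j,J_{2j+1})$ as a conditional probability of $A'_{2j+1}$. The key identity to establish is then
\[\E\!\left[\mathbbm{1}_{\bigcap_{i\le j}A'_{2i-1}\cap E_n}\prod_{i<j}p(w_i,J_{2i})\,f(w_j,J_{2j})\right] = \E\!\left[\mathbbm{1}_{\bigcap_{i\le j+1}A'_{2i-1}\cap E_n}\prod_{i<j}p(w_i,J_{2i})\right].\]
Once this holds, the numerator of the $j$th ratio is recognised as $\P(\bigcap_{i\le j+1} A'_{2i-1}\cap V'_{2j-1}\cap E_n)$, which differs from the denominator of the $(j+1)$th ratio (namely $\P(V'_{2j+1}\cap E_n) = \P(V'_{2j-1}\cap A'_{2j}\cap A'_{2j+1}\cap E_n)$) by precisely the single factor $p(w_j,J_{2j})$ arising from integrating out $B^{(2j)}$ against the event $A'_{2j}$. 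The product therefore telescopes and collapses to $\P(\bigcap_{j=1}^{k+1}A'_{2j-1}\cap E_n)/\P(A'_1\cap E_n)$.

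\textbf{Main obstacle.} The delicate point is the identity in Step 2. A naive pointwise swap $J_{2j}\leftrightarrow J_{2j+1}$ does not preserve the event $E_n$, because $J_{2j}$ appears in $E_n^{\text{even}}$ while $J_{2j+1}$ appears in $E_n^{\text{odd}}$, and the two sums feature different remaining $J$'s. To execute the swap correctly I would condition on all $J_k$'s other than the pair $(J_{2j},J_{2j+1})$, at which point $E_n^{\text{even}}$ and $E_n^{\text{odd}}$ decouple into one-sided constraints $J_{2j}\ge c_1$ and $J_{2j+1}\ge c_2$ for $\mathcal{H}$-measurable thresholds, and then exploit the identical Geometric marginals together with the fact that both halves of $E_n$ involve the same number of i.i.d. Geometric summands to carry out the substitution $f(w_j,J_{2j})\mapsto f(w_j,J_{2j+1})$ while preserving the integral. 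Getting this exchange right, in particular tracking that the $\mathcal{H}$-weights $\prod p(w_i,J_{2i})$ (which depend on period lengths but not on the pair being swapped) survive unchanged, is what makes the telescope line up exactly rather than only approximately.
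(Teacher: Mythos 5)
There is a genuine gap at the final step: after your Step 1 and Step 2 the product does not telescope. With the skeleton weights retained, your $j$th factor is
\[
\frac{\P\big(A'_{2j+1}(t)\cap V'_{2j-1}(t)\cap E_n(t)\big)}{\P\big(V'_{2j-1}(t)\cap E_n(t)\big)}
=\frac{\E\big[\ind_{F_{j+1}\cap E_n(t)}\prod_{i<j}p(w_i,J_{2i}(t))\big]}{\E\big[\ind_{F_{j}\cap E_n(t)}\prod_{i<j}p(w_i,J_{2i}(t))\big]},
\qquad F_j:=\bigcap_{i\le j}A'_{2i-1}(t),
\]
and, as you yourself observe, its numerator is \emph{not} the denominator of the $(j+1)$th factor, which is $\P\big(A'_{2j+1}(t)\cap A'_{2j}(t)\cap V'_{2j-1}(t)\cap E_n(t)\big)=\E\big[\ind_{F_{j+1}\cap E_n(t)}\prod_{i\le j}p(w_i,J_{2i}(t))\big]$. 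Saying they ``differ by the single factor $p(w_j,J_{2j}(t))$'' does not help: that factor sits inside an expectation, so consecutive ratios do not cancel, and ``the product therefore telescopes'' is a non sequitur. The missing idea — and the first move in the paper's proof, done by induction on $k$ — is to remove the even-period events from the conditioning \emph{before} any swap: one argues that $B^{(2j)}(t)$ is conditionally independent of $A'_{2i}(t)$, $i<j$, given the odd events, so that $\P\big(B^{(2j)}_i(t)>0\ \forall i\in[1,J_{2j}(t)]\,\big|\,V'_{2j-1}(t)\cap E_n(t)\big)=\P\big(B^{(2j)}_i(t)>0\ \forall i\in[1,J_{2j}(t)]\,\big|\,F_j\cap E_n(t)\big)$, and only then identifies the latter with $\P\big(A'_{2j+1}(t)\,\big|\,F_j\cap E_n(t)\big)$. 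Each factor is then the \emph{unweighted} ratio $\P(F_{j+1}\cap E_n(t))/\P(F_j\cap E_n(t))$, and the product genuinely collapses to $\P(F_{k+1}\cap E_n(t))/\P(A'_1(t)\cap E_n(t))$. Your proposal contains no analogue of this de-conditioning step, and without it the claimed conclusion does not follow from your Step 2.

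Your proposed resolution of the ``main obstacle'' is also not correct as stated. If you condition on all $J_l(t)$ other than the pair $(J_{2j}(t),J_{2j+1}(t))$, the two thresholds $c_1,c_2$ become fixed and in general unequal constants, and then the substitution $f(w_j,J_{2j}(t))\mapsto f(w_j,J_{2j+1}(t))$ does change the conditional expectation: it compares $\E\big[f(w,J)\ind_{\{J\ge c_1\}}\big]\,\P(J'\ge c_2)$ with $\P(J\ge c_1)\,\E\big[f(w,J')\ind_{\{J'\ge c_2\}}\big]$ for i.i.d.\ Geometric $J,J'$, and since $f(w,\cdot)$ is decreasing these differ whenever $c_1\neq c_2$; the observation that both halves of $E_n(t)$ contain the same number of summands is of no use once those summands have been frozen. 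The exchange has to be performed keeping $J_{2j}(t)$ coupled to the remaining even sum and $J_{2j+1}(t)$ to the remaining odd sum, using that these two pairs are equal in law and independent of the other ingredients of the integrand — and this is exactly why the even-period conditioning must be discarded first, since the weights $\prod_{i<j}p(w_i,J_{2i}(t))$ involve $J_{2i}(t)$, $i<j$, which are themselves part of the remaining even sum and would destroy the independence your swap needs.
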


Combining Propositions \ref{VtoA'} and \ref{telescope}, and then using elementary bounds, allows us to prove the following.

\begin{prop}\label{VkcapE}
Suppose that $t\in[0,1]$ and $n\in\N$. Then for any $k\ge nt/4$, we have
\[\P\big(V_k(t) \cap E_n(t)\big) \lesssim \frac{1}{nt^{1/2}}.\]
\end{prop}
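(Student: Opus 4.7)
The plan is to apply Propositions \ref{VtoA'} and \ref{telescope} to reduce the claim to bounding a single ratio, and then to control the numerator and denominator separately. Combining Proposition \ref{VtoA'} with Proposition \ref{telescope} (the latter applied with $k$ replaced by $\lfloor k/2\rfloor$) gives
\[
\P(V_k(t)\cap E_n(t)) \le \frac{\P\bigl(\bigcap_{j=1}^{\lfloor k/2\rfloor+1} A'_{2j-1}(t)\cap E_n(t)\bigr)^2}{\P(A'_1(t)\cap E_n(t))}.
\]
It therefore suffices to show that the numerator is $\lesssim n^{-1/2}$ and the denominator is $\gtrsim \sqrt t$, whereupon the claim follows at once.

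For the numerator I would exploit the fact that $W(t)$ is constant on even periods and has $\pm 1$-valued iid increments on odd periods. Conditional on the period lengths $\{J_j(t)\}$, the values of $W(t)$ read off only during the first $K:=\lfloor k/2\rfloor+1$ odd periods form a simple symmetric random walk of length $J_1(t)+J_3(t)+\cdots+J_{2K-1}(t)$, and $\bigcap_{j=1}^K A'_{2j-1}(t)$ is precisely the event that this walk stays strictly positive throughout. Since $k\ge nt/4$ forces $K\ge\lfloor nt/8\rfloor+1$, the inequality built into $E^{\text{odd}}_n(t)\supset E_n(t)$ shows this total length is at least $n/8$ on $E_n(t)$. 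Corollary \ref{reflcor}, applied conditional on $\{J_j(t)\}$, then gives a conditional bound of order $n^{-1/2}$ on $E_n(t)$, and taking expectation completes the numerator bound.

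For the denominator, during the first period $Z(t)=Z(0)$, so $A'_1(t)=\{Z_i(0)>0\,\forall i\in[1,J_1(t)-1]\}$. Conditioning on $J_1(t)=\ell$ and applying Corollary \ref{reflcor} gives $\P(A'_1(t)\mid J_1(t)=\ell)\asymp \ell^{-1/2}$; since $J_1(t)$ is Geometric with parameter $(1-e^{-t})/2\asymp t$, summing over $\ell$ yields $\P(A'_1(t))\asymp\sqrt t$. When $nt$ is sufficiently large that Lemma \ref{Eproblarge} gives $\P(E_n(t)^c)\le\tfrac{1}{2}\P(A'_1(t))$, a direct subtraction yields $\P(A'_1(t)\cap E_n(t))\gtrsim\sqrt t$, and when $nt$ is small the crude bound $\P(V_k(t)\cap E_n(t))\le \P(A'_1(t))\lesssim\sqrt t$ already matches the target $1/(n\sqrt t)$. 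The main obstacle is the intermediate regime in which $\P(E_n(t)^c)$ is comparable to $\sqrt t$; this is handled by using the conditional independence of $A'_1(t)$ and $E_n(t)$ given $\{J_j(t)\}$ (they depend on disjoint sources of randomness) to write $\P(A'_1(t)\cap E_n(t))$ as a sum over the value of $J_1(t)$, restricting to the typical range $\ell\asymp 1/t$, on which the product of the first two factors is already $\asymp\sqrt t$ while $\P(E_n(t)\mid J_1(t)=\ell)$ remains bounded below by a positive constant by a law-of-large-numbers analysis of $E^{\text{odd}}_n(t)$, $E^{\text{even}}_n(t)$ and $E'_n(t)$.
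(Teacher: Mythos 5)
Your proposal is correct and follows essentially the same route as the paper: the same reduction via Propositions \ref{VtoA'} and \ref{telescope}, the same sewing of the odd periods into a single simple random walk whose length is at least $n/8$ on $E_n(t)$ (using $k\ge nt/4$ and Corollary \ref{reflcor}), and the same estimate $\P(A'_1(t))\asymp t^{1/2}$. The only divergence is your three-regime treatment of the denominator, which the paper sidesteps by bounding the ratio by $\P(\,\cdot\mid E_n(t))^2/\P(A'_1(t)\mid E_n(t))$ (using $\P(E_n(t))\le 1$) and then invoking the full independence of $A'_1(t)$ from $E_n(t)$ (the latter involves only periods $2$ and later), so that $\P(A'_1(t)\mid E_n(t))=\P(A'_1(t))\gtrsim t^{1/2}$ in one line.
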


Leaving the proof of Proposition \ref{VkcapE} until later, we now observe that
\begin{align*}
\P\big(P_n(0)\cap P_n(t)\big) &= \P\big(P_n(0)\cap P_n(t)\cap E_n(t)\cap E'_n(t)\big) + \P\big(P_n(0)\cap P_n(t)\cap(E_n(t)^c\cup E'_n(t)^c)\big)\\
&\le \P\big(V_{2\lfloor nt/8\rfloor + 1}(t)\cap E_n(t)\big) + \P\big(P_n(0)\cap(E_n(t)^c\cup E'_n(t)^c)\big)\\
&= \P\big(V_{2\lfloor nt/8\rfloor + 1}(t)\cap E_n(t)\big) + \P\big(P_n(0)\big)\P\big(E_n(t)^c\cup E'_n(t)^c\big)
\end{align*}
where the last equality used the independence of $Z(0)$ and the lengths of the periods at time $t$. By Proposition \ref{VkcapE}, the first term on the last line above is at most a constant times $1/(nt^{1/2})$, and by Corollary \ref{reflcor} and Lemma \ref{Eproblarge}, the second term is at most a constant times $n^{-1/2}\exp(-\delta nt)$ for some constant $\delta>0$. Thus
\[\P\big(P_n(0)\cap P_n(t)\big) \lesssim \frac{1}{nt^{1/2}}+\frac{1}{n^{1/2}}\exp(-\delta nt)\]
and so
\[\int_0^1 \frac{\P(P_n(0) \cap P_n(t))}{t^\gamma} \d t \lesssim \frac{1}{n} \int_0^1 t^{-1/2-\gamma}\d t + \frac{1}{n^{1/2}} \int_0^1 t^{-\gamma}e^{-\delta nt} \d t.\]
For $\gamma<1/2$, the first integral on the right-hand side above is finite and the second integral (which can be approximated by integrating separately over $(0,1/n]$ and $(1/n,1)$) is of order $n^{\gamma-1}$. Therefore, for $\gamma<1/2$,
\[\int_0^1 \frac{\P(P_n(0) \cap P_n(t))}{t^\gamma} \d t \lesssim n^{-1} + n^{\gamma-3/2} \asymp n^{-1}.\]
Recalling from the start of the section that
\[\E[ \Phi^\alpha_n(\gamma)] \le \frac{2}{\P(P_n^\alpha)^2} \int_0^1 \frac{\P(P_n(0) \cap P_n(t))}{t^\gamma} \d t,\]
and from Lemma \ref{Pnalphaasymp} that for any $\alpha<1/2$,
\[\P(P_n^\alpha)\asymp \frac{1}{\sqrt n},\]
we have for $\alpha,\gamma<1/2$ that
\[\E[ \Phi^\alpha_n(\gamma)] \lesssim 1.\]
This completes the proof of Proposition \ref{EPhifinite}, subject to proving all of the intermediary results above.

Before we begin to prove these results, we will need another elementary lemma as an ingredient in the proof of Proposition \ref{VtoA'}.

\begin{lem}\label{starthalfway}
If $(S_i,\, i\ge 0)$ is a simple symmetric random walk, then for any $x,y,k\in\N$,
\[\P_x(S_i \in (0,2y) \,\,\,\, \forall i\le k) \le \P_y(S_i \in (0,2y)\,\,\,\,\forall i\le k).\]
\end{lem}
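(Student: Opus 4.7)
The plan is to establish the stronger statement that $h_k(x) := \P_x(S_i \in (0, 2y) \,\forall\, i \le k)$, viewed as a function of the starting point $x$, is maximised on $\{0, 1, \ldots, 2y\}$ at the midpoint $x = y$; this immediately implies the lemma. If $x \notin \{1, \ldots, 2y-1\}$ the left-hand side already vanishes, so one may restrict to $x \in \{1, \ldots, 2y-1\}$ throughout.

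The idea is to combine two simple properties of $h_k$. First, conditioning on the first step of the walk gives the recurrence
\[
h_k(x) = \tfrac12\bigl(h_{k-1}(x-1) + h_{k-1}(x+1)\bigr) \qquad \text{for } x \in \{1, \ldots, 2y-1\},
\]
with boundary values $h_{k-1}(0) = h_{k-1}(2y) = 0$. Combined with the obvious monotonicity $h_k(x) \le h_{k-1}(x)$ (adding one more step only tightens the constraint), this yields
\[
h_{k-1}(x-1) + h_{k-1}(x+1) = 2 h_k(x) \le 2 h_{k-1}(x)
\]
on the interior of $\{0, \ldots, 2y\}$, i.e.\ midpoint concavity of $h_{k-1}$. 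Second, the reflection $s \mapsto 2y - s$ sends a simple symmetric walk started at $x$ to one started at $2y - x$ and preserves $(0, 2y)$, giving the symmetry $h_k(x) = h_k(2y - x)$.

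To finish, I would invoke the standard extension of discrete midpoint concavity: the forward differences $h_{k-1}(x+1) - h_{k-1}(x)$ are non-increasing in $x$, so the piecewise-linear interpolation of $h_{k-1}$ on $[0, 2y]$ is concave, which gives the chord inequality $h_{k-1}(y) \ge \tfrac12(h_{k-1}(x) + h_{k-1}(2y-x))$ at the midpoint $y$ of $x$ and $2y - x$. Combining this with the symmetry yields $h_{k-1}(y) \ge h_{k-1}(x)$, and since $k$ was arbitrary this is the lemma. I do not anticipate any genuine obstacle: the recurrence, monotonicity, symmetry and the extension of midpoint concavity are all routine. The only care needed is in checking that the monotonicity-plus-recurrence argument gives midpoint concavity at every interior point of $\{0, \ldots, 2y\}$ (including the neighbours of the boundary), which is exactly where the vanishing of $h_{k-1}$ at $0$ and $2y$ enters.
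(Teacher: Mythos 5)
Your proof is correct, and it is organised differently from the paper's. The paper fixes the same three ingredients you use (the one-step recurrence, the trivial monotonicity $h_k\le h_{k-1}$, and the reflection symmetry about $y$), but deploys them in an induction on $k$ to show that $x\mapsto \P_x(S_i\in(0,2y)\,\forall i\le k)$ is non-decreasing on $\{0,\ldots,y\}$, the symmetry being invoked both at the midpoint step of the induction and to conclude. You instead avoid induction altogether: the identity $h_{k-1}(x-1)+h_{k-1}(x+1)=2h_k(x)\le 2h_{k-1}(x)$ gives discrete concavity of $h_{k-1}$ on $\{0,\ldots,2y\}$ in one line (the vanishing boundary values taking care of the points adjacent to $0$ and $2y$), and concavity plus the symmetry $h_{k-1}(x)=h_{k-1}(2y-x)$ forces the maximum at the midpoint $y$ via the chord inequality. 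Your route proves the slightly stronger structural fact that the survival probability is concave in the starting point (which in particular recovers the paper's monotonicity up to $y$), at the cost of invoking the standard passage from non-positive second differences to concavity of the linear interpolation; the paper's induction is marginally more hands-on but proves only what is needed. Either argument is complete and suitable here.
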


This is easily proved by induction. We include a proof later, but now proceed with the much more interesting proofs of Propositions \ref{VtoA'} and \ref{telescope}. These proofs contain the main ideas of the article.

\begin{proof}[Proof of Proposition \ref{VtoA'}]
Our first step is to move from $A_j(t)$ to $A'_j(t)$. To do so, we go via a third collection of events which we call $\tilde A_j(t)$. When $j$ is odd, let $\tilde A_j(t) = A'_j(t)$. We have already mentioned that if $j$ is odd, then
\[A_j(t) \subset A'_j(t) = \tilde A_j(t).\]
When $j$ is even, define the event
\[\tilde A_j(t) = \{Z_i(0)\in (0,2W_{I_{j-1}(t)-1}(t)) \,\,\,\, \forall i\in[I_{j-1}(t), I_j(t)-1]\}.\]
We claim that when $j$ is even, we also have $A_j(t)\subset\tilde A_j(t)$. Indeed, suppose that $j$ is even. We show that if $\omega\not\in \tilde A_j(t)$ then $\omega\not\in A_j(t)$. If $\omega\not\in\tilde A_j(t)$ then there exists $i\in[I_{j-1}(t),I_j(t)-1]$ such that either $Z_i(0)\le 0$, in which case clearly $\omega\not\in A_j(t)$, or
\[Z_i(0)\ge 2W_{I_{j-1}(t)-1}(t) = Z_{I_{j-1}(t)-1}(0) + Z_{I_{j-1}(t)-1}(t).\]
Then
\[Z_i(0)-Z_{I_{j-1}(t)-1}(0) \ge Z_{I_{j-1}(t)-1}(t),\]
so since the increments of $Z_i(t)$ are the negative of the increments of $Z_i(0)$ during even periods,
\[Z_i(t)-Z_{I_{j-1}(t)-1}(t) \le - Z_{I_{j-1}(t)-1}(t)\]
and therefore $Z_i(t)\le 0$. Thus $\omega\not\in A_j(t)$, establishing our claim. We deduce that, for any $k\in\N$,
\begin{equation}\label{AtoAtilde}
A_1(t)\cap A_2(t)\cap\ldots\cap A_k(t) \subset \tilde A_1(t)\cap \tilde A_2(t)\cap\ldots\cap \tilde A_k(t).
\end{equation}

Note that the increments of $Z_i(0)$ on even periods are independent of the whole process $W_i(t)$. Combining this fact with Lemma \ref{starthalfway}, we have
\begin{equation}\label{givenFI}
\P\big(\tilde A_1(t)\cap \tilde A_2(t)\cap\ldots\cap \tilde A_k(t) \big| \F_{I(t)}\big) \le \P\big(A'_1(t)\cap A'_2(t)\cap\ldots\cap A'_k(t) \big| \F_{I(t)}\big)
\end{equation}
for any $k\in\N$, where $\F_{I(t)} = \sigma(I_j(t),j\ge 0)$. Combining \eqref{AtoAtilde} and \eqref{givenFI} and taking expectations to remove the conditioning, for any $k\in\N$ we have
\[\P(V_k(t) \cap E_n(t))\le \P(V'_k(t)\cap E_n(t)).\]
Applying Bayes' formula and then ignoring the odd terms for $j\ge3$, we have
\begin{align}
\P\big(V_k(t) \cap E_n(t)\big) &\le \P\big(A_1'(t)\cap E_n(t)\big)\cdot\prod_{j=2}^k \P\big(A'_j(t) \,\big|\, V'_{j-1}(t) \cap E_n(t)\big)\nonumber\\
&\le \P\big(A_1'(t)\cap E_n(t)\big)\cdot\prod_{j=1}^{\lfloor k/2\rfloor} \P\big(A'_{2j}(t) \,\big|\, V'_{2j-1}(t) \cap E_n(t)\big).\label{Vtoprod1}
\end{align}

We now apply the FKG inequality \eqref{fkg2}. Recalling that
\begin{align*}
A'_{2j}(t) &= \{W_{I_{2j-1}(t)-1}(t) + B^{(2j)}_i(t) \in (0,2W_{I_{2j-1}(t)-1}(t)) \,\,\,\, \forall i\in[1,J_{2j}(t)]\}\\
&= \{W_{I_{2j-1}(t)-1}(t) + B^{(2j)}_i(t) > 0 \,\,\,\, \forall i\in[1,J_{2j}(t)]\}\\
&\hspace{35mm}\cap \{W_{I_{2j-1}(t)-1}(t) + B^{(2j)}_i(t) < 2W_{I_{2j-1}(t)-1}(t) \,\,\,\, \forall i\in[1,J_{2j}(t)]\},
\end{align*}
and noting that the two events above are increasing and decreasing respectively, we get that
\begin{align*}
\P\big(A'_{2j}(t) \,\big|\, V'_{2j-1}(t) \cap E_n(t)\big) &\le \P\big(B^{(2j)}_i(t) > 0 \,\,\,\, \forall i\in[1,J_{2j}(t)] \,\big|\,  V'_{2j-1}(t) \cap E_n(t)\big)\\
&\hspace{28mm}\cdot \P\big(B^{(2j)}_i(t) < 2W_{I_{2j-1}(t)-1}(t) \,\big|\, V'_{2j-1}(t) \cap E_n(t)\big)\\
&= \P\big(B^{(2j)}_i(t) > 0 \,\,\,\, \forall i\in[1,J_{2j}(t)] \,\big|\,  V'_{2j-1}(t) \cap E_n(t)\big)^2,
\end{align*}
where the inequality comes from \eqref{fkg2} and the equality follows from symmetry about $W_{I_{2j-1}(t)-1}(t)$ (recalling that $B^{(2j)}_0(t) = W_{I_{2j-1}(t)-1}(t)$). Substituting this into \eqref{Vtoprod1}, we have shown that
\[\P\big(V_k(t) \cap E_n(t)\big) \le \P\big(A_1'(t)\cap E_n(t)\big)\cdot\prod_{j=1}^{\lfloor k/2\rfloor} \P\big(B^{(2j)}_i(t) > 0 \,\,\,\, \forall i\in[1,J_{2j}(t)] \,\big|\,  V'_{2j-1}(t) \cap E_n(t)\big)^2\]
as required.
\end{proof}

\begin{proof}[Proof of Proposition \ref{telescope}]
We work by induction on $k$. For $k=1$, we have
\[\P\big(B^{(2)}_i(t)>0 \,\,\,\,\forall i\in[1,J_2(t)] \,\big|\, V'_1(t)\cap E_n(t)\big) = \frac{\P\big(\{B^{(2)}_i(t)>0 \,\,\,\,\forall i\in[1,J_2(t)]\} \cap A'_1(t)\cap E_n(t)\big)}{\P\big(A'_1(t)\cap E_n(t)\big)}.\]
On the event $A'_1(t)\cap E_n(t)$, the law of $(B^{(2)}_i(t))_{i\in[1,J_2(t)]}$ is identical to that of $(W_{I_2(t)-1+i}(t))_{i\in[1,J_3(t)]}$, and therefore
\[\P\big(B^{(2)}_i(t)>0 \,\,\,\,\forall i\in[1,J_2(t)] \,\big|\, V'_1(t)\cap E_n(t)\big) = \frac{\P\big(A'_3(t) \cap A'_1(t)\cap E_n(t)\big)}{\P\big(A'_1(t)\cap E_n(t)\big)},\]
establishing the claim in the case $k=1$. The general case is very similar: assuming that the claim holds for $k-1$, we have
\begin{multline*}
\prod_{j=1}^k \P\big(B^{(2j)}_i(t) > 0 \,\,\,\, \forall i\in[1,J_{2j}(t)] \,\big|\,  V'_{2j-1}(t) \cap E_n(t)\big)\\
= \frac{\P\big(\bigcap_{j=1}^k A'_{2j-1}(t)\cap E_n(t)\big)}{\P\big(A'_1(t)\cap E_n(t)\big)}\P\big(B^{(2k)}_i(t)>0 \,\,\,\,\forall i\in[1,J_{2k}(t)] \,\big|\, V'_{2k-1}(t)\cap E_n(t)\big).
\end{multline*}
Considering the last term on the right-hand side above, we note that $B^{(2k)}(t)$ is independent of $A'_{2j}(t)$ given $A'_{2j-1}(t)$ for all $j<k$, and therefore the above equals
\begin{multline*}
\frac{\P\big(\bigcap_{j=1}^k A'_{2j-1}(t)\cap E_n(t)\big)}{\P\big(A'_1(t)\cap E_n(t)\big)}\P\bigg(B^{(2k)}_i(t)>0 \,\,\,\,\forall i\in[1,J_{2k}(t)] \,\bigg|\, \bigcap_{j=1}^k A'_{2j-1}(t)\cap E_n(t)\bigg)\\
\hspace{10mm}=\frac{\P\big(\{B^{(2k)}_i(t)>0 \,\,\,\,\forall i\in[1,J_{2k}(t)]\}\cap \bigcap_{j=1}^k A'_{2j-1}(t)\cap E_n(t)\big)}{\P\big(A'_1(t)\cap E_n(t)\big)}.
\end{multline*}
Provided that $2k\le n$, on the event $\bigcap_{j=1}^k A'_{2j-1}(t)\cap E_n(t)$, the law of $(B^{(2k)}_i(t))_{i\in[1,J_{2k}(t)]}$ is identical to that of $(W_{I_{2k}(t)-1+i}(t))_{i\in[1,J_{2k+1}(t)]}$, and therefore
\[\P\bigg(\Big\{B^{(2k)}_i(t)>0 \,\,\,\,\forall i\in[1,J_{2k}(t)]\Big\}\cap \bigcap_{j=1}^k A'_{2j-1}(t)\cap E_n(t)\bigg) = \P\bigg(\bigcap_{j=1}^{k+1} A'_{2j-1}(t)\cap E_n(t)\bigg)\]
which establishes the claim for $k$, completing the proof.
\end{proof}

The proof of our third proposition in this section, Proposition \ref{VkcapE}, does not contain any major ideas; it simply combines the results above with some elementary approximations.

\begin{proof}[Proof of Proposition \ref{VkcapE}]
Combining Propositions \ref{VtoA'} and \ref{telescope}, we have
\[\P\big(V_k(t) \cap E_n(t)\big) \le \frac{\P\big(\bigcap_{j=1}^{\lfloor k/2\rfloor+1} A'_{2j-1}(t) \cap E_n(t)\big)^2}{\P(A'_1(t)\cap E_n(t))}.\]
Recalling that $A'_{2j-1}(t)$ requires that $W_i(t)$ is positive on the $(2j-1)$th period, whereas $W_i(t)$ is constant on even periods, we note that
\[\bigcap_{j=1}^{\lfloor k/2\rfloor+1} A'_{2j-1}(t) = \{W_i(t)>0 \,\,\,\, \forall i\le I_{2\lfloor k/2\rfloor+1}(t)-1\}\]
and therefore
\[\P\big(V_k(t) \cap E_n(t)\big) \le \frac{\P\big(\{W_i(t)>0 \,\,\,\, \forall i\le I_{2\lfloor k/2\rfloor+1}(t)-1\}\cap E_n(t)\big)^2}{\P(A'_1(t)\cap E_n(t))}.\]
Now, $W_i(t)$ is simply a simple symmetric random walk during odd periods, and constant on even periods. Thus the probability that it stays positive up to step $I_{2\lfloor k/2\rfloor+1}(t)-1$ is exactly the probability that a simple symmetric random walk stays positive up to step $J_1(t) + J_3(t) + \ldots + J_{2\lfloor k/2\rfloor+1}(t)-1$. We deduce that 
\begin{align*}
\P\big(V_k(t) \cap E_n(t)\big) &\le \frac{\P\big(\{Z_i(t)>0 \,\,\,\, \forall i\le J_1(t) + J_3(t) + \ldots + J_{2\lfloor k/2\rfloor+1}(t)-1\}\cap E_n(t)\big)^2}{\P(A'_1(t)\cap E_n(t))}\\
&\le \frac{\P\big(Z_i(t)>0 \,\,\,\, \forall i\le J_1(t) + J_3(t) + \ldots + J_{2\lfloor k/2\rfloor+1}(t)-1 \,\big|\, E_n(t)\big)^2}{\P\big(A'_1(t)\,\big|\, E_n(t)\big)}.
\end{align*}
On the event $E_n(t)\subset \odd$, we have
\[J_1(t)+ J_3(t) + \ldots + J_{2\lfloor nt/8\rfloor+1}(t)-1 \ge J_3(t)+ J_5(t) + \ldots + J_{2\lfloor nt/8\rfloor+1}(t)\ge n/8,\]
and therefore for any $k\ge nt/4$,
\begin{equation}\label{VcapEfinal}
\P\big(V_k(t) \cap E_n(t)\big) \le \frac{\P\big(Z_i(t)>0 \,\,\,\, \forall i\le n/8\big)^2}{\P\big(A'_1(t) \,\big|\, E_n(t)\big)} = \frac{\P\big(Z_i(0)>0 \,\,\,\, \forall i\le n/8\big)^2}{\P(A'_1(t))},
\end{equation}
where the equality holds by stationarity of $Z(t)$ and the independence of $A'_1(t)$ and $E_n(t)$ (since $E_n(t)$ only involves periods 2 and later). We know from Corollary \ref{reflcor} that
\[\P\big(Z_i(0)>0 \,\,\,\, \forall i\le n/8\big) \asymp n^{-1/2},\]
and we claim that
\[\P(A'_1(t))\gtrsim t^{1/2}.\]
To see this, note that $I_1(t)$ is independent of $Z(0)$, so
\begin{align*}
\P(A'_1(t)) &= \P(Z_i(0)>0\,\,\,\,\forall i=1,\ldots,I_1(t))\\
&\ge \P\Big(I_1(t)\le \Big\lceil\frac{4}{1-e^{-t}}\Big\rceil\Big)\P\Big(Z_i(0)>0 \,\,\,\, \forall i = 1,\ldots,\Big\lceil\frac{4}{1-e^{-t}}\Big\rceil\Big).
\end{align*}
But by Markov's inequality
\[\P\Big(I_1(t)\le \Big\lceil\frac{4}{1-e^{-t}}\Big\rceil\Big) = 1- \P\Big(I_1(t)> \Big\lceil\frac{4}{1-e^{-t}}\Big\rceil\Big) \ge 1-\frac{1-e^{-t}}{4}\E[I_1(t)] = 1 - \frac{1}{2} = \frac{1}{2};\]
and by Corollary \ref{reflcor},
\[\P\Big(Z_i(0)>0 \,\,\,\, \forall i = 1,\ldots,\Big\lceil\frac{4}{1-e^{-t}}\Big\rceil\Big) \asymp (1-e^{-t})^{1/2} \asymp t^{1/2},\]
which establishes the claim. Substituting our approximations into \eqref{VcapEfinal}, we have shown that for any $k\ge nt/4$,
\[\P\big(V_k(t) \cap E_n(t)\big) \lesssim \frac{1}{nt^{1/2}}\]
as required.
\end{proof}

We now proceed with the proofs of our minor lemmas.

\begin{proof}[Proof of Lemma \ref{Pnalphaasymp}]
Recalling that
\[P_n = \{Z_i>0 \,\,\,\,\forall i = 1,\ldots,n\} \,\,\,\,\text{ and } \,\,\,\, P^\alpha_n = \big\{Z_i \ge i^\alpha \,\,\,\,\forall i = 1,\ldots,n\big\},\]
we use the fact that $\P(P_n^\alpha) = \P(P_n^\alpha | P_n)\P(P_n)$. From Corollary \ref{reflcor} we know that $\P(P_n)\asymp n^{-1/2}$. It therefore suffices to show that $\P(P_n^\alpha) \asymp \P(P_n)$ for any $\alpha<1/2$. Fix $\alpha'\in(\alpha,1/2)$. We apply \cite[Theorem 2]{ritter:growth_RW_cond_positive}, which says that we may choose $\delta>0$ such that
\[\P(Z_i\ge \delta i^{\alpha'} \,\,\,\,\forall i=1,\ldots,n) \ge \P(P_n)/2.\]
Choose $k$ such that $\delta i^{\alpha'} \ge i^\alpha$ for all $i\ge k$. Then
\begin{align*}
\P(Z_i \ge i^\alpha \,\,\,\, \forall i=1,\ldots,n) &\ge \P(Z_i = i\,\,\,\, \forall i=1,\ldots,k; \, Z_i \ge i^\alpha\,\,\,\, \forall i=k+1,\ldots,n)\\
&\ge \P(Z_i = i\,\,\,\, \forall i=1,\ldots,k; \, Z_i \ge \delta i^{\alpha'}\,\,\,\, \forall i = k+1,\ldots,n)\\
&= 2^{-k} \P(Z_i \ge \delta(i+k)^{\alpha'}-k \,\,\,\,\forall i = 1,\ldots,n-k)\\
&\ge 2^{-k}\P(Z_i \ge \delta i^{\alpha'}\,\,\,\,\forall i = 1,\ldots,n) \ge 2^{-(k+1)}\P(P_n),
\end{align*}
which completes the proof.
\end{proof}

\begin{proof}[Proof of Lemma \ref{Eproblarge}]
We begin by considering $\odd$. In order for $\odd^c$ to occur, the sum of $\lfloor nt/8\rfloor$ independent geometric random variables of parameter $(1-e^{-t})/2$ must be smaller than $n/8$; which is equivalent to a Binomial random variable of parameters $(\lceil n/8\rceil, (1-e^{-t})/2)$ being larger than $\lfloor nt/8\rfloor$. Letting $Y$ be such a random variable, we have
\[\E[e^{(\log 2)Y}] = \Big((1+e^{-t})/2 + (1-e^{-t})\Big)^{\lceil n/8\rceil} = \Big(1+(1-e^{-t})/2\Big)^{\lceil n/8\rceil} \le (1+t/2)^{\lceil n/8\rceil} \le e^{(n/8+1)t/2},\]
so
\[\P(Y\ge \lfloor nt/8\rfloor) \le \E[e^{(\log 2)Y}]e^{-(\log 2)\lfloor nt/8\rfloor} \le e^{(n/8+1)t/2 - (\log 2)(nt/8-1)} \le 2e^{1/2}e^{-(2\log 2 - 1)nt/16}.\]
This proves the required decay for $\P(\odd^c)$, and $\P(\even)=\P(\odd)$. The proof for $\P(E'_n(t)^c)$ uses a very similar Chernoff bound, noting that $I_j(t)$ is a sum of $j$ independent Geometric random variables of parameter $(1-e^{-t})/2$.
\end{proof}

\begin{proof}[Proof of Lemma \ref{starthalfway}]
Fix $y\in\N$ and let
\[p_{x,k} = \P_x(S_i \in (0,2y) \,\,\,\, \forall i\le k).\]
We claim, by induction on $k$, that $p_{x,k}$ is non-decreasing in $x$ for $x\le y$. By symmetry this is enough to prove the lemma. Clearly the claim holds for $k=0$. For general $k$, if $x=y$ then by symmetry
\[p_{y,k+1} = \frac12 p_{y-1,k} + \frac12 p_{y+1,k} = p_{y-1,k}\]
which is larger than $p_{y-1,k+1}$ by definition. On the other hand if $x<y$, then by the induction hypothesis,
\[p_{x,k+1} = \frac12 p_{x-1,k} + \frac12 p_{x+1,k} \ge \frac12 p_{x-2,k} + \frac12 p_{x,k} = p_{x-1,k+1}.\]
This completes the proof of our final lemma in this section, and therefore the proof of Proposition \ref{EPhifinite}.
\end{proof}

\section{Proof of Proposition \ref{influenceprop}: influences of $P_n$}\label{influencesec}

In this section we give estimates on the influence of each bit $m=1,2,\ldots,n$ on the event $P_n$. Proposition \ref{influenceprop} stated that for $m=1,\ldots,n$,
\[\mathcal I_m(P_n) \asymp \frac{n-m+1}{n^{3/2}},\]
where $\mathcal I_m(P_n)$ is the probability that the $m$th bit is pivotal for $P_n$, and it will be our aim to prove this. We will keep $n$ fixed and say ``$m$ is pivotal'' as shorthand for ``$m$ is pivotal for $P_n$''.

\subsection{Translating $\mathcal I_m(P_n)$ into elementary properties of the random walk}

To reduce the amount of work we will take advantage of the fact that
\begin{equation}\label{halvework}
\mathcal I_m(P_n) = \P(m \text{ is pivotal}) = 2\P(\{m \text{ is pivotal}\}\cap P_n),
\end{equation}
which holds since the event that $m$ is pivotal is independent of the value of $X_m$:
\begin{align*}
&\P(\{m \text{ is pivotal}\}\cap P_n)\\
&= \P(\{m \text{ is pivotal}\}\cap\{X_m=1\}\cap P_n) + \P(\{m \text{ is pivotal}\}\cap\{X_m=-1\}\cap P_n)\\
&= \P(\{m \text{ is pivotal}\}\cap\{X_m=-1\}\cap P_n^c) + \P(\{m \text{ is pivotal}\}\cap\{X_m=1\}\cap P_n^c)\\
&=\P(\{m \text{ is pivotal}\}\cap P_n^c).
\end{align*}

We now write down an explicit condition for the event $\{m \text{ is pivotal}\}\cap P_n$ to occur. We claim that for $m=1,2,\ldots,n$,
\begin{equation}\label{pivcond}
\{m \text{ is pivotal}\}\cap P_n = \{Z_i>0 \,\,\,\,\forall i=1,\ldots,n\}\cap \big\{\max_{m\le i\le n} Z_i\ge 2Z_{m-1}\big\}.
\end{equation}
In words, $m$ is pivotal and $P_n$ holds if and only if $Z$ stays positive for the first $n$ steps, and hits $2Z_{m-1}$ between steps $m$ and $n$.

To see why this is true, call the path of $Z$ up to step $m-1$ the \emph{first portion} of the walk, and the path from step $m$ to step $n$ the \emph{second portion}. Of course $P_n$ entails that both portions remain positive. In order for $m$ to be pivotal, we also need that when we change the sign of the $m$th bit, and therefore reflect the second portion of the path about $Z_{m-1}$, the second portion no longer remains positive. This holds if and only if the second portion (before reflection) hits $2Z_{m-1}$. See Figure \ref{pivpic}.
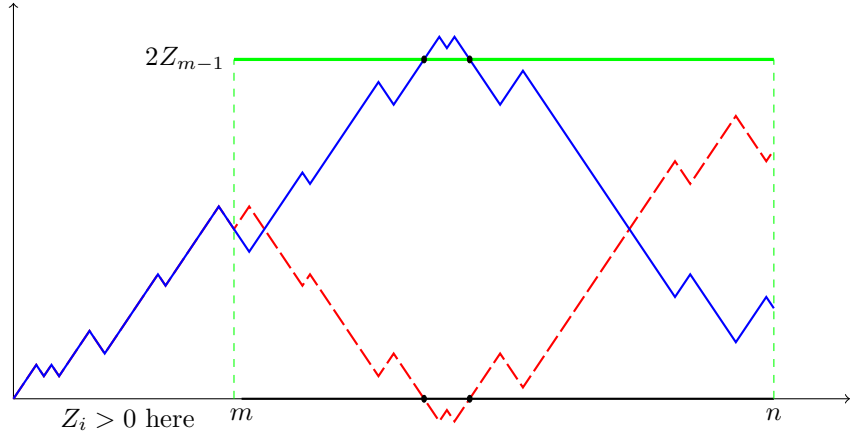
\begin{figure}[h]
\centering\begin{tikzpicture}[yscale=0.15, xscale=0.1]
\draw [<->] (0,35) -- (0,0) -- (110,0);
\draw [thick] (30,0) -- (100,0);
\draw [thick, red, dash pattern={on 7pt off 2pt}] (0,0) -- (3,3) -- (4,2) -- (5,3) -- (6,2) -- (10,6) -- (12,4) -- (19,11) -- (20,10) -- (27,17) -- (29,15) -- (31,17) -- (38,10) -- (39,11) -- (48,2) -- (50,4) -- (54,0) -- (56,-2) -- (57,-1) -- (58,-2) -- (64,4) -- (67,1) -- (87,21) -- (89,19) -- (95,25) -- (99,21) -- (100,22);
\draw [thick, blue] (0,0) -- (3,3) -- (4,2) -- (5,3) -- (6,2) -- (10,6) -- (12,4) -- (19,11) -- (20,10) -- (27,17) -- (29,15) -- (31,13) -- (38,20) -- (39,19) -- (48,28) -- (50,26) -- (54,30) -- (56,32) -- (57,31) -- (58,32) -- (64,26) -- (67,29) -- (87,9) -- (89,11) -- (95,5) -- (99,9) -- (100,8);
\node[below] at (30,0) {$m$};
\node[below] at (100,0) {$n$};
\draw[green, dashed] (29,0) -- (29,30);
\draw[green, dashed] (100,0) -- (100,30);
\draw[very thick, green] (29,30) -- (100,30);
\draw[fill] (54,0) circle [radius=0.3];
\draw[fill] (60,0) circle [radius=0.3];
\draw[fill] (54,30) circle [radius=0.3];
\draw[fill] (60,30) circle [radius=0.3];
\node[below] at (15,0) {$Z_i>0$ here};
\node[left] at (29,30) {$2Z_{m-1}$};
\end{tikzpicture}
\caption{A realisation of $Z$ with and without the $m$th bit flipped (dashed red / solid blue). The black dots show the points at which the walks hits one of the two barriers at $0$ or $2Z_{m-1}$, which is the key to pivotality.}\label{pivpic}
\end{figure}

If $m=1$ then trivially $Z_{m-1}=0$, so \eqref{pivcond} reduces to
\[\{1 \text{ is pivotal}\}\cap P_n = \{Z_i>0 \,\,\,\,\forall i=1,\ldots,n\}.\]
Thus, by Corollary \ref{reflcor}, $\P(\{1 \text{ is pivotal}\}\cap P_n)$ is of order $n^{-1/2}$. Proposition \ref{influenceprop} therefore holds for $m=1$ and we may assume that from now on $m\ge 2$.

Returning to \eqref{pivcond} in the case $m\ge 2$, the next step is to split the event that $m$ is pivotal over the possible values of $Z_{m-1}$. Writing $\P_z$ for the probability measure under which our walk starts from $z$ instead of $0$, by \eqref{halvework} and \eqref{pivcond}
\[\I_m(P_n) = 2\sum_{z=1}^{m-1} \P_0\Big(\min_{1\le i\le m-1} Z_i > 0, \, Z_{m-1} = z\Big)\cdot\P_z\Big(\big\{\min_{i \le n-m+1} Z_i > 0\big\}\cap \big\{\max_{i\le m-n+1} Z_i\ge 2z\big\}\Big).\]
By the ballot theorem \cite{andre:ballot} (or see \cite{addario_berry_reed:ballot_theorems} for a thorough introduction), the probability that a simple symmetric random walk starting from $0$ stays positive up to step $m-1$ and finishes at $z$ is $z/(m-1)$ times the probability that the random walk finishes at $z$; thus
\begin{equation}\label{Imaster}
\I_m(P_n) = 2\sum_{z=1}^{m-1} \frac{z}{m-1}\P_0(Z_{m-1} = z)\cdot\P_z\Big(\big\{\min_{i \le n-m+1} Z_i > 0\big\}\cap \big\{\max_{i\le m-n+1} Z_i\ge 2z\big\}\Big).
\end{equation}

\subsection{A lower bound on the influences of $P_n$}

Define the events
\begin{equation}\label{LandUdef}
L = L(m,n) = \big\{\min_{i \le n-m+1} Z_i > 0\big\} \,\,\,\,\text{ and }\,\,\,\, U = U(m,n,z) = \big\{\max_{i\le n-m+1} Z_i\ge 2z\big\}.
\end{equation}
Let
\[l(m,n) = \Big\lfloor \frac{\sqrt{n-m+1}}{2} \Big\rfloor \wedge \Big\lfloor \frac{\sqrt{m-1}}{2} \Big\rfloor.\]
We want to bound $\P_z(L\cap U)$ from below when $z\le l(m,n)$. The following corollary of Lemmas \ref{LCLT} and \ref{refl} will be useful.

\begin{cor} \label{Lboundcor}
If $0\le z \le \sqrt{n-m+1}$ then
\[\P_z(L(m,n)) \asymp \frac{z+1}{\sqrt{n-m+1}}\]
and if $0\le z \le l(m,n)$ then
\[\P_z(U(m,n,z)) \asymp 1.\]
\end{cor}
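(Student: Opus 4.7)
The plan is to reduce both estimates to direct computations involving the distribution of $Z_{n-m+1}$, using Lemma \ref{refl} and the local central limit theorem (Lemma \ref{LCLT}).

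For the first bound, I would note that under $\P_z$ the event $L(m,n)$ is the event that a walk started at $0$ stays strictly above $-z$ through time $n-m+1$. For $z\ge 1$, Lemma \ref{refl} then converts this into
\[\P_z(L(m,n)) = \P(Z_{n-m+1} \in [-z+1, z]).\]
Since the hypothesis $z\le\sqrt{n-m+1}$ ensures that every integer $y$ in the interval $[-z+1,z]$ satisfies $y^2/(2(n-m+1))\le 1/2$, Lemma \ref{LCLT} gives that each point probability is $\asymp (n-m+1)^{-1/2}$. There are $\asymp z$ integers of the correct parity in $[-z+1,z]$, so the sum is $\asymp z/\sqrt{n-m+1}$. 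The boundary case $z=0$ is handled separately by Corollary \ref{reflcor}, which gives $\P_0(L(m,n))\asymp (n-m+1)^{-1/2}$, consistent with the stated bound $(z+1)/\sqrt{n-m+1}$.

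For the second bound, the upper bound $\P_z(U(m,n,z))\le 1$ is trivial, so only a matching lower bound is needed. The key observation is that
\[\P_z(U(m,n,z)) \ge \P_z(Z_{n-m+1}\ge 2z) = \P_0(Z_{n-m+1}\ge z),\]
since a walk ending at height at least $2z$ has certainly attained it. I would then write $\P_0(Z_{n-m+1}\ge z)$ as a sum of point probabilities over values $y\in[z,\sqrt{n-m+1}]$ of the correct parity. Because $z\le l(m,n)\le \sqrt{n-m+1}/2$, there are at least $\gtrsim \sqrt{n-m+1}$ such values, and since each has $y^2/(2(n-m+1))\le 1/2$, Lemma \ref{LCLT} gives each contribution as $\asymp (n-m+1)^{-1/2}$. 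Summing yields a lower bound of order $1$.

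Both proofs are essentially bookkeeping from the tools already established; the only mild technical point is tracking parity and dealing with the special case $z=0$ for $L$. I do not expect any substantive obstacle.
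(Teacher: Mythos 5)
Your proposal is correct and follows essentially the same route as the paper: Lemma \ref{refl} plus the local CLT (Lemma \ref{LCLT}) for the two-sided estimate on $\P_z(L)$, and a sum of point probabilities over $y\in[z,\sqrt{n-m+1}]$ for the lower bound on $\P_z(U)$. The only (harmless) difference is that for $\P_z(U)$ you lower-bound by the endpoint event $\{Z_{n-m+1}\ge 2z\}$ directly, whereas the paper writes $\P_z(U)=1-\P_z(L)$ via reflection before invoking the same tail estimate; your explicit treatment of $z=0$ and parity is a slight gain in care, not in substance.
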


\begin{proof}
From Lemma \ref{refl},
\[\P_z(L) = \P_z(Z_i>0 \,\,\,\,\forall i\le n-m+1) = \P_0(Z_{n-m+1}\in [-z+1,z]),\]
and by Lemma \ref{LCLT}, this is of order
\[\sum_{i=-z+1}^z \frac{1}{\sqrt{n-m+1}}\exp\Big(-\frac{i^2}{2(n-m+1)}\Big).\]
The first part of the result now follows from the fact that $z\le \sqrt{n-m+1}$. The second part is very similar: using Lemmas \ref{refl} and \ref{LCLT},
\begin{multline*}
\P_z(U) = 1-\P_z(L) = 1-\P_0(Z_{n-m+1}\in [-z+1,z]) \ge \P_0(Z_{n-m+1}\ge z+1)\\
\ge \sum_{y=z+1}^{\lfloor\sqrt{n-m+1}\rfloor}\P_0(Z_{n-m+1} = y) \gtrsim \sum_{y=z+1}^{\lfloor\sqrt{n-m+1}\rfloor} \frac{1}{\sqrt{n-m+1}} \asymp 1
\end{multline*}
and clearly $\P_z(U)\le 1$ so the proof is complete.
\end{proof}

\begin{lem}\label{tribound}
For $z\in[0,l(m,n)]$, we have
\[\P_z\Big(L(m,n)\cap U(m,n,z)\Big) \gtrsim \frac{z}{\sqrt{n-m+1}}.\]
\end{lem}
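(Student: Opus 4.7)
The plan is to apply the strong Markov property at the first hitting time of $2z$. Write $N = n-m+1$, and under $\P_z$ define the hitting times $\tau_0 = \inf\{i\ge 1 : Z_i \le 0\}$ and $\tau_{2z} = \inf\{i\ge 0 : Z_i \ge 2z\}$, so that $L = \{\tau_0 > N\}$ and $U = \{\tau_{2z} \le N\}$. The key inclusion is
\[
L \cap U \supseteq \{\tau_{2z} \le N/2,\, \tau_{2z} < \tau_0\} \cap \{Z_{\tau_{2z}+i}>0 \,\,\forall\, 1\le i\le N-\tau_{2z}\}.
\]
Applying the strong Markov property at $\tau_{2z}$, and using that $N-\tau_{2z}\le N$ so $\P_{2z}(\tau_0 > N - \tau_{2z}) \ge \P_{2z}(\tau_0 > N)$, I would obtain
\[
\P_z(L \cap U) \ge \P_z(\tau_{2z} \le N/2,\, \tau_{2z} < \tau_0) \cdot \P_{2z}(\tau_0 > N).
\]

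For the second factor, Corollary \ref{Lboundcor} applied with starting point $2z$ gives $\P_{2z}(\tau_0 > N) \asymp (2z+1)/\sqrt{N} \gtrsim z/\sqrt{N}$; the hypothesis $2z\le \sqrt{N}$ holds because $z\le l(m,n)\le \sqrt{N}/2$.

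For the first factor I would use gambler's ruin together with a Markov-inequality estimate on the hitting time. Optional stopping applied to the martingale $(Z_i-z)^2 - i$ at time $T := \tau_0 \wedge \tau_{2z}$ yields $\E_z[T]=z^2$, and the standard martingale $Z_i$ gives $\P_z(\tau_{2z} < \tau_0)=1/2$. The reflection $Z_i \mapsto 2z - Z_i$ preserves the law of the walk started at $z$, and swaps $\tau_0$ with $\tau_{2z}$ while fixing $T$; hence $\E_z[T\,;\, \tau_{2z}<\tau_0] = \E_z[T\,;\, \tau_0 < \tau_{2z}] = z^2/2$, so $\E_z[T \mid \tau_{2z}<\tau_0] = z^2$. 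Markov's inequality then gives $\P_z(T > N/2 \mid \tau_{2z}<\tau_0) \le 2z^2/N \le 1/2$ since $z \le \sqrt{N}/2$. Because $T = \tau_{2z}$ on $\{\tau_{2z} < \tau_0\}$, this yields $\P_z(\tau_{2z}\le N/2,\, \tau_{2z}<\tau_0) \ge 1/4$.

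Combining the two bounds gives $\P_z(L\cap U) \gtrsim z/\sqrt{N}$, as required. The only subtle point is that both halves of the argument consume exactly the factor of $1/2$ built into the definition of $l(m,n)$: the Markov bound needs $z^2 \le N/4$, and Corollary \ref{Lboundcor} needs $2z \le \sqrt{N}$. I do not expect any serious obstacle beyond bookkeeping these constants.
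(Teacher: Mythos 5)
Your proof is correct, but it takes a genuinely different route from the paper's. The paper notes that $L$ and $U$ are not monotone functions of the bits $X_i$ for the switch walk, but that the corresponding events for the compass walk $Y$ are both increasing; exchanging $Z$ for the identically distributed $Y$, it applies the FKG inequality \eqref{fkg1} to get $\P_z(L\cap U)\ge\P_z(L)\P_z(U)$ and concludes immediately from Corollary \ref{Lboundcor}. You instead produce the positive correlation by hand through a path decomposition: make the walk hit $2z$ before $0$ and before time $N/2$ (gambler's ruin gives $\P_z(\tau_{2z}<\tau_0)=1/2$, the martingale $(Z_i-z)^2-i$ with optional stopping and your reflection-symmetry observation give $\E_z[T\mid \tau_{2z}<\tau_0]=z^2$, and Markov's inequality with $z^2\le N/4$ yields a uniform constant lower bound $1/4$), then stay positive for the remaining at most $N$ steps, which Corollary \ref{Lboundcor} started from $2z$ bounds below by a constant times $z/\sqrt{N}$ since $2z\le\sqrt{N}$; the strong Markov property is legitimate here because $(Z_n)$ is itself a simple symmetric random walk, a fact the paper uses freely. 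Both arguments consume exactly the factor $1/2$ built into $l(m,n)$. The paper's proof is shorter; yours is more elementary and self-contained, avoiding FKG and the comparison with $Y$ at the cost of some bookkeeping. One tiny remark: the gambler's-ruin and conditional-expectation steps implicitly assume $z\ge1$, but for $z=0$ the claimed bound is trivial, so nothing is lost.
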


\begin{proof}
We would like to use the FKG inequality. Unfortunately, neither $L$ nor $U$ is either increasing or decreasing as a function of $X$. However, if we replace the switch random walk $Z$ with the compass random walk $Y$, setting
\[L' = \big\{\min_{i \le n-m+1} Y_i > 0\big\} \,\,\,\,\text{ and }\,\,\,\, U' = \big\{\max_{i\le n-m+1} Y_i\ge 2z\big\},\]
then $L'$ and $U'$ are both increasing. Thus the FKG inequality \eqref{fkg1} tells us that
\[\P_z(L'\cap U')\ge \P_z(L')\P_z(U')\]
and since $Y$ and $Z$ have the same distribution,
\[\P_z(L\cap U) = \P_z(L'\cap U') \ge \P_z(L')\P_z(U') = \P_z(L)\P_z(U).\]
The result now follows from Corollary \ref{Lboundcor}.
\end{proof}

Substituting the result of Lemma \ref{tribound} into \eqref{Imaster} gives that
\begin{align*}
\mathcal I_m(P_n) &\ge 2\sum_{z=1}^{l(m,n)} \frac{z}{m-1}\P_0(Z_{m-1}=z)\cdot\P_z\Big(\big\{\min_{i \le n-m+1} Z_i > 0\big\}\cap \big\{\max_{i\le m-n+1} Z_i\ge 2z\big\}\Big)\\
&\gtrsim \sum_{z=1}^{l(m,n)} \frac{z}{m-1}\P_0(Z_{m-1}=z) \cdot\frac{z}{\sqrt{n-m+1}}.
\end{align*}
Applying Lemma \ref{LCLT} again tells us that for $z\in[1,l(m,n)]$, we have $\P_0(Z_{m-1}=z)\asymp (m-1)^{-1/2}$; so
\[\mathcal I_m(P_n) \gtrsim \sum_{z=1}^{l(m,n)} \frac{z}{m-1}\cdot \frac{1}{\sqrt{m-1}} \cdot\frac{z}{\sqrt{n-m+1}} \asymp \frac{l(m,n)^3}{(m-1)^{3/2}(n-m+1)^{1/2}}.\]
If $m\le n/2$, then the right-hand side above is of order $n^{-1/2}$, and if $m>n/2$, it is of order $(n-m+1)/n^{3/2}$. In either case this completes the proof of the lower bound in Proposition \ref{influenceprop}.

\subsection{An upper bound on the influences of $P_n$}

We will now bound \eqref{Imaster} from above. This direction is far more involved as we need to consider the entire sum; for the lower bound we could restrict to just the values of $z$ that gave the biggest contribution. We recall the definitions of $L$ and $U$ from \eqref{LandUdef}. As part of our proof we will have to bound several sums of the following form.

\begin{lem}\label{sumsub}
If $c\in\N$ and $r\ge 0$ then
\[\sum_{z=0}^\infty (z+1)^r \exp\Big(-\frac{z^2}{c}\Big) \lesssim c^{(r+1)/2}.\]
\end{lem}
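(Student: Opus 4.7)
The plan is to recognise this as a standard Gaussian moment estimate: the summand $f(z) = (z+1)^r e^{-z^2/c}$ has the same shape as a Gaussian density times a polynomial, and after rescaling by $\sqrt c$ the sum should behave like $c^{(r+1)/2}$ times a constant depending only on $r$.

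First I would pass from the sum to an integral. Since $f$ is not monotone in $z$---it increases on $[0,z^*]$ and decreases on $[z^*,\infty)$ for some $z^*$ depending on $r$ and $c$---the naive comparison $\sum_z f(z) \le \int_0^\infty f(x)\,dx$ is not directly valid. I would handle this either by splitting the sum at $\lceil z^*\rceil$ and using monotonicity on each piece (giving $\sum_z f(z) \le f(z^*) + \int_0^\infty f(x)\,dx$), or more crudely by the bound $\sum_{z=0}^\infty f(z) \le \max_z f(z) + \int_0^\infty f(x)\,dx$, noting that the maximum $f(z^*) \lesssim c^{r/2}$ is already of lower order than the target $c^{(r+1)/2}$ (since $c\ge 1$).

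Next I would perform the substitution $x = u\sqrt c$ in the integral to obtain
\[\int_0^\infty (x+1)^r e^{-x^2/c}\,dx = \sqrt c \int_0^\infty (u\sqrt c + 1)^r e^{-u^2}\,du.\]
Because $c\ge 1$, I would then use the elementary bound $u\sqrt c + 1 \le \sqrt c(u+1)$ to peel off the factor of $c^{r/2}$, giving
\[\sqrt c \int_0^\infty (u\sqrt c + 1)^r e^{-u^2}\,du \le c^{(r+1)/2}\int_0^\infty (u+1)^r e^{-u^2}\,du.\]
The final integral is a finite constant depending only on $r$, by comparison with a Gaussian moment.

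Combining these steps yields $\sum_{z=0}^\infty (z+1)^r e^{-z^2/c} \lesssim c^{(r+1)/2}$, as required. The only mild obstacle is the unimodality issue in the sum-to-integral comparison, and this is resolved by the maximum-term correction described above; everything else is a routine change of variables.
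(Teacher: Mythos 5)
Your argument is correct: the maximum-term (or split-at-the-mode) correction legitimately fixes the sum-to-integral comparison for the unimodal summand, the mode satisfies $z^*\lesssim_r\sqrt c$ so $f(z^*)\lesssim_r c^{r/2}\le c^{(r+1)/2}$, and the substitution $x=u\sqrt c$ together with $u\sqrt c+1\le\sqrt c\,(u+1)$ (valid since $c\ge1$) reduces the integral to $c^{(r+1)/2}\int_0^\infty(u+1)^re^{-u^2}\,\d u$, a finite constant depending only on $r$ — which is all that is needed, since the lemma is only ever applied with $r$ fixed. The paper takes a different, purely discrete route: it sets $C=\lceil\sqrt c\,\rceil$ and partitions the sum into blocks $\{kC,\ldots,(k+1)C-1\}$, bounding each block by $C\,((k+1)C)^r\exp(-k^2C^2/c)\le C^{r+1}(k+1)^re^{-k^2}$, so the whole sum is controlled by $C^{r+1}\sum_k(k+1)^re^{-k^2}\asymp c^{(r+1)/2}$. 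Both proofs implement the same rescaling-by-$\sqrt c$ heuristic; the paper's block decomposition avoids any discussion of monotonicity or of integrals (no unimodality lemma needed, no change of variables), while your integral comparison is slightly more analytic and generalises immediately to non-integer ranges or other unimodal weights, at the cost of the extra boundary-term bookkeeping. Either way the implicit constant depends only on $r$, consistent with the paper's use of $\lesssim$.
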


\begin{proof}
Letting $C = \lceil \sqrt c\rceil$, we have
\begin{align*}
\sum_{z=0}^\infty (z+1)^r \exp\Big(-\frac{z^2}{c}\Big) &= \sum_{k=0}^\infty \sum_{z=kC}^{(k+1)C-1} (z+1)^r \exp\Big(-\frac{z^2}{c}\Big)\\
&\le \sum_{k=0}^\infty C((k+1)C)^r \exp\Big(-\frac{k^2 C^2}{c}\Big)\\
&\le C^{r+1} \sum_{k=0}^\infty (k+1)^r \exp(-k^2) \asymp C^{r+1}.\qedhere
\end{align*}
\end{proof}

Let $M=\lfloor(m-1)^{3/4}\rfloor$. We begin our upper bound on \eqref{Imaster} by splitting the sum depending on whether $z$ is larger or smaller than $M$: from \eqref{Imaster},
\begin{align}
\mathcal I_m(P_n) &= 2 \sum_{z=1}^{M} \frac{z}{m-1} \P_0(Z_{m-1} = z)  \P_z(L \cap U) + 2 \sum_{z=M+1}^{m-1} \frac{z}{m-1} \P_0(Z_{m-1} = z)  \P_z(L \cap U)\nonumber\\
&\le 2 \sum_{z = 1}^M \frac{z}{m-1} \P_0(Z_{m-1} = z)  \P_z(L\cap U) + 2 \sum_{z=M+1}^{m-1} \P_0(Z_{m-1} = z) \P_z(L).\label{twosumsI}
\end{align}
We label the two sums in \eqref{twosumsI} by (\ref{twosumsI}\,i) and (\ref{twosumsI}\,ii).

Addressing the second sum first, we note that $\P_z(L)$ is increasing in $z$, so
\[\text{(\ref{twosumsI}\,ii)} \le 2\P_{m-1}(L) \sum_{z=M+1}^{m-1} \P_0(Z_{m-1} = z) = 2\P_{m-1}(L)\P_0(Z_{m-1}>M).\]
By Lemma \ref{chernoff} with $x=M$, we have
\[\P_0(Z_{m-1}>M) \le \exp(-(m-1)^{1/2}/2).\]
If $m-1 > (n-m+1)^{1/2}$ then we use the trivial bound $\P_{m-1}(L)\le 1$, or if $m-1\le (n-m+1)^{1/2}$ then we apply Corollary \ref{Lboundcor} to obtain
\[\P_{m-1}(L) \asymp \frac{m}{\sqrt{n-m+1}}.\]
Putting these estimates together, we have shown that
\[\text{(\ref{twosumsI}\,ii)}\lesssim \Big(\frac{m}{\sqrt{n-m+1}}\wedge 1\Big)\exp(-(m-1)^{1/2}/2).\]
By considering the two cases $m<\sqrt n$ and $m\ge \sqrt n$ separately, one can check that in either case the above is at most a constant times $(n-m+1)n^{-3/2}$, as required. It thus remains to bound (\ref{twosumsI}\,i).

To do this we split it again depending on whether $z$ exceeds $\lfloor(n-m+1)^{1/2}\rfloor$. If it does not, we bound $\P_z(L\cap U)$ above by $\P_z(L)$ and apply Lemma \ref{LCLT} and Corollary \ref{Lboundcor}. Letting $M' = M\wedge \lfloor(n-m+1)^{1/2}\rfloor$, we obtain
\begin{align}
\sum_{z=1}^{M'} \frac{z}{m-1}\P_0(Z_{m-1}=z)\P_z(L\cap U) &\le \sum_{z=1}^{M'} \frac{z}{m-1} \P_0(Z_{m-1}=z)\P_z(L)\nonumber\\
&\asymp \sum_{z=1}^{M'} \frac{z}{m-1} \frac{1}{(m-1)^{1/2}}e^{-z^2/(2(m-1))} \frac{z+1}{(n-m+1)^{1/2}}.\label{zsmallUB}
\end{align}
If $m\le n/2$, then by Lemma \ref{sumsub},
\[\sum_{z=1}^{M'} z(z+1) e^{-\frac{z^2}{2(m-1)}} \lesssim (m-1)^{3/2},\]
whereas if $m>n/2$, then
\[\sum_{z=1}^{M'} z(z+1) e^{-\frac{z^2}{2(m-1)}} \le \sum_{z=1}^{\lfloor(n-m+1)^{1/2}\rfloor} z(z+1) \asymp (n-m+1)^{3/2}.\]
Applying these two bounds to \eqref{zsmallUB} gives that
\begin{equation}\label{penultimatecase}
\sum_{z=1}^{M'} \frac{z}{m-1}\P_0(Z_{m-1}=z)\P_z(L\cap U) \lesssim \frac{\big((m-1)^{3/2}\wedge (n-m+1)^{3/2}\big)}{(m-1)^{3/2}(n-m+1)^{1/2}} \lesssim \frac{n-m+1}{n^{3/2}},
\end{equation}
as required.

When $z>(n-m+1)^{1/2}$ then we bound $\P_z(L\cap U)$ above by $\P_z(U)$ instead of $\P_z(L)$. Applying Lemma \ref{LCLT}, we have
\[\sum_{z=M'+1}^{M} \frac{z}{m-1}\P_0(Z_{m-1}=z)\P_z(L\cap U) \lesssim \sum_{z=M'+1}^M \frac{z}{m-1} \frac{1}{(m-1)^{1/2}}e^{-z^2/(2(m-1))} \P_z(U),\]
and by Lemmas \ref{refl} and \ref{chernoff},
\begin{align*}
\P_z(U) &= 1-\P_z(Z_i<2z \,\,\,\,\forall i\le n-m+1)\\
&= 1 - \P(Z_{n-m+1}\in[-z+1,z]) \le 2\P(Z_{n-m+1}\ge z) \le 2\exp\Big(-\frac{z^2}{2(n-m+1)}\Big).
\end{align*}
Thus
\begin{equation}\label{onecaseremains}
\sum_{z=M'+1}^{M} \frac{z}{m-1}\P_0(Z_{m-1}=z)\P_z(L\cap U) \le \sum_{z=M'+1}^M \frac{2z}{(m-1)^{3/2}}e^{-z^2/(2(m-1)) - z^2/(2(n-m+1))}.
\end{equation}
If $m>n/2$, then the above is at most
\[\sum_{z=0}^\infty \frac{2z}{(m-1)^{3/2}}e^{- z^2/(2(n-m+1))}\]
and by Lemma \ref{sumsub}, this is of order at most $(n-m+1)/n^{3/2}$. On the other hand, if $m<n/2$ and $M'\le M$, then
\begin{align*}
\eqref{onecaseremains}&\le \sum_{z=\lfloor(n-m+1)^{1/2}\rfloor}^\infty \frac{2z}{(m-1)^{3/2}}\exp\Big(-\frac{z^2}{2(m-1)}\Big)\\
&\lesssim \frac{1}{(m-1)^{3/2}}\exp\Big(-\frac{n-m+1}{2(m-1)}\Big) \sum_{z=0}^\infty z\exp\Big(-\frac{z^2}{2(m-1)}\Big)
\end{align*}
and by Lemma \ref{sumsub}, this is of order at most
\[\frac{1}{(m-1)^{1/2}}\exp\Big(-\frac{n-m+1}{2(m-1)}\Big).\]
Since $e^{-x/2} \le x^{-1/2}$ for all $x>0$, this is bounded above by $(n-m+1)^{-1/2}$. Thus we have shown that when $M'\le M$,
\[\eqref{onecaseremains} \lesssim \frac{n-m+1}{n^{3/2}} \wedge \frac{1}{(n-m+1)^{1/2}} \le \frac{n-m+1}{n^{3/2}},\]
and of course when $M'>M$ the sum is empty and $\eqref{onecaseremains}=0$. Combining this with \eqref{penultimatecase}, we have shown that
\[\text{(\ref{twosumsI}\,i)} \lesssim \frac{n-m+1}{n^{3/2}},\]
which completes the proof of Proposition \ref{influenceprop}.

\section{Proofs of Lemmas \ref{Hdimlower}, \ref{Ttechnicality} and \ref{ergodic}}\label{techlemsec}

To complete our proof of the lower bound on the Hausdorff dimension of $\Ecal$ outlined in Section \ref{outlineproof}, we need several technical lemmas. In this section we prove those results, beginning with Lemma \ref{Hdimlower}, which is based on \cite[Lemma 6.2]{schramm_steif:noise_sens_percolation}.

\begin{proof}[Proof of Lemma \ref{Hdimlower}]
If we let $\mu_n^\alpha$ be the measure on $[0,1]$ given by
\[\mu_n^\alpha(A) = \frac{1}{\P(P_n^\alpha)} \int_A \ind_{P_n^\alpha(t)}\d t,\]
then noting that $\mu_n^\alpha$ is supported on $\bar T_n^\alpha$, \cite[Lemma 6.2]{schramm_steif:noise_sens_percolation} gives a sufficient condition for the Hausdorff dimension of $\bigcap_n \bar T_n^\alpha$ to be at least $\gamma$. This condition is that there exists a finite constant $c$ such that for infinitely many $n$,
\[\mu_n^\alpha([0,1]) \ge 1/c \,\,\,\, \text{ and } \,\,\,\, \int_0^1 \int_0^1 |t-s|^{-\gamma} \d \mu_n^\alpha(s) \d \mu_n^\alpha(t) \le c.\]
In order to prove our lemma it therefore suffices to show that this condition holds with positive probability for $\alpha<1/2$.

We start by bounding $\mu_n^\alpha([0,1])$ from below. By the Paley-Zygmund inequality,
\begin{equation}\label{paleymu}
\P\Big(\mu_n^\alpha([0,1]) \ge \frac{1}{2}\E[\mu_n^\alpha([0,1])]\Big) \ge \frac{\E[\mu_n^\alpha([0,1])]^2}{4\E[\mu_n^\alpha([0,1])^2]}.
\end{equation}
By Fubini's theorem and stationarity,
\[\E[\mu_n^\alpha([0,1])] = \frac{1}{\P(P_n^\alpha)} \int_0^1 \P(P_n^\alpha(t))\d t = \frac{1}{\P(P_n^\alpha)} \int_0^1 \P(P_n^\alpha) \d t = 1.\]
Also, for any $\gamma \in[0,1)$,
\[\E[\mu_n^\alpha([0,1])^2] = \E\Big[\int_0^1 \int_0^1 \ind_{P_n^\alpha(s)}\ind_{P_n^\alpha(t)} \d s \d t\Big] = \E[\Phi_n^\alpha(0)] \le \E[\Phi_n^\alpha(\gamma)].\]
Substituting these estimates into \eqref{paleymu}, we have
\[\P(\mu_n^\alpha([0,1])\ge 1/2) \ge \frac{1}{4\E[\Phi_n^\alpha(\gamma)]}\]
so fixing $\gamma$ to take the value in the statement of the lemma and letting $S = \sup_n \E[\Phi_n^\alpha(\gamma)]$, we have
\[\inf_n \P(\mu_n^\alpha([0,1])\ge 1/2) \ge \frac{1}{4S}.\]

Now note that
\[\Phi_n^\alpha(\gamma) = \int_0^1 \int_0^1 |t-s|^{-\gamma} \d \mu_n^\alpha(s) \d \mu_n^\alpha(t),\]
so the second part of our desired condition requires us to show that $\Phi_n^\alpha(\gamma)\le c$ for some constant $c$ and infinitely many $n$. By Markov's inequality,
\[\sup_n \P(\Phi_n^\alpha(\gamma)> 8S^2) \le \sup_n \frac{\E[\Phi_n^\alpha(\gamma)]}{8S^2} = \frac{1}{8S},\]
and therefore
\[\inf_n \P(\mu_n^\alpha([0,1])\ge 1/2 \text{ and } \Phi_n^\alpha(\gamma) \le 8S^2) \ge \inf_n \P(\mu_n^\alpha([0,1])\ge 1/2) - \sup_n \P(\Phi_n^\alpha(\gamma)> 8S^2) \ge \frac{1}{8S}.\]
By Fatou's lemma we deduce that
\[\P(\mu_n^\alpha([0,1])\ge 1/2 \text{ and } \Phi_n^\alpha(\gamma) \le 8S^2 \text{ for infinitely many } n) \ge \frac{1}{8S},\]
and the proof is complete.
\end{proof}

Our proof of Lemma \ref{Ttechnicality} is based on the equivalent result for percolation by H{\"a}ggstr{\"o}m, Peres and Steif \cite[Lemma 3.2]{haggstrom_peres_steif:dynamical_perco}.

\begin{proof}[Proof of Lemma \ref{Ttechnicality}]
Recall that for each $j$, $(N_j(t), t\ge 0)$ is a Poisson process of rate $1$ that decides when $X_j$ rerandomises. For $i\ge 0$, let $\tau^{(i)}_j = \inf\{t\ge 0 : N_j(t)=i\}$, the time of the $i$th rerandomisation of $X_j$.

Fix $i$ and $j$. Since each step of the random walk evolves (in time) independently, almost surely at time $\tau^{(i)}_j$ the random walk hits both $0$ and $2Z_{j-1}(\tau^{(i)}_j)$ after step $j$; thus for large enough $n$, the random walk hits $0$ before step $n$ regardless of the state of step $j$. The random walk therefore also falls below the line $i\mapsto i^\alpha$ before step $n$ (for large enough $n$), regardless of the state of step $j$. That is, almost surely, $\tau^{(i)}_j \not\in \bar T_n^\alpha \setminus T_n^\alpha$ for all large $n$.

However, since the system only changes when one of the $X_j$ rerandomises, for each $\alpha\ge 0$ and $n\in\N$ we have
\begin{equation}\label{barminus}
\bar T_n^\alpha \setminus T_n^\alpha \subset \{\tau^{(i)}_j : i=0,1,2,\ldots,\,\, j=1,2,\ldots,n\}.
\end{equation}

Thus for each $N$ we have
\[\bigcap_{n\ge N} (\bar T_n^\alpha \setminus T_n^\alpha) = \emptyset\,\,\,\, \text{ almost surely.}\]
However, since the $T_n^\alpha$ are nested,
\[\Big(\bigcap_{n\ge 1}\bar T_n^\alpha\Big) \setminus \Big(\bigcap_{n\ge 1} T_n^\alpha\Big) \subset \bigcup_{N\ge 1} \bigcap_{n\ge N} (\bar T_n^\alpha\setminus T_n^\alpha)\]
so the left-hand side is also empty almost surely, as required.
\end{proof}

Finally, Lemma \ref{ergodic} is a standard application of the ergodic theorem.

\begin{proof}[Proof of Lemma \ref{ergodic}]
To apply the ergodic theorem (see for example \cite[Theorem 24.1]{billingsley:probandmeas} and the surrounding chapter for further details), we should formally construct our probability space. For each $i\in\{0,1,2,\ldots\}$ and $j\in\N$ we take a Bernoulli random variable $B^{(i)}_j$ and an exponential random variable $E^{(i)}_j$ of parameter $1$. We view our space $\Omega$ as the set of sequences $(((B^{(i)}_j, E^{(i)}_j)_{i\ge 0})_{j\ge 1})$, with the product $\sigma$-algebra. We can then define $X_j(t)$ to take the value $B^{(i)}_t$ whenever $\sum_{k<i}E^{(i)}_j \le t < \sum_{k\le i} E^{(i)}_j$. We have the shift map $\theta : \Omega \to \Omega$ which maps $(((B^{(i)}_j, E^{(i)}_j)_{i\ge 0})_{j\ge 1})$ to $(((B^{(i)}_j, E^{(i)}_j)_{i\ge 0})_{j\ge 2})$; in practical terms, $\theta$ deletes $X_1(t)$ and builds our (dynamical) random walks from $(X_2(t),X_3(t),\ldots)$ instead. Standard methods show that $\theta$ is ergodic. Define
\[\Ecal'_\alpha = \Big\{t\in[0,1] : \liminf_{n\to\infty} \frac{-Z_n(t)}{n^\alpha} > 0 \Big\}.\]
For any $\alpha\ge0$, the Hausdorff dimension of $\Ecal_\alpha\cup\Ecal'_\alpha$ is invariant under $\theta$, and therefore constant almost surely by the ergodic theorem. By symmetry, the Hausdorff dimension of $\Ecal_\alpha$ equals that of $\Ecal'_\alpha$. Since the Hausdorff dimension of the union of two sets is the maximum of their Hausdorff dimensions, the Hausdorff dimension of $\Ecal_\alpha$ must therefore equal that of $\Ecal_\alpha\cup\Ecal'_\alpha$, and thus be constant almost surely.
\end{proof}

\section*{Acknowledgements}

MR would like to thank Emily Atkinson, who spent a portion of her summer internship exploring an earlier unsuccessful method to attempt to prove Theorem \ref{NSthm}, and Jon Warren for pointing out the example in \cite{warren:tanaka}. He would also like to thank the Royal Society for funding his University Research Fellowship. MP would like to thank the University of Bath for his University Research Scholarship.

\bibliographystyle{plain}
\def\cprime{$'$}

\end{document}